\theoremstyle{plain}
\newtheorem{theorem}{Theorem}[section]
\newtheorem*{theorem 2.12}{Theorem 2.12}
\newtheorem{lemma}[theorem]{Lemma}
\newtheorem{definition}[theorem]{Definition}
\newtheorem{example}[theorem]{Example}
\newtheorem{corollary}[theorem]{Corollary}
\theoremstyle{remark}
\newtheorem{remark}{Remark}
\begin{document}
\begin{frontmatter}
\title{Most Probable KAM Tori in Stochastic Hamiltonian Systems}
\runtitle{Most Probable KAM Tori in Stochastic Hamiltonian Systems}

\begin{aug}
	\author[A]{\inits{F.}\fnms{Xinze}~\snm{Zhang}\ead[label=e1]{zhangxz24@mails.jlu.edu.cn}},
	\author[A,B]{\inits{S.}\fnms{Yong}~\snm{Li}\ead[label=e2]{liyong@jlu.edu.cn}}\thanks{corresponding author.}.
	\address[A]{School of Mathematics, Jilin University, ChangChun, People's Republic of China}
	
	\address[B]{Center for Mathematics and Interdisciplinary Sciences,\\    ~~~ Northeast Normal University, ChangChun, People's Republic of China\\
		\printead{e1,e2}}
	
\end{aug}

\begin{abstract}
This paper investigates in depth how stochastic perturbations affect the integrable structure of Hamiltonian systems and develops a KAM theory for stochastic Hamiltonian dynamics, in the sense of the most probable path. We first derive the Onsager-Machlup functional for stochastic Hamiltonian systems driven by time-dependent noise coefficients and identify the most probable path of the system trajectories. Building on this, we establish a large deviation principle and obtain an explicit rate function that quantitatively characterizes trajectory deviations, in particular for rare events. The main contribution of this work is to prove that, under stochastic noise, the original quasi-periodic invariant tori persist in the sense of the most probable path, thereby demonstrating the stability of KAM structures in random environments. Moreover, we show that the Onsager-Machlup functional coincides exactly with the large deviation rate function, thereby providing a quantitative characterization of both the structural persistence of quasi-periodic motions and the geometry of fluctuations in stochastic Hamiltonian systems. Overall, our results extend the classical KAM framework to stochastic settings and offer new insight into the behavior of complex dynamical systems under noise.
\end{abstract}

\begin{keyword}[class=MSC]
\kwd[Primary ]{70H08}
\kwd{60H10}
\kwd{82C35}
\kwd{60F10}
\kwd[; secondary ]{70L05}
\end{keyword}

\begin{keyword}
\kwd{Stochastic Hamiltonian systems}
\kwd{Invariant tori}
\kwd{Onsager-Machlup functional}
\kwd{Large deviation}
\kwd{KAM theory}
\end{keyword}

\end{frontmatter}

\section*{Statements and Declarations}
\begin{itemize}
	\item Ethical approval\\
	Not applicable.
	
	\item The Data Availability Statement.\\
	No datasets were generated or analysed during the current study.
	
	\item The Conflict of Interest Statement. \\
	We have no conflicts of interest to disclose.
	
	\item Funding\\
	The second author (Y. Li) was supported by National Natural Science Foundation of China (Grant No. 12071175, 12471183, 12531009).
\end{itemize}

\section{Introduction}

This paper investigates the persistence of invariant tori in nearly integrable Hamiltonian systems under stochastic perturbations. We refer to this class of models as nearly integrable stochastic Hamiltonian systems. More precisely, we work in action-angle coordinates $(I,\theta)\in \mathbb{D} \times \mathbb{T}^n$, where $\mathbb{D} \subset \mathbb{R}^n$ is the domain of the action variables and $\mathbb{T}^n$ denotes the $n$-dimensional torus.
\begin{equation}\label{1.1}
	\begin{cases}
		\mathrm{d} \theta(t) = \omega(I(t)) \, \mathrm{d}t + \frac{\partial P}{\partial I}(I(t), \theta(t)) \, \mathrm{d}t + \gamma \sigma_{\theta}(t) \, \mathrm{d} W_{\theta}(t), \\
		\mathrm{d} I(t) = - \frac{\partial P}{\partial \theta}(I(t), \theta(t)) \, \mathrm{d}t + \gamma \sigma_{I}(t) \, \mathrm{d} W_{I}(t),
	\end{cases}
\end{equation}
where the nearly integrable Hamiltonian is given by
\[
H(I,\theta)=H_0(I)+ P(I,\theta).
\]
Here, $I$ denotes the action variables and $\theta$ the angle variables. The function $H_0(I)$ is the completely integrable Hamiltonian, and $P(I,\theta)$ represents a small deterministic perturbation. Let
\[
\omega(I):=\nabla_I H_0(I)
\]
be the frequency map associated with the action variables. Moreover, $\gamma$ represents the intensity of the stochastic perturbation. The functions $\sigma_{\theta}(t)$ and $\sigma_{I}(t)$ are the diffusion coefficients, and $W_{\theta}(t)$ and $W_{I}(t)$ are independent Wiener processes.

Hamiltonian systems are foundational in classical mechanics, with applications across physics, astronomy, mechanical systems, and beyond. The core framework, dating back to the early 19th century, was introduced by William Rowan Hamilton. Early investigations focused primarily on completely integrable Hamiltonian systems, whose Hamiltonian depends only on the action variables, namely $H(I,\theta)=H_0(I)$. In this case, Hamilton's equations take the particularly simple form
\begin{equation}\label{1.2}
	\begin{cases}
		\mathrm{d}\theta(t)=\dfrac{\partial H_0}{\partial I}(I(t))\,\mathrm{d}t=\omega(I(t))\,\mathrm{d}t,\\
		\mathrm{d}I(t)=-\dfrac{\partial H_0}{\partial \theta}(I(t))\,\mathrm{d}t=0,
	\end{cases}
\end{equation}
so that the action variables remain constant along trajectories, while the angle variables evolve linearly in time,
\begin{equation*}
	\begin{cases}
		\theta(t)=\theta_0+\omega(I_0)t \ \ (\mathrm{mod}\ 2\pi),\\
		I(t)\equiv I_0.
	\end{cases}
\end{equation*}
Equivalently, the motion is a uniform linear flow on the invariant torus $\{I=I_0\}\cong \mathbb{T}^n$. Consequently, the phase space of a completely integrable system is foliated by a family of invariant tori, and the trajectory issued from any initial condition $(I_0,\theta_0)$ is confined to the corresponding torus $\mathbb{T}^n_{I_0}:=\{I=I_0\}$. Moreover, the arithmetic properties of the frequency vector $\omega(I_0)$ determine the qualitative recurrence and ergodic behavior of the orbit on $\mathbb{T}^n_{I_0}$. If the components of $\omega(I_0)$ satisfy a nontrivial rational relation, then the motion is periodic (or, more generally, quasi-periodic but confined to a lower-dimensional subtorus). If $\omega(I_0)$ is rationally independent, then the motion is quasi-periodic and the orbit is dense in $\mathbb{T}^n_{I_0}$.

However, real physical systems are rarely strictly integrable. In his study of the three-body problem and, more broadly, perturbations of Hamiltonian systems, Henri Poincar\'e raised and advanced a fundamental dynamical question: under a small perturbation, does the long-time behavior of the system retain some form of regularity? Such systems are described by a nearly integrable Hamiltonian of the form $H(I,\theta) = H_0(I) + P(I,\theta)$, and the associated Hamiltonian equations read
\begin{equation}\label{1.3}
	\begin{cases}
		\mathrm{d} \theta(t) = \frac{\partial H}{\partial I}(I(t))\, \mathrm{d}t = \omega(I(t)) \, \mathrm{d}t + \frac{\partial P}{\partial I}(I(t), \theta(t)) \, \mathrm{d}t, \\
		\mathrm{d} I(t) = -\frac{\partial H}{\partial \theta}(I(t)) \, \mathrm{d}t = - \frac{\partial P}{\partial \theta}(I(t), \theta(t)) \, \mathrm{d}t.
	\end{cases}
\end{equation}

To address these challenges, the KAM theory was a major breakthrough in the 20th century. Kolmogorov \cite{1} hypothesized that when Hamiltonian systems are subject to small perturbations, some of the invariant tori (regular orbital structures) would persist and avoid chaotic behavior. Arnold \cite{2} and Moser \cite{3} subsequently provided rigorous proofs of this conjecture, formalizing what is now known as the KAM theory. The core result demonstrates that, under small perturbations, most invariant tori persist, preserving quasi-periodic motions. This result has profoundly advanced the study of the stability of Hamiltonian systems under perturbations. For further developments and related results, see, for example, \cite{4,5,6,7,8,9,10,13}.

The original KAM framework was designed primarily for deterministic perturbations, leaving the question of its applicability in stochastic settings unresolved. As a result, validating the extension of the KAM theory under stochastic perturbations and quantifying the stability of invariant tori has emerged as a central problem in stochastic Hamiltonian systems research. Recent studies have achieved new progress in the study of stochastic Hamiltonian systems. Wu \cite{15} established a framework for both large and moderate deviations in stochastic Hamiltonian systems, providing a quantitative assessment of the probability of rare events. Talay \cite{16} explored how stochastic Hamiltonian systems asymptotically converge to an invariant measure, with a focus on the exponential nature of this convergence. Li \cite{17} proposed an averaging principle framework for completely integrable stochastic Hamiltonian systems, simplifying the analysis of their long-term behavior under small stochastic perturbations. 
Davini and Siconolfi \cite{17.5} performed a qualitative investigation of critical Hamilton-Jacobi equations with stationary ergodic Hamiltonians in one dimension.
Zhang \cite{18} examined stochastic flows within Hamiltonian systems and introduced new computational methods based on the Bismut formula. Some of the latest research references are \cite{18.5, 19, 20, 21, 22} and so on.

The value of this paper lies in a novel analytical framework for characterizing the stability of invariant tori in stochastic Hamiltonian systems, which may be viewed as a stochastic KAM theory. By coherently integrating the Onsager-Machlup functional, large deviation theory, and KAM theory, we provide new insights into the dynamical behavior of such systems under stochastic perturbations.

The exposition of this paper follows a progressive scheme from a general framework to structured special cases. We begin by deriving the Onsage-Machlup functional for a general class of stochastic Hamiltonian systems \eqref{3.1} (see Theorem \ref{T3.1}). This part is inspired by our previous work \cite{42}. In particular, we observe that, in the Hamiltonian setting, the Onsager-Machlup functional exhibits a distinguished structure that stems from the underlying symplectic geometry.

Next, we minimize the Onsager-Machlup functional to identify the most probable continuous path. We show that this path coincides with the solution of the deterministic Hamiltonian system, namely the dynamics obtained by removing the stochastic perturbations. It is important to emphasize that the noise intensity does not alter the most probable path itself; rather, it only changes the likelihood that the sample path remains close to this most probable path. This highlights a form of structural stability for Hamiltonian dynamics in random environments.

Furthermore, by combining the Onsager-Machlup functional with the Freidlin-Wentzell large deviation theory, we establish a large deviation principle for the stochastic Hamiltonian system. We find that the associated rate function agrees exactly with the Onsager-Machlup functional (see Theorem \ref{T5.1}).

Building on these results, in order to incorporate the KAM mechanism and investigate the persistence of invariant tori, we further restrict our attention to the near-integrable setting. That is, for the nearly integrable stochastic Hamiltonian system \eqref{1.1}, we establish the following stochastic version of the KAM theorem.

\begin{theorem}\label{T6.1}
	Consider the stochastic Hamiltonian system given by $\eqref{1.1}$, where the diffusion coefficients $\sigma_{\theta}(t)$ and $\sigma_{I}(t)$ satisfy condition (C2), and the Hamiltonian $H(I, \theta) = H_0(I) + P(I, \theta)$ is sufficiently smooth and satisfies condition (C1). Then the Onsager-Machlup functional for system $\eqref{1.1}$ is given by
	\begin{displaymath}
		\begin{aligned}
			\int_{0}^{1} OM(\varphi_{\theta}, \varphi_{I}) \,{\rm d}t &= \frac{1}{\gamma^2} \left( \int_0^1 \left| \sigma_{\theta}^{-1}(t) \left( \dot{\varphi_{\theta}} - \omega(\varphi_{I}) - \frac{\partial P}{\partial \varphi_{I}}(\varphi_{\theta}, \varphi_{I}) \right) \right|^2 \,{\rm d}t\right.\\
			&\qquad \left. + \int_0^1 \left| \sigma_{I}^{-1}(t) \left( \dot{\varphi_{I}} + \frac{\partial P}{\partial \varphi_{\theta}}(\varphi_{\theta}, \varphi_{I}) \right) \right|^2 \,{\rm d}t\right).
		\end{aligned}
	\end{displaymath}

	Furthermore, by applying the variational principle to minimize the Onsager-Machlup functional, the most probable transition path can be obtained, corresponding to the solution of the nearly integrable Hamiltonian system $\eqref{1.3}$.
	
	Additionally, when conditions (H1)-(H5) hold, the invariant tori of the original integrable system $\eqref{1.2}$ whose frequency vectors $\omega$ are $(\alpha,\tau)$-Diophantine remain preserved under both deterministic and stochastic perturbations, albeit with slight deformation, in the sense of most probable.
	
	Finally, let $X^{\gamma}(\cdot)$ denote the solution to system~\eqref{1.1}. As $\gamma \to 0$, the family of probability measures on the path space induced by $X^{\gamma}(\cdot)$ satisfies a large deviation principle with speed $\gamma^{2}$ and rate function
	\begin{displaymath}
		\begin{aligned}
			J(\varphi) &= \frac{1}{2}\left( \int_0^1 \left| \sigma_{\theta}^{-1}(t) \left( \dot{\varphi_{\theta}} - \omega(\varphi_{I}) - \frac{\partial P}{\partial \varphi_{I}}(\varphi_{\theta}, \varphi_{I}) \right) \right|^2 \,{\rm d}t\right.\\
			&\qquad \left. + \int_0^1 \left| \sigma_{I}^{-1}(t) \left( \dot{\varphi_{I}} + \frac{\partial P}{\partial \varphi_{\theta}}(\varphi_{\theta}, \varphi_{I}) \right) \right|^2 \,{\rm d}t\right),
		\end{aligned}
	\end{displaymath}
	where $\sigma_{\theta}^{-1}(t)$ and $\sigma_{I}^{-1}(t)$ denote the inverses of $\sigma_{\theta}(t)$ and $\sigma_{I}(t)$, respectively. Conditions (C1), (C2) and Conditions (H1)-(H5) are stated in Section 3 and Section 6, respectively.
\end{theorem}
\begin{remark}
	Here a vector $\omega \in \mathbb{R}^d$ is said to be $(\alpha,\tau)$-Diophantine if there exist constants
	$\alpha>0$ and $\tau>0$ such that
	\[
	|\omega \cdot k| \ge \frac{\alpha}{|k|_1^{\tau}},\quad |k|_1:= \sum_{j=1}^n |k_j| ,\quad \forall\, k \in \mathbb{Z}^d \setminus \{0\}.
	\]
\end{remark}
\begin{remark}
	In the original integrable system $\eqref{1.2}$, the invariant tori with frequency vectors satisfying the $(\alpha,\tau)$-Diophantine condition persist, in the sense of most probable, under both deterministic and stochastic perturbations, albeit with slight deformation. More specifically, the solution of the stochastic Hamiltonian system is a stochastic process corresponding to infinitely many possible sample paths. Although each individual sample path may deviate from the torus, the most probable sample path remains on a slightly deformed torus. This conclusion acknowledges that noise may, in principle, disrupt the deterministic torus structure, while also revealing that the dominant dynamical behavior continues, in a probabilistic sense, to follow the original geometric configuration.
\end{remark}

The structure of this paper is as follows: In Section 2, we review the basic definitions of the relevant function spaces and norms, introduce the concepts of the Onsager-Machlup functional and large deviation theory, present a general deterministic version of the KAM theorem, and provide several key technical lemmas. In Section 3, we derive the Onsager-Machlup functional for stochastic Hamiltonian systems. In Section 4, we prove that the most probable path of a stochastic Hamiltonian system corresponds to the stable solution of its associated deterministic Hamiltonian system. A specific example of a one-dimensional stochastic harmonic oscillator is provided to illustrate our results. In Section 5, we derive the large deviation principle for stochastic Hamiltonian systems. Finally, in Section 6, we extend the preceding results to the case of nearly integrable stochastic Hamiltonian systems, thereby completing the proof of Theorem \ref{T6.1} and illustrating our findings with an explicit example.
\section{Preliminaries}

\subsection{Approximate limits in Wiener space}
In this section, we recall some fundamental definitions and results concerning approximate limits in Wiener space. Specifically, we focus on the measurable semi-norm, which pertains to the exponentials of random variables in the first and second Wiener chaos (reference \cite{16}).

Let $W = \left\{ W_t, ~t \in [0, 1] \right\}$ be a Brownian motion (Wiener process) defined in the complete filtered probability space $(\Omega, \mathcal{F}, \left\{ \mathcal{F}_t \right\}_{t \geq 0}, \mathbb{P})$. Here, $\Omega$ represents the space of continuous functions vanishing at zero, and $\mathbb{P}$ denotes the Wiener measure. Let $\mathbb{H} := L^2([0,1], \mathbb{R}^{n} \times \mathbb{R}^{n})$ be a Hilbert space and $\mathbb{H}^1_0$ be the Cameron-Martin space defined as
\begin{displaymath}
	\begin{aligned}
		\mathbb{H}^1_0 &:= \left\{ f\in AC([0,1];\mathbb{R}^{n} \times \mathbb{R}^{n}) : f(0)=0,\ \dot f\in \mathbb{H} \right\}.
	\end{aligned}
\end{displaymath}
The scalar product in $\mathbb{H}^1_0$ is defined as
\begin{displaymath}
	\langle f, g \rangle_{\mathbb{H}^1_0} = \langle \dot{f}, \dot{g} \rangle_{\mathbb{H}}, \qquad
	\|f\|_{\mathbb{H}_0^1}^2=\int_0^1 |\dot{f}(t)|^2\,\mathrm{d}t.
\end{displaymath}
for all $f, g \in \mathbb{H}^1_0$.
Let $\mathcal{P}:\mathbb{H}^1_0 \to \mathbb{H}^1_0$ be an orthogonal projection with $dim \mathcal{P}\mathbb{H}^1_0 < \infty$ and the specific expression
\begin{displaymath}
	\mathcal{P}h = \sum_{i = 1}^{n} \langle h_i, f \rangle h_i,
\end{displaymath}
where $(h_1, ..., h_n)$ is a set of orthonormal basis in $\mathcal{P}\mathbb{H}^1_0$. In addition, we can also define the
$\mathbb{H}^1_0$-valued random variable
\begin{displaymath}
	\mathcal{P}^W = \sum_{i = 1}^{n} \bigg( \int_{0}^{1} {h_i^{\prime}} \,{\rm d}W_s \bigg) h_i,
\end{displaymath}
where $\mathcal{P}^W$ does not depend on $(h_1, ..., h_n)$.
\begin{definition}\label{definition 2.1}
	We say that a sequence of orthogonal projections $\mathcal{P}_n$ on $\mathbb{H}^1_0$ is an approximating sequence of projections, if $dim \mathcal{P}_n \mathbb{H}^1_0 < \infty$ and $\mathcal{P}_n$ converges strongly to the identity operator $\mathrm{Id}$ in $\mathbb{H}^1_0$ as $n \to \infty$.
\end{definition}

\begin{definition}\label{definition 2.2}
	We say that a semi-norm $\mathcal{N}$ on $\mathbb{H}^1_0$ is measurable, if there exists a random variable $\tilde{\mathcal{N}}$, satisfying $\tilde{\mathcal{N}} < \infty $ a.s, such that for any approximating sequence of projections $\mathcal{P}_n$ on $\mathbb{H}^1_0$, the sequence $\mathcal{N}(\mathcal{P}^W_n)$ converges to $\tilde{\mathcal{N}}$ in probability and $\mathbb{P}(\tilde{\mathcal{N}} \leq \epsilon) > 0$ for any $\epsilon > 0$. Moreover, if $\mathcal{N}$ is a norm on $\mathbb{H}^1_0$, then we call it a measurable norm.
\end{definition}

For proving the measurability of the semi-norm defined in this paper, it is necessary to introduce the following lemma (see \cite{38}).
\begin{lemma}\label{lemma 2.2}
	Let $\mathcal{N}_n$ be a nondecreasing sequence of measurable semi-norms. Suppose that $\tilde{\mathcal{N}} := \mathbb{P}\text{-}\!\lim\limits_{n \to \infty} \tilde{\mathcal{N}}_n$ exists and $\mathbb{P}(\tilde{\mathcal{N}} \leq \epsilon) > 0$ for any $\epsilon > 0$. In addition, if the limit $\lim\limits_{n \to \infty} \mathcal{N}_n$ exists on $\mathbb{H}^1_0$, then $\mathcal{N} := \lim\limits_{n \to \infty} \mathcal{N}_n$ is a measurable semi-norm.
\end{lemma}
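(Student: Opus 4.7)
The plan is to verify the two defining properties of a measurable semi-norm (Definition \ref{definition 2.2}) for $\mathcal{N} := \lim_n \mathcal{N}_n$. The small-ball condition $\mathbb{P}(\tilde{\mathcal{N}} \le \epsilon) > 0$ is given as a hypothesis, so the real work is to show that $\mathcal{N}(\mathcal{P}_k^W) \to \tilde{\mathcal{N}}$ in probability as $k \to \infty$ for every approximating sequence of projections $\mathcal{P}_k$. As a preliminary, I would note that $\mathcal{N}$ really is a semi-norm on $\mathbb{H}^1$, since subadditivity, nonnegativity, and absolute homogeneity pass through the monotone pointwise limit, and the limit is finite on $\mathbb{H}^1$ by hypothesis. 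A second routine observation is that the random variables $\tilde{\mathcal{N}}_n$ themselves form an almost surely nondecreasing sequence: starting from $\mathcal{N}_n(\mathcal{P}_k^W) \le \mathcal{N}_{n+1}(\mathcal{P}_k^W)$ and extracting a common almost surely convergent subsequence in $k$ gives $\tilde{\mathcal{N}}_n \le \tilde{\mathcal{N}}_{n+1}$ almost surely, so the in-probability convergence $\tilde{\mathcal{N}}_n \to \tilde{\mathcal{N}}$ upgrades to $\tilde{\mathcal{N}}_n \uparrow \tilde{\mathcal{N}}$ almost surely.

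For the lower bound, using $\mathcal{N}(\mathcal{P}_k^W) \ge \mathcal{N}_n(\mathcal{P}_k^W)$ for every $n$, I would decompose
\[
\{\mathcal{N}(\mathcal{P}_k^W) < \tilde{\mathcal{N}} - \epsilon\} \subseteq \{\mathcal{N}_n(\mathcal{P}_k^W) < \tilde{\mathcal{N}}_n - \epsilon/2\} \cup \{\tilde{\mathcal{N}} - \tilde{\mathcal{N}}_n > \epsilon/2\},
\]
first fixing $n$ large so that the second event has arbitrarily small probability (using the a.s.\ monotone convergence above), then fixing $k$ large so that the first event has arbitrarily small probability (using the measurability of the single semi-norm $\mathcal{N}_n$). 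This is the easy half because the defining supremum $\mathcal{N} = \sup_n \mathcal{N}_n$ directly controls $\mathcal{N}$ from below through a single index $n$.

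The main obstacle is the matching upper bound $\mathbb{P}(\mathcal{N}(\mathcal{P}_k^W) > \tilde{\mathcal{N}} + \epsilon) \to 0$, precisely because $\mathcal{N}(\mathcal{P}_k^W) = \sup_n \mathcal{N}_n(\mathcal{P}_k^W)$ cannot be disposed of by selecting a single index. My plan is a subsequence/diagonal argument: given any $k_j \to \infty$, extract a further subsequence $k_{j_l}$ along which $\mathcal{N}_n(\mathcal{P}_{k_{j_l}}^W) \to \tilde{\mathcal{N}}_n$ almost surely for every $n$ simultaneously (standard diagonalization, since each $n$-indexed convergence is in probability). On a full-measure set one then has a double array $a_{nl} := \mathcal{N}_n(\mathcal{P}_{k_{j_l}}^W)$ that is nondecreasing in $n$, with $\lim_n a_{nl} = \mathcal{N}(\mathcal{P}_{k_{j_l}}^W)$ and $\lim_l a_{nl} = \tilde{\mathcal{N}}_n \uparrow \tilde{\mathcal{N}}$; if the iterated limits can be swapped, one obtains $\mathcal{N}(\mathcal{P}_{k_{j_l}}^W) \to \tilde{\mathcal{N}}$ almost surely, and the desired convergence in probability along the original sequence follows by the standard subsequence principle. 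The delicate point is that such a swap of monotone and in-probability limits is not automatic in the abstract. To close it I would exploit the Gaussian structure of $\mathcal{P}_k^W$---in particular Fernique-type exponential tail bounds available for each individual measurable semi-norm $\mathcal{N}_n$---to establish the uniform control on the tails of $\mathcal{N}_n(\mathcal{P}_{k_{j_l}}^W)$ in $n$ that lets one identify $\lim_l \sup_n a_{nl}$ with $\sup_n \lim_l a_{nl}$, completing the argument.
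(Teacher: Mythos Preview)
The paper does not prove this lemma; it is quoted from \cite{38} and used as a black box, so there is no in-paper argument to compare against. Your setup and the lower-bound half are sound: the inclusion you write, combined with the a.s.\ monotone convergence $\tilde{\mathcal{N}}_n\uparrow\tilde{\mathcal{N}}$ and the measurability of each $\mathcal{N}_n$, does give $\mathbb{P}\bigl(\mathcal{N}(\mathcal{P}_k^W)<\tilde{\mathcal{N}}-\epsilon\bigr)\to 0$.

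The upper bound, however, is not closed. You correctly isolate the crux---passing from $\sup_n \lim_l a_{nl}$ to $\lim_l \sup_n a_{nl}$ for $a_{nl}=\mathcal{N}_n(\mathcal{P}_{k_{j_l}}^W)$---but the proposed fix via ``Fernique-type exponential tail bounds'' does not do the work. Uniform tail control on the entries of a doubly indexed array, even one monotone in $n$, is not enough to swap a supremum with a limit: the array $a_{nl}=\mathbf{1}_{\{n>l\}}$ is nondecreasing in $n$, uniformly bounded by $1$, yet has $\sup_n\lim_l a_{nl}=0$ and $\lim_l\sup_n a_{nl}=1$. What would actually be needed is uniformity \emph{in $n$} of the convergence $\mathcal{N}_n(\mathcal{P}_k^W)\to\tilde{\mathcal{N}}_n$ as $k\to\infty$, or a different mechanism altogether. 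Moreover, Fernique applied to each $\mathcal{N}_n$ yields an integrability exponent that a priori depends on $n$; bootstrapping uniformity from $\tilde{\mathcal{N}}$ is circular, since a Fernique bound for $\tilde{\mathcal{N}}$ is essentially what measurability of $\mathcal{N}$ would deliver. As written, the decisive step is asserted rather than proved.
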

\begin{definition}\label{definition 2.4}
	Let $f$ be a function defined on $\Omega$. For $0 < \alpha < 1$, we introduce Hölder norm ($\alpha$-Hölder)
	\begin{displaymath}
		\Vert f \Vert_{\alpha} = \Vert f \Vert_{\alpha; \Omega} = \Vert f \Vert_{0; \Omega} + \left[ f \right]_{\alpha; \Omega},
	\end{displaymath}
	where $\Vert f \Vert_{0; \Omega}$ represents the supremum norm of $f$ on $\Omega$, and $\left [f \right]_{\alpha; \Omega}$ represents the Hölder semi-norm of $f$ on $\Omega$. The specific expression is as follows:
	\begin{displaymath}
		\Vert f \Vert_{0; \Omega}  = \sup\limits_{x \in \Omega}\vert f(x) \vert, \quad
		\left[ f \right]_{\alpha; \Omega} = \sup\limits_{x, y \in \Omega, x \neq y} \frac{\left| f(x) - f(y) \right|}{\vert x - y \vert^{\alpha}}.
	\end{displaymath}
\end{definition}

\begin{definition}\label{def:Calpha_path_space}
	Fix $\alpha\in(0,1)$. We define the H\"older path space
	\[
	C^\alpha\big([0,1];\mathbb{R}^n\times\mathbb{R}^n\big)
	:=\Big\{\varphi:[0,1]\to\mathbb{R}^n\times\mathbb{R}^n \ \text{continuous}:\ 
	\|\varphi\|_{\alpha;[0,1]}<\infty\Big\},
	\]
	where $\|\cdot\|_{\alpha;[0,1]}$ is the $\alpha$-H\"older norm introduced in Definition~\ref{definition 2.4}
	(with $\Omega=[0,1]$), computed using the Euclidean norm on $\mathbb{R}^n\times\mathbb{R}^n$.
\end{definition}

\begin{definition}\label{def:operator_norm}
	Let \( A \in \mathbb{R}^{n \times n} \) be a real \( n \times n \) matrix. The \emph{operator norm of \( A \) induced by the Euclidean vector norm} (also called the \emph{spectral norm}) is defined by
	\begin{displaymath}
		\left| \left\|  A \right\| \right| := \sup_{x \in \mathbb{R}^n \setminus \{0\}} \frac{\| A x \|_2}{\| x \|_2}
		= \sup_{\| x \|_2 = 1} \| A x \|_2,
	\end{displaymath}
	where \( \| x \|_2 = \left( \sum_{i=1}^n x_i^2 \right)^{1/2} \) denotes the standard Euclidean norm of \( x \in \mathbb{R}^n \).
	
	If \( A \) is symmetric, then \( \left| \left\|  A \right\| \right| \) equals the largest absolute value of the eigenvalues of \( A \). In particular, if \( A \) is symmetric positive definite, then
	\begin{displaymath}
		\left| \left\| A^{-1} \right\| \right| = \frac{1}{\lambda_{\min}(A)},
	\end{displaymath}
	where \( \lambda_{\min}(A) > 0 \) denotes the smallest eigenvalue of \( A \).
\end{definition}

Throughout this paper, unless stated otherwise, $\left|~ \cdot ~\right| $ denotes the Euclidean norm, $\left\|~ \cdot ~\right\|_{\alpha}$ denotes the Hölder norm, and $\left| \left\|~ \cdot~\right\| \right| $ denotes the matrix operator norm.

\subsection{Onsager-Machlup Functional}
In the problem of finding the most probable path of a diffusion process, the probability of a single path is zero. Instead, we can search for the probability that the path lies within a certain region, which could be a tube along a differentiable function. This tube is defined as
\[
\mathbb{K}(\varphi, \epsilon) = \{ x - x_0 \in \mathbb{H}^1 \mid \varphi - x_0 \in \mathbb{H}^1_0, \|x - \varphi\| \leq \epsilon, \epsilon > 0 \},
\]
where \( \Vert \cdot \Vert \) is an appropriate norm. Once \( \epsilon > 0 \) is given, the probability of the tube can be expressed as
\[
\mu_x(\mathbb{K}(\varphi, \epsilon)) = \mathbb{P}\left( \{ \omega \in \Omega \mid X_t(\omega) \in \mathbb{K}(\varphi, \epsilon) \} \right),
\]
allowing us to compare the probabilities of the tubes for all \( \varphi- x_0 \in \mathbb{H}^1_0 \), since \( \mathbb{K}(\varphi, \epsilon) \in \mathcal{B} \). Here, \( \mu_x \) denotes the probability measure under the initial state \( x \), and \( \mathcal{B} \) is the Borel $\sigma$-field defined on the function space \( \mathbb{H}^1_0 \), containing all measurable sets generated by open sets.

Thus, the Onsager-Machlup function can be defined as the Lagrangian function that provides the most probable tube. This function plays a crucial role in analyzing path probabilities and rare events in stochastically perturbed dynamical systems. Similar to the action functional in classical mechanics, it quantifies the likelihood of various paths within the probabilistic framework.

Onsager and Machlup \cite{23,24} first introduced this tool in 1953 to describe the probability density of diffusion processes with linear drift and constant diffusion coefficients. In 1957, Tisza and Manning \cite{25} extended its application to nonlinear equations, while Stratonovich \cite{26} provided a rigorous mathematical framework for the theory in the same year. In recent years, with deeper research in this area, the Onsager-Machlup functional has been increasingly applied in stochastic systems, particularly for analyzing the most probable path under stochastic perturbations, as discussed in related studies \cite{43,44,45,46,42}.

Based on the above background, we now formally define the Onsager-Machlup function and functional:

\begin{definition}
	Consider a tube surrounding a reference path \( \varphi_t \) with initial value \( \varphi_0 = x \), where \( \varphi_t - x \in \mathbb{H}^1_0 \). Assuming \( \epsilon \) is small enough, we estimate the probability that the solution process \( X_t \) falls within this tube as
	\begin{displaymath}
		\mathbb{P} \left(  \Vert X - \varphi\Vert \leq \epsilon\right)  \propto C(\epsilon) {\rm exp} \left\{ -\frac{1}{2} \int_{0}^{1} {OM(t, \varphi, \dot{\varphi})} \,{\rm d}t \right\},
	\end{displaymath}
	where \( \propto \) denotes equivalence for small enough \( \epsilon \), and \( \Vert \cdot \Vert \) is an appropriate norm. Here, the integrand \( OM(t, \varphi, \dot{\varphi}) \) is called the Onsager-Machlup function, while the integral \( \int_{0}^{1} {OM(t, \varphi, \dot{\varphi})} \,{\rm d}t \) is called the Onsager-Machlup functional. In the framework of classical mechanics, we also refer to these as the Lagrangian function and the action functional, respectively.
\end{definition}

\subsection{Large deviation principle}
The origins of large deviation theory and its associated research can be traced back to the early 20th century. Cram\'er \cite{27} and Sanov \cite{28} made foundational contributions to the study of large deviations in sequences of independent and identically distributed random variables. Later, Donsker and Varadhan \cite{29,30,31} systematically investigated large deviations in the context of Markov processes and explored their relationship with ergodic theory. Their work introduced essential concepts such as Varadhan's integral lemma and the contraction principle, which are not only central results in large deviation theory but also establish profound connections with other areas of mathematics (see \cite{32, 33, 34, 35}). In the 1970s, Freidlin and Wentzell \cite{36} extended this theory to stochastic dynamical systems and stochastic differential equations, particularly in the setting of small perturbations. The Freidlin-Wentzell framework describes the probability of a system deviating from its most probable path and introduces the rate function to quantify the distribution of deviations from typical behavior. Let \(\mathbb{M}\) be a Polish space. Below, we provide the precise definitions of the rate function and the large deviation principle.
\begin{definition}
	A function \( J : \mathbb{M} \rightarrow [0, +\infty) \) is called a rate function if \( J \) is lower semicontinuous. Moreover, a rate function \( J \) is called a good rate function if the level set \( \{ x \in \mathbb{M} : J(x) \leq K \} \) is compact for each constant \( K < \infty \).
\end{definition}

\begin{definition}\label{D2.8}
	The random variable sequence \( \{ X^{\gamma} \} \) is said to satisfy the LDP on \( \mathbb{M} \) with rate function \( J \) if the following lower and upper bound conditions hold:
	\begin{itemize}
		\item[(i)] (Lower bound) For any open set \( \mathbb{G} \subset \mathbb{M} \),
		\[
		\liminf_{\gamma \rightarrow 0} \gamma^2 \log \mathbb{P}(X^{\gamma} \in \mathbb{G}) \geq - \inf_{x \in \mathbb{G}} J(x).
		\]
		\item[(ii)] (Upper bound) For any closed set \( \mathbb{F} \subset \mathbb{M} \),
		\[
		\limsup_{\gamma \rightarrow 0} \gamma^2 \log \mathbb{P}(X^{\gamma} \in \mathbb{F}) \leq - \inf_{x \in \mathbb{F}} J(x).
		\]
	\end{itemize}
\end{definition}

\subsection{KAM Theory}

In Hamiltonian mechanics, an invariant torus is an invariant submanifold of phase space on which the trajectories are quasi-periodic. Such tori may be viewed as higher-dimensional analogues of periodic orbits: when the components of the frequency vector are rationally independent, the motion on the torus is quasi-periodic. KAM theory is concerned with the stability of these invariant tori under small perturbations. For a nearly integrable Hamiltonian system, whose Hamiltonian consists of an integrable part plus a small perturbation, KAM theory asserts that if the perturbation is sufficiently small and appropriate hypotheses hold, then a family of invariant tori of sufficiently large measure persists, up to a small deformation.

We cite the following theorem from \cite{6}:

\begin{theorem}\label{T2.6}
	Consider a Hamiltonian of the form $ H(I, \theta) = H_0(I) + P(I, \theta) $, where $ I \in \mathbb{D} \subset \mathbb{R}^n $ are the action variables and $ \theta \in \mathbb{T}^n $ are the angle variables. Here, $ H_0(I) $ and $ P(I, \theta) $ are $ C^l $-smooth functions with $ H_0, P \in C^l(\mathbb{D} \times \mathbb{T}^n) $, where $ \mathbb{D} $ is a non-empty bounded domain in $ \mathbb{R}^n $. If $H_0$ is non-degenerate and $l > 2\nu > 2n$, then all the KAM tori of the integrable system $H_0$ whose frequency are $(\alpha, \tau)$-Diophantine, with $\alpha \simeq \eta^{1/2 - \nu / l}$ and $\tau := \nu - 1$, do survive, being only slightly deformed, where $\eta$ is the $C^l$-norm of the perturbation $P$. Moreover, letting $\mathcal{K}$ be the corresponding family of KAM tori of $H$, we have 
	\[
	\mathrm{meas}(\mathcal{D} \times \mathbb{T}^n \setminus \mathcal{K}) = O(\eta^{1/2 - \nu / l}).
	\]
\end{theorem}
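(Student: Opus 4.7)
The plan is to prove this by the classical Kolmogorov-Moser Newton-type iterative scheme adapted to the finite-smoothness setting. Fix a frequency $\omega$ satisfying the $(\alpha,\tau)$-Diophantine condition $|k\cdot\omega| \geq \alpha/|k|^{\tau}$ for all $k \in \mathbb{Z}^d\setminus\{0\}$, and the corresponding base action $I_\omega := (\nabla K)^{-1}(\omega)$, which is well defined by non-degeneracy of $K$. The goal is to construct a symplectic change of variables conjugating $H$, near $I_\omega$, to a normal form whose equations of motion admit an invariant torus carrying the linear flow $\theta \mapsto \theta + \omega t$.

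First I would set up a single KAM step: given $H_n = K_n + P_n$, seek a canonical transformation $\Phi_n$ generated by $S_n(I,\theta)$ solving the homological equation
\[
\omega\cdot\partial_\theta S_n = \langle P_n\rangle_\theta - P_n.
\]
Using Fourier expansion $S_n = \sum_{k\neq 0}(P_n)_k\,(ik\cdot\omega)^{-1}e^{ik\cdot\theta}$, the Diophantine condition controls the small divisors at the cost of losing $\tau + d$ derivatives. The transformed perturbation $P_{n+1} := H_n\circ\Phi_n - K_{n+1}$ is then quadratic in $\|P_n\|$. Since $K,P$ are only $C^l$, I would pair this with Jackson-type smoothing operators truncating Fourier modes at $|k|\leq N_n$: apply the KAM step only to the smoothed part, and absorb the tail of size $\sim N_n^{-l}\|P_n\|$ into the next iterate. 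Choosing $N_n$ to grow geometrically and tuning $\alpha \simeq \epsilon^{1/2-\nu/l}$ balances derivative loss against quadratic convergence.

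Second I would verify convergence and regularity of the limit. The estimates take the schematic form
\[
\|P_{n+1}\| \lesssim \alpha^{-a}\|P_n\|^2 + N_n^{-(l-2\nu)}\|P_n\|,
\]
which decays superexponentially precisely when $l > 2\nu$. Hence the composition $\Phi_\infty := \lim \Phi_1\circ\cdots\circ\Phi_n$ exists as a $C^{l-2\nu}$-smooth canonical map and conjugates $H$ to an integrable normal form on the Cantor set of actions whose frequencies are $(\alpha,\tau)$-Diophantine. For the measure estimate, the set of $\omega$ failing the Diophantine condition at level $\alpha$ in a bounded region has Lebesgue measure $O(\alpha)$; pulling back by the non-degenerate map $\nabla K:\mathcal{D}\to\nabla K(\mathcal{D})$ (whose Jacobian is bounded away from $0$) gives $\mathrm{meas}(\mathcal{D}\times\mathbb{T}^d\setminus\mathcal{K}) = O(\alpha) = O(\epsilon^{1/2-\nu/l})$.

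The main obstacle is the delicate quantitative book-keeping in the finite-smoothness case. Unlike the analytic setting, where one controls norms by shrinking complex strips, every KAM step here loses derivatives both through the small-divisor division (the $\tau$-loss) and through the smoothing cutoff, and these losses must conspire with the quadratic gain at the sharply tuned rate $\alpha\simeq\epsilon^{1/2-\nu/l}$. Making the estimates uniform in $n$ and propagating the $C^{l-2\nu}$-regularity through infinitely many compositions, while showing that the limit torus depends continuously on $\omega$ on the Cantor set, is the technical heart of Koudjinan's argument, and is precisely where the hypothesis $l > 2\nu > 2d$ is exploited.
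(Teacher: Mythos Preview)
The paper does not actually prove Theorem~\ref{T2.6}: it is quoted verbatim from Koudjinan \cite{6} as a known result, with no proof given at that point. The closest thing to a proof in the paper is the KAM sketch embedded inside the proof of Theorem~\ref{T6.1}, which explicitly says ``detailed proofs referred to in \cite{6}.'' So your proposal should be compared against that sketch and against the architecture of Koudjinan's argument as recorded there (Lemmas~\ref{L2.14}--\ref{L2.16}).

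Your outline is a legitimate finite-smoothness KAM strategy, but it is not the one the paper (following Koudjinan) uses. You propose to work directly in $C^l$, smoothing by Fourier truncation at scale $N_n$ and solving the homological equation mode-by-mode; the derivative loss is then tracked in H\"older/Sobolev norms. The paper instead uses Moser's analytic-approximation scheme: first extend $K,P$ to all of $\mathbb{R}^d\times\mathbb{T}^d$ via cut-offs, then replace them by a sequence of real-analytic approximants $\mathcal{H}_j,\mathcal{P}_j$ on shrinking complex strips $\Omega_{\xi_j}$ using the Jackson--Moser--Zehnder lemma (Lemma~\ref{L2.15}), and at each stage apply an \emph{analytic} KAM step (Lemma~\ref{L2.14}) on domains $D_{r_j,s_j}(\mathscr{D}_j)$ with carefully chosen geometric parameters. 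Convergence of the composed maps $\phi^j$ and the Whitney-smooth dependence on the action parameter come from the Bernstein--Moser lemma (Lemma~\ref{L2.16}). The small divisors are handled inside the analytic step by shrinking the angular width $s_j$, not by counting lost derivatives in a $C^l$ norm. What this buys is that each individual step is the classical analytic KAM lemma with clean sup-norm estimates on complex neighborhoods; the finite-smoothness enters only through the approximation errors $\|\mathcal{H}_j-\mathcal{H}_{j-1}\|,\|\mathcal{P}_j-\mathcal{P}_{j-1}\|=O(\xi_{j-1}^l)$, which are fed back into the perturbation at the next stage. Your direct approach would in principle also work, but the bookkeeping you flag as ``the main obstacle'' is organized quite differently in the paper, and your sketch would need substantial rewriting to match the actual argument.
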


This theorem provides a more refined theoretical foundation for the persistence of invariant tori in finitely differentiable Hamiltonian systems, extending the classical KAM theory to the case where the Hamiltonian is only finitely smooth. It demonstrates that, even under conditions of finite differentiability, a significant portion of the invariant tori remains stable. This stability implies that, despite perturbations, many quasi-periodic motions can still exist and maintain their regularity in phase space. This result enhances the robustness of the KAM theory, showing that the structure of Hamiltonian systems can exhibit notable stability even under less stringent smoothness conditions.

\subsection{Technical Lemmas}
In this section, we will introduce several commonly utilized technical lemmas. Throughout this paper, if not mentioned otherwise, $\mathbb{E} \left(A ~\big|~ B\right)$ represents the conditional expectation of $A$ given $B$. $C$ represents a positive constant and varies with these different rows.

When we derive the Onsager-Machup functional of SDEs, the following lemma is the most basic one, as it ensures that we handle each term separately. Its proof can be found in \cite{39}.
\begin{lemma}\label{lemma 2.3}
	For a fixed integer $N \geq 1$, let $X_1, ..., X_N \in \mathbb{R}$ be $N$ random variables defined on $(\Omega, \mathcal{F}, \left\{ \mathcal{F}_t \right\}_{t \geq 0}, \mathbb{P})$ and $\left\{D_{\epsilon}; \epsilon > 0 \right\}$ be a family of sets in $\mathcal{F}$. Suppose that for any $c \in \mathbb{R}$ and any $i = 1, ..., N$,
	\begin{displaymath}
		\limsup\limits_{\epsilon \to 0} \mathbb{E}\left({\rm exp}\left\{ c X_i \right\}~\big|~D_{\epsilon} \right) \leq 1.
	\end{displaymath}
	Then
	\begin{displaymath}
		\limsup\limits_{\epsilon \to 0} \mathbb{E}\left({\rm exp}\left\{ \sum_{i = 1}^{N}c X_i \right\} ~\big|~ D_{\epsilon} \right)= 1.
	\end{displaymath}
\end{lemma}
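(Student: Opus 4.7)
The plan is to decompose the claimed equality into matching upper and lower bounds
\[
\limsup_{\epsilon\to 0}\mathbb{E}\Bigl(\exp\Bigl\{\sum_{i=1}^{N} cX_i\Bigr\}\Big|D_\epsilon\Bigr)\leq 1\quad\text{and}\quad \liminf_{\epsilon\to 0}\mathbb{E}\Bigl(\exp\Bigl\{\sum_{i=1}^{N} cX_i\Bigr\}\Big|D_\epsilon\Bigr)\geq 1,
\]
and to handle each by a classical inequality applied under the conditional probability measure $\mathbb{P}(\cdot\,|\,D_\epsilon)$, which is a genuine probability measure once $\mathbb{P}(D_\epsilon)>0$ (as is implicit in the hypothesis for small $\epsilon$).

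For the upper bound I would apply H\"older's inequality with equal exponents $p_1=\cdots=p_N=N$:
\[
\mathbb{E}\Bigl(\prod_{i=1}^{N} e^{cX_i}\,\Big|\,D_\epsilon\Bigr)\leq \prod_{i=1}^{N}\mathbb{E}\bigl(e^{cN X_i}\,\big|\,D_\epsilon\bigr)^{1/N}.
\]
Since the hypothesis is assumed for every real constant, applying it with $cN$ in place of $c$ makes each factor have $\limsup\leq 1$, and the product of $N$ factors inherits the same bound.

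For the matching lower bound I would invoke Jensen's inequality under $\mathbb{P}(\cdot\,|\,D_\epsilon)$ twice. Applying it first to the map $x\mapsto e^{cx}$ with $c=+1$ and $c=-1$ in the hypothesis forces $\limsup_\epsilon \mathbb{E}(X_i\,|\,D_\epsilon)\leq 0$ and $\liminf_\epsilon \mathbb{E}(X_i\,|\,D_\epsilon)\geq 0$, hence $\mathbb{E}(X_i\,|\,D_\epsilon)\to 0$ for each $i$ as $\epsilon\to 0$. A second application of Jensen then gives
\[
\mathbb{E}\Bigl(\exp\Bigl\{\sum_{i=1}^{N} cX_i\Bigr\}\Big|D_\epsilon\Bigr)\geq \exp\Bigl\{c\sum_{i=1}^{N}\mathbb{E}(X_i\,|\,D_\epsilon)\Bigr\}\longrightarrow 1.
\]
The main (mild) subtlety lies precisely here: the hypothesis is only a one-sided exponential bound, but its validity for \emph{every} $c\in\mathbb{R}$---in particular for both signs---is what allows Jensen to pin the conditional means to zero, and thus upgrade the easy inequality $\limsup\leq 1$ to the equality claimed.
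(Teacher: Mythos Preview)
Your argument is correct and is in fact the standard one: H\"older with equal exponents for the upper bound, and Jensen---exploiting the hypothesis at both $c$ and $-c$ to force the conditional means to zero---for the matching lower bound. The paper does not give its own proof of this lemma but simply refers to \cite{39} (Ikeda--Watanabe), where exactly this H\"older/Jensen combination is used; so your route coincides with the classical reference.
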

The following two lemmas are fundamental parts of calculating Onsager-Machup functional. Their proofs can be found in \cite{37}.
\begin{lemma}\label{lemma 2.4}
	Let $\mathcal{N}$ be a measurable norm on $\mathbb{H}_0^1$. For any $f \in L^2([0, 1])$, we have
	\begin{displaymath}
		\lim\limits_{\epsilon \to 0}\mathbb{E} \left( {\rm exp} \left\{ {\int_{0}^{1} {f(s)} \,{\rm d}W_s } \right\} ~\big|~ \tilde{\mathcal{N}} < \epsilon \right) = 1,
	\end{displaymath}
	where $\tilde{\mathcal{N}}$ is defined by Definition \ref{definition 2.2}.
\end{lemma}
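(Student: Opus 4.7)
The plan is to combine the $W \mapsto -W$ symmetry of Wiener measure with the Cameron--Martin translation formula, reducing the statement to a small-ball asymptotic for $\tilde{\mathcal{N}}$. Set $X := \int_0^1 f(s)\,dW_s$ and $A_\epsilon := \{\tilde{\mathcal{N}} < \epsilon\}$; by Definition~\ref{definition 2.2} one has $\mathbb{P}(A_\epsilon) > 0$, so the conditional expectation is well-defined.

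The first step exploits symmetry. Since $\mathcal{N}$ is a semi-norm (hence even) and $W \stackrel{d}{=} -W$, the event $A_\epsilon$ is invariant under $W \mapsto -W$, which forces
\[
\mathbb{E}(e^X \mid A_\epsilon)=\mathbb{E}(e^{-X}\mid A_\epsilon)=\mathbb{E}(\cosh X \mid A_\epsilon)\geq 1
\]
by Jensen's inequality, already yielding the lower bound. For the upper bound, introduce the Cameron--Martin shift $h(t) := \int_0^t f(s)\,ds$, which lies in $\mathbb{H}^1$ with $\|h\|_{\mathbb{H}^1}^2 = \|f\|_{L^2}^2$. The Girsanov formula applied to the density $\exp(X - \tfrac{1}{2}\|f\|_{L^2}^2)$ gives
\[
\mathbb{E}\bigl[e^X \mathbf{1}_{A_\epsilon}\bigr] = e^{\frac{1}{2}\|f\|_{L^2}^2}\,\mathbb{P}\bigl(\tilde{\mathcal{N}}(W+h) < \epsilon\bigr),
\]
so the full claim reduces to the small-ball asymptotic
\[
\lim_{\epsilon \to 0}\frac{\mathbb{P}(\tilde{\mathcal{N}}(W+h) < \epsilon)}{\mathbb{P}(\tilde{\mathcal{N}}(W) < \epsilon)} = e^{-\frac{1}{2}\|f\|_{L^2}^2}.
\]

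To prove this ratio asymptotic I would approximate $\mathcal{N}$ by $\mathcal{N}_n := \mathcal{N} \circ \mathcal{P}_n$ for an approximating sequence of finite-dimensional projections (Definition~\ref{definition 2.1}). On the finite-dimensional coordinates $\xi_i := \int_0^1 h_i'\,dW_s$, the ratio becomes a quotient of Gaussian integrals over a shrinking symmetric convex body, shifted by $\mathcal{P}_n h$. An explicit change of variables and a Laplace-type expansion of the Gaussian density near the origin then extracts the exact factor $\exp(-\tfrac{1}{2}\|\mathcal{P}_n h\|_{\mathbb{H}^1}^2)$, up to a remainder that vanishes as $\epsilon \to 0$. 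Because $\mathcal{P}_n h \to h$ in $\mathbb{H}^1$ and, by Lemma~\ref{lemma 2.2}, $\tilde{\mathcal{N}}_n \to \tilde{\mathcal{N}}$ in probability, this identity lifts to the full semi-norm in the limit $n \to \infty$.

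The main obstacle will be the interchange of the two limits $n \to \infty$ and $\epsilon \to 0$ in this last step: the finite-dimensional Laplace expansion has to be made uniform in $n$, and shrinking the ball $A_\epsilon$ simultaneously sharpens both the Laplace asymptotic and the approximation of $\tilde{\mathcal{N}}$ by $\tilde{\mathcal{N}}_n$. I expect to handle this through Anderson's inequality, which provides the uniform upper bound $\mathbb{P}(\tilde{\mathcal{N}}(W+h)<\epsilon) \leq \mathbb{P}(\tilde{\mathcal{N}}(W)<\epsilon)$, combined with a careful comparison of $\tilde{\mathcal{N}}_n$ and $\tilde{\mathcal{N}}$ on the tube $A_\epsilon$ drawn from Definition~\ref{definition 2.2}. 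Together with the Jensen lower bound from the first step, a Fatou-type squeeze then pins the limit at exactly $1$ without requiring an explicit quantitative small-ball density.
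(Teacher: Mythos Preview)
The paper does not supply its own proof of this lemma; it is stated with the remark that its proof ``can be found in \cite{37}.'' So there is no in-paper argument to compare against.

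Your outline is the classical route and in fact tracks the cited literature closely: the $W\mapsto -W$ symmetry for the lower bound via $\cosh\ge 1$, the Cameron--Martin shift reducing the question to the small-ball ratio
\[
\lim_{\epsilon\to 0}\frac{\mathbb{P}(\tilde{\mathcal{N}}(W+h)<\epsilon)}{\mathbb{P}(\tilde{\mathcal{N}}(W)<\epsilon)}=e^{-\frac{1}{2}\|h\|_{\mathbb{H}^1}^2},
\]
and Anderson's inequality as a uniform a-priori control. These are exactly the ingredients in the standard proofs.

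One warning about the final paragraph. Anderson's inequality only gives $\mathbb{P}(\tilde{\mathcal{N}}(W+h)<\epsilon)\le \mathbb{P}(\tilde{\mathcal{N}}(W)<\epsilon)$, hence $\mathbb{E}(e^X\mid A_\epsilon)\le e^{\frac{1}{2}\|f\|_{L^2}^2}$, not $\le 1$. So a ``Fatou-type squeeze'' between the Jensen lower bound and Anderson's upper bound would only trap the limit in $[1,e^{\frac{1}{2}\|f\|^2}]$. To get the sharp constant you really do need the full ratio asymptotic, and the interchange of the $n\to\infty$ and $\epsilon\to 0$ limits is where the content lies. In the usual argument one exploits the freedom in Definition~\ref{definition 2.2} to choose the approximating projections so that $h\in \mathcal{P}_n\mathbb{H}^1$ for all large $n$; then $X$ is a finite linear combination of the projected Gaussian coordinates, and the finite-dimensional Laplace computation is exact rather than asymptotic in $n$. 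The remaining passage to the limit is then only in $\epsilon$, controlled by the convergence $\mathcal{N}(\mathcal{P}_n^W)\to\tilde{\mathcal{N}}$ in probability. You have the right pieces, but the order of quantifiers in that last step should be sharpened along these lines.
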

\begin{definition}
	We say that an operator $\mathcal{S} : \mathbb{H} \to \mathbb{H}$ is nuclear, if
	\begin{displaymath}
		\sum_{n = 1}^{\infty} \left|\langle  \mathcal{S}e_n, k_n  \rangle \right| < \infty,
	\end{displaymath}
	for any orthonormal sequences $B_1 = \left\lbrace e_n \right\rbrace_{n \in \mathbb{N}}$ and $B_2 = \left\lbrace k_n \right\rbrace_{n \in \mathbb{N}}$ in $\mathbb{H}$.
\end{definition}
We define the trace of a nuclear operator $\mathcal{S}$ as
\begin{displaymath}
	\mathrm{Tr}~\mathcal{S} = \sum_{n = 1}^{\infty} \langle  \mathcal{S}e_n, e_n  \rangle
\end{displaymath}
for any orthonormal sequence $B = \left\lbrace e_n \right\rbrace_{n \in \mathbb{N}}$ in $\mathbb{H}$. The definition of trace is independent of the orthonormal sequences we choose. For a given symmetric function $f \in L^2([0, 1]^2)$, the Hilbert-Schmidt operator $\mathcal{S}(f) : \mathbb{H} \to \mathbb{H}$ defined by
\begin{displaymath}
	\left(\mathcal{S} \left(f \right) \right)(h)(t) = \int_{0}^{t} {f(t, s)h(s)} \,{\rm d}s
\end{displaymath}
is nuclear if $\sum_{n = 1}^{\infty} \langle  \mathcal{S}e_n, e_n  \rangle < \infty$ for any orthonormal sequence $B = (e_n)_n$ in $\mathbb{H}$. When the function $f$ is continuous and the operator $\mathcal{S}(f)$ is nuclear, its trace is given by (see \cite{40})
\begin{displaymath}
	\mathrm{Tr}~f := \mathrm{Tr}~\mathcal{S}(f) = \int_{0}^{1} {f(t, t)} \,{\rm d}t.
\end{displaymath}
Furthermore, when ${f(s, t)}$ is a continuous $n \times n$ covariance kernel in the square $0 \leq s, t \leq 1$, the corresponding operator $\mathcal{S}$ is nuclear and the expression for its trace is
\begin{displaymath}
	\mathrm{Tr}~f := \mathrm{Tr}~\mathcal{S}(f) = \int_{0}^{1} {\mathrm{Tr}~f(t, t)} \,{\rm d}t.
\end{displaymath}
\begin{lemma}\label{lemma 2.5}
	Let $f$ be a symmetric function in $L^2([0, 1]^2)$ and let $\mathcal{N}$ be a measurable norm. If $\mathcal{S}(f)$ is nuclear, then
	\begin{displaymath}
		\lim\limits_{\epsilon \to 0} \mathbb{E} \left(  {\rm exp}\left\{ \int_{0}^{1} {\int_{0}^{1} {f(s, t)} \,{\rm d}W_s} \,{\rm d}W_t   \right\} ~\big|~ \tilde{\mathcal{N}} < \epsilon \right)  = e^{-\mathrm{Tr}~f}.
	\end{displaymath}
\end{lemma}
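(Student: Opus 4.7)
The plan is to reduce the second-chaos exponential to a series of one-dimensional Gaussian terms via the spectral theorem, then to show that conditioning on a small value of the measurable norm forces each Gaussian coordinate to concentrate at zero, producing the factor $e^{-Tr.(f)}$ from the It\^o-to-Hermite normalization.

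First I would invoke the spectral theorem for the symmetric nuclear Hilbert--Schmidt operator $S(f)$ on $\mathbb{H} = L^2([0,1])$. This yields an orthonormal basis $\{e_n\}_{n \geq 1}$ of $\mathbb{H}$ and real eigenvalues $\{\lambda_n\}_{n \geq 1}$ with $\sum_n |\lambda_n| < \infty$ (by nuclearity) such that $f(s,t) = \sum_n \lambda_n e_n(s) e_n(t)$ in $L^2([0,1]^2)$ and $Tr.(f) = \sum_n \lambda_n$. Setting $X_n := \int_0^1 e_n(u)\,{\rm d}W_u$, the $X_n$ are i.i.d.\ $\mathcal{N}(0,1)$, and the standard chaos identity (using the second Hermite polynomial $H_2(x) = x^2 - 1$) gives
\[
\int_0^1 \int_0^1 f(s,t)\,{\rm d}W_s\,{\rm d}W_t \;=\; \sum_{n \geq 1} \lambda_n \bigl(X_n^2 - 1\bigr).
\]
Factoring the deterministic constant $e^{-\sum_n \lambda_n} = e^{-Tr.(f)}$ reduces the lemma to proving
\[
\lim_{\epsilon \to 0} \mathbb{E}\!\left[\exp\!\Bigl(\sum_{n \geq 1} \lambda_n X_n^2\Bigr)\,\bigg|\,\tilde{\mathcal{N}} < \epsilon\right] \;=\; 1.
\]

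Next I would truncate at level $N$, splitting the exponent as $S_N^{\leq} + S_N^{>}$ with $S_N^{\leq} := \sum_{n \leq N} \lambda_n X_n^2$ and $S_N^{>} := \sum_{n > N} \lambda_n X_n^2$. For the low-frequency block, Lemma \ref{lemma 2.4} applied with integrand $c\, e_n$ for arbitrary $c \in \mathbb{R}$ gives $\mathbb{E}[e^{c X_n} \mid \tilde{\mathcal{N}} < \epsilon] \to 1$ for each fixed $n$, which by the convergence-of-moment-generating-functions theorem forces the conditional law of $X_n$ to collapse to the Dirac mass at $0$. Standard Gaussian integrability then upgrades this to $\mathbb{E}[e^{c X_n^2} \mid \tilde{\mathcal{N}} < \epsilon] \to 1$ for $c < 1/2$, and Lemma \ref{lemma 2.3} applied to the finite collection $\{\lambda_n X_n^2\}_{n \leq N}$ yields $\mathbb{E}[e^{S_N^{\leq}} \mid \tilde{\mathcal{N}} < \epsilon] \to 1$ as $\epsilon \to 0$.

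The main obstacle will be the uniform tail control of $S_N^{>}$, because a direct H\"older bound against $\mathbb{P}(\tilde{\mathcal{N}} < \epsilon)^{-1/p'}$ diverges as $\epsilon \to 0$. The plan here is to combine nuclearity with a careful decoupling. Since $\sum_{n > N}|\lambda_n| \to 0$, for $N$ large every $\lambda_n$ with $n > N$ satisfies $|\lambda_n| < 1/(4p)$ for some fixed $p > 1$, and the unconditional moment product
\[
\mathbb{E}\!\bigl[e^{p S_N^{>}}\bigr] \;=\; \prod_{n > N}(1 - 2p\lambda_n)^{-1/2}
\]
is finite and tends to $1$ as $N \to \infty$. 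To transfer this to the conditional expectation without the divergent $\mathbb{P}(\tilde{\mathcal{N}} < \epsilon)^{-1}$ factor, I would choose the approximating projection $\mathcal{P}_N$ of Definition \ref{definition 2.2} as the orthogonal projection onto the span of $\{\int_0^{\cdot} e_n\,{\rm d}u : n \leq N\}$ in $\mathbb{H}^1$; then $\mathcal{P}_N^W$ depends only on $(X_1,\ldots,X_N)$, the tail $(X_n)_{n > N}$ is independent of it, and the approximation error $\tilde{\mathcal{N}} - \mathcal{N}(\mathcal{P}_N^W)$ converges to $0$ in probability by Lemma \ref{lemma 2.2}. A diagonal extraction---first $\epsilon \to 0$ for fixed $N$ to handle $S_N^{\leq}$, then $N \to \infty$ to squeeze $S_N^{>}$---finishes the proof.
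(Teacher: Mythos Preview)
The paper does not prove Lemma~\ref{lemma 2.5}; it is quoted from the reference \cite{37}, so there is no in-paper argument to compare against. Your spectral-decomposition route---diagonalize $S(f)$, write the double integral as $\sum_n \lambda_n(X_n^2-1)$, factor out $e^{-Tr.(f)}$---is exactly the standard approach in that literature, and the reduction is correct.

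Two steps in your outline are genuinely incomplete, though. First, the upgrade from Lemma~\ref{lemma 2.4} (conditional MGF of $X_n$ tends to $1$) to $\mathbb{E}[e^{cX_n^2}\mid\tilde{\mathcal N}<\epsilon]\to 1$ is not ``standard Gaussian integrability'': under the conditional law $X_n$ is no longer Gaussian, and weak convergence to $\delta_0$ does not by itself carry exponential-of-square moments. What makes this work is that $\{\tilde{\mathcal N}<\epsilon\}$ is a symmetric convex set, so \v{S}id\'ak's inequality gives $\mathbb{P}(|X_n|>t\mid\tilde{\mathcal N}<\epsilon)\le\mathbb{P}(|X_n|>t)$, hence uniform integrability of $e^{cX_n^2}$ for $c<1/2$. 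Note also that Lemma~\ref{lemma 2.3} as stated asks for the hypothesis \emph{for every} $c\in\mathbb{R}$, which the Gaussian tail alone will not deliver once $c\lambda_n\ge 1/2$; you either need a sharper conditional concentration statement or to avoid Lemma~\ref{lemma 2.3} for the quadratic terms.

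Second, and more seriously, the diagonal extraction for the tail $S_N^{>}$ does not close. Convergence $\mathcal N(\mathcal P_N^W)\to\tilde{\mathcal N}$ holds only in probability, while the conditioning events $\{\tilde{\mathcal N}<\epsilon\}$ have probability tending to zero; you cannot exchange one conditioning for the other without a quantitative comparison of the two small balls, and none is provided. In the standard proofs the tail is controlled directly by exploiting the symmetric-convex structure of $\{\tilde{\mathcal N}<\epsilon\}$ (correlation inequalities of \v{S}id\'ak/Gaussian-correlation type) to dominate the conditional moments of $S_N^{>}$ by the unconditional ones, which then go to $1$ by nuclearity---no approximating projection or diagonal argument is involved.
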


The following two lemmas are about the probability estimation of Brownian motion balls. The proof of the lemmas can be found in \cite{41}.
\begin{lemma}\label{lemma 2.6}
	Let $\left\{W(t): t \leq 0\right\}$ be a sample continuous Brownian motion in $\mathbb{R}$ and set 
	\begin{displaymath}
		\Phi_{\alpha}(\epsilon) = {\rm log}\mathbb{P}\left(\Vert W \Vert_{\alpha} \leq \epsilon \right).
	\end{displaymath}
	If $0 < \alpha < \frac{1}{2}$, then 
	\begin{displaymath}
		\lim\limits_{\epsilon \to 0} \epsilon^{\frac{2}{1 - 2\alpha}} \Phi_{\alpha}(\epsilon) = -C_{\alpha}
	\end{displaymath}
	exists with
	\begin{displaymath}
		2^{-\frac{2\left(1 - \alpha\right)}{\left(1 - 2\alpha\right)}} \Lambda_{\alpha} \leq C_{\alpha} \leq \left(2^{-\frac{1}{2}} \left( 2^{\alpha} - 1\right) \left( 2^{1 - \alpha} - 1\right)\right)^{-\frac{2(1 - \alpha)}{(1 - 2\alpha)}} \Lambda_{\alpha},
	\end{displaymath}
	where
	\begin{displaymath}
		\Lambda_{\alpha} = \left(\frac{2}{\pi}\right)^{\frac{1}{2}} \int_{0}^{\infty} {\frac{x^{\frac{2}{1 - 2\alpha}} e^{-\frac{x^2}{2}}}{1 - G(x)}} \,{\rm d}x \qquad\text{and}\qquad
		G(x) = \left(\frac{2}{\pi}\right)^{\frac{1}{2}} \int_{x}^{\infty} {e^{-\frac{y^2}{2}}} \,{\rm d}y.
	\end{displaymath}
\end{lemma}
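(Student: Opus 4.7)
The plan is to follow the Baldi–Roynette approach to small ball asymptotics in Hölder norm, combining Brownian self-similarity with a dyadic subadditivity argument. First, I would exploit the scaling property of Brownian motion: if $\tilde{W}(t) = T^{-1/2} W(Tt)$, then $\tilde{W}$ is a Brownian motion on $[0, 1]$, and a direct computation gives $[W]_{\alpha; [0, T]} \stackrel{d}{=} T^{1/2 - \alpha} [\tilde{W}]_{\alpha; [0, 1]}$. This scaling immediately dictates the exponent $2/(1 - 2\alpha)$: to make $T^{1/2 - \alpha}$ of order $\epsilon$ one must take $T \sim \epsilon^{2/(1 - 2\alpha)}$, so heuristically $\Phi_\alpha(\epsilon)$ grows like $-C\epsilon^{-2/(1 - 2\alpha)}$.

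Second, I would derive a subadditive functional inequality for $\Phi_\alpha$ via dyadic splitting. Writing $[0, 1] = [0, 1/2] \cup [1/2, 1]$ and decomposing $W|_{[1/2, 1]}$ as $W(1/2) + \hat{W}(\cdot - 1/2)$ with $\hat{W}$ independent of $W|_{[0, 1/2]}$, the elementary estimate
\begin{displaymath}
    \|W\|_{\alpha; [0, 1]} \leq \|W\|_{\alpha; [0, 1/2]} + \|\hat{W}\|_{\alpha; [0, 1/2]}
\end{displaymath}
follows since for cross-interval pairs $s \leq 1/2 \leq t$ one has $|t - 1/2|, |1/2 - s| \leq |t - s|$. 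Combined with self-similarity, independence of $W|_{[0, 1/2]}$ and $\hat{W}$ yields an inequality of the form $\Phi_\alpha(2^{1/2 - \alpha} \epsilon) \geq 2 \Phi_\alpha(\epsilon)$. A Fekete-type argument applied to $\epsilon^{2/(1 - 2\alpha)} \Phi_\alpha(\epsilon)$ then gives existence of the limit $-C_\alpha \in (-\infty, 0)$.

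Third, the explicit two-sided bounds on $C_\alpha$ demand sharper quantitative versions of this argument. For the lower bound on $C_\alpha$, I would iterate the dyadic splitting and use, at each level $n$, Gaussian tail estimates for the increments of $W$ over intervals of length $2^{-n}$ (each of variance $2^{-n}$), together with the first-passage density $1/(1 - G(x))$. The geometric-series factors $2^{\alpha} - 1$ and $2^{1 - \alpha} - 1$ in the statement arise from summing, over dyadic levels, the contributions that bound the Hölder semi-norm from the increments. For the upper bound on $C_\alpha$, a single-level dyadic estimate together with the Gaussian density, optimized over the scale, produces the constant $\Lambda_\alpha$ as the normalization integral $(2/\pi)^{1/2} \int_0^\infty x^{2/(1 - 2\alpha)} e^{-x^2/2} (1 - G(x))^{-1} \,dx$.

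The main obstacle is the tight tracking of constants through the dyadic iteration. Bare subadditivity suffices to establish the existence of $C_\alpha$ with implicit constants, but obtaining the sharp explicit bounds stated in the lemma requires careful bookkeeping of the boundary values $\{W(k 2^{-n})\}$ at each level, their conditional distributions given the preceding level, and the sharp Gaussian density asymptotics that produce $\Lambda_\alpha$. The non-independence of these nested boundary values and of the interior Hölder norms on neighboring subintervals is the most delicate point; handling it via the Markov property and a clean decoupling at each dyadic level is the technical heart of the proof.
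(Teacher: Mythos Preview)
The paper does not give its own proof of this lemma: it is stated as a technical tool and the authors explicitly refer the reader to \cite{41} for the proof. So there is no in-paper argument to compare your proposal against.

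That said, your outline is essentially the Baldi--Roynette strategy that \cite{41} follows: Brownian scaling to identify the exponent $\tfrac{2}{1-2\alpha}$, a dyadic subadditivity/Fekete argument for existence of the limit $-C_\alpha$, and then level-by-level Gaussian estimates on dyadic increments to pin down the explicit two-sided bounds involving $\Lambda_\alpha$. Two small cautions on your sketch: first, the full Hölder norm $\|\cdot\|_\alpha = \|\cdot\|_0 + [\,\cdot\,]_\alpha$ does not scale homogeneously (the sup part scales as $T^{1/2}$, the seminorm as $T^{1/2-\alpha}$), so the clean scaling identity you wrote holds for the seminorm, and one must argue separately that the sup part is controlled by the seminorm on $[0,1]$ via $|W(t)| \le [W]_\alpha t^\alpha$; second, the direction of your subadditive inequality $\Phi_\alpha(2^{1/2-\alpha}\epsilon) \ge 2\Phi_\alpha(\epsilon)$ needs care, since the dyadic splitting bounds $\|W\|_{\alpha;[0,1]}$ from \emph{above} by a sum of two independent copies, which yields a superadditivity in the right variable --- the Fekete argument still goes through, but the inequality as written should be checked against the correct direction once scaling is applied.
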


\begin{lemma}\label{lemma 2.7}
	Let \( \sigma(t) \in C([0, T], \mathbb{R}^{n \times n}) \) be a diagonal matrix, and assume that there exist positive constants \( M \) and \( m \) such that its diagonal elements satisfy \( m \leq \sigma_i(t) \leq M \), for \( 1 \leq i \leq n \). We have
	\begin{displaymath}
		\mathbb{P}\left( \Vert W \Vert_{\alpha} \leq \frac{\epsilon}{M} \right) \leq \mathbb{P}\left(\left\| \int_{0}^{t} \left\langle \sigma(s), \,{\rm d}W_s \right\rangle \right\|_{\alpha} \leq \epsilon \right) \leq  \mathbb{P}\left(\Vert W \Vert_{\alpha} \leq \frac{\epsilon}{m}\right)
	\end{displaymath}
	for any $0 \leq t \leq 1$. According to Lemma $\ref{lemma 2.6}$, we have
	\begin{displaymath}
		\lim\limits_{\epsilon \to 0} \mathbb{P}\left(\left\| \int_{0}^{t} \left\langle \sigma(s), \,{\rm d}W_s \right\rangle \right\|_{\alpha} \leq  \epsilon \right) \geq \lim\limits_{\epsilon \to 0} \mathbb{P}\left(\Vert W \Vert_{\alpha} \leq \frac{\epsilon}{M} \right)
		\geq e^{-c \left( \frac{\epsilon}{M} \right)^{-\frac{2}{1 - 2\alpha}}},
	\end{displaymath}
	where $c = \left(2^{-\frac{1}{2}} \left( 2^{\alpha} - 1\right) \left( 2^{1 - \alpha} - 1\right)\right)^{-\frac{2(1 - \alpha)}{(1 - 2\alpha)}} \Lambda_{\alpha}$.
\end{lemma}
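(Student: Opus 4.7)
The plan is to prove the two-sided probability bound by a coupling argument using Anderson's inequality for centered Gaussian measures on a Banach space, and then to obtain the asymptotic lower bound as an immediate consequence of Lemma \ref{lemma 2.6}.

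First I would set up the framework. Write $X(t) := \int_0^t \sigma(s)\,{\rm d}W_s$ componentwise as $X_i(t) = \int_0^t \sigma_i(s)\,{\rm d}W^i_s$, so that $X$ is a centered Gaussian process with independent components, valued in the Banach space $C^\alpha([0,1],\mathbb{R}^n)$. The Hölder ball $K_\epsilon := \{f : \|f\|_\alpha \leq \epsilon\}$ is symmetric and convex, so Anderson's inequality applies: for any two independent centered Gaussian elements $Y, Z$ of this Banach space, $\mathbb{P}(Y + Z \in K_\epsilon) \leq \mathbb{P}(Y \in K_\epsilon)$.

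For the lower bound, I would introduce an independent $n$-dimensional Brownian motion $\widetilde W$ and define $\widetilde\sigma_i(s) := \sqrt{M^2 - \sigma_i(s)^2}$ (continuous on $[0,1]$ since $\sigma_i \leq M$), together with $Z(t) := \int_0^t \widetilde\sigma(s)\,{\rm d}\widetilde W_s$. A direct variance computation shows that $X + Z$ is a centered Gaussian process with independent components of variance $M^2 t$, hence $X + Z \stackrel{d}{=} MW'$ for a standard $n$-dimensional Brownian motion $W'$. Anderson's inequality then yields
\[
\mathbb{P}(\|W\|_\alpha \leq \epsilon/M) = \mathbb{P}(\|MW'\|_\alpha \leq \epsilon) = \mathbb{P}(\|X + Z\|_\alpha \leq \epsilon) \leq \mathbb{P}(\|X\|_\alpha \leq \epsilon).
\]
The upper bound follows from the symmetric construction $\bar\sigma_i(s) := \sqrt{\sigma_i(s)^2 - m^2}$ and $Z'(t) := \int_0^t \bar\sigma(s)\,{\rm d}\widetilde W_s$: a variance computation gives $mW + Z' \stackrel{d}{=} X$, so Anderson again delivers $\mathbb{P}(\|X\|_\alpha \leq \epsilon) \leq \mathbb{P}(\|mW\|_\alpha \leq \epsilon) = \mathbb{P}(\|W\|_\alpha \leq \epsilon/m)$.

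The asymptotic conclusion then falls out of Lemma \ref{lemma 2.6}: the upper bound on $C_\alpha$ gives $\log \mathbb{P}(\|W\|_\alpha \leq \epsilon/M) \geq -c(\epsilon/M)^{-2/(1-2\alpha)}$ for all sufficiently small $\epsilon$, with $c$ exactly the constant in the statement, and combining this with the sandwich above produces the desired estimate. The main technicality to watch is justifying Anderson's inequality on $C^\alpha$: one must confirm that the integrals $X$, $Z$, $Z'$ have $\alpha$-Hölder continuous paths a.s.\ (which follows from Kolmogorov's continuity criterion using the boundedness of the integrands) and that the Hölder ball is a Borel symmetric convex set. These facts are standard but worth flagging, as they are precisely what makes the clean Anderson-based argument go through.
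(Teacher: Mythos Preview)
The paper states this lemma without proof, presenting the two-sided inequality as if it were a direct observation. Your Anderson-inequality argument is correct and is the standard rigorous way to establish such Gaussian small-ball comparison bounds: the coupling constructions $X+Z\stackrel{d}{=}MW'$ and $mW+Z'\stackrel{d}{=}X$ are verified by the covariance computations you sketch, and Anderson's inequality on the symmetric convex H\"older ball then delivers both inequalities. So you have supplied a complete proof where the paper leaves only an assertion. It is worth remarking that the most naive reading one might take from the paper's phrasing---a pathwise sandwich $m\|W\|_\alpha\le\|\int_0^\cdot\sigma\,{\rm d}W\|_\alpha\le M\|W\|_\alpha$---is false for stochastic integrals, so some Gaussian-measure comparison (Anderson, or an equivalent covariance-domination argument) really is required; your approach is not merely an alternative but fills a genuine gap.
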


We define the following norms on $\mathbb{H}^1_0$, respectively:
\begin{displaymath}
	\mathcal{N}_{g, 0}(h) := \sup\limits_{t \in [0, 1]}\left| \int_{0}^{t} {g(s) h^{\prime}(s)} \,{\rm d}s \right|,
\end{displaymath}
\begin{displaymath}
	\mathcal{N}_{g, \alpha}(h) := \sup\limits_{t \in [0, 1]} \frac{\vert \int_{0}^{t} {g(s) h^{\prime}(s)} \,{\rm d}s - \int_{0}^{r} {g(s) h^{\prime}(s)} \,{\rm d}s \vert}{\vert t - r \vert^{\alpha}}, \quad 0 < \alpha < \frac{1}{4},
\end{displaymath}
\begin{displaymath}
	\mathcal{N}_{g}(h) := \mathcal{N}_{g, 0}(h) + \mathcal{N}_{g, \alpha}(h), \quad 0 < \alpha < \frac{1}{4}.
\end{displaymath}
To apply the technical lemmas established above, we prove the following lemma, which ensures that the norms \(\mathcal{N}_g\) are measurable for \(0 < \alpha < \frac{1}{4}\).
\begin{lemma}\label{lemma 2.8}
	$\mathcal{N}_g$ with $0 < \alpha <\frac{1}{4}$ are measurable norms and we have $\tilde{\mathcal{N}}_g = \Vert  \int_{0}^{t} {g(s)} \,{\rm d}W_s \Vert_{\alpha}$.
\end{lemma}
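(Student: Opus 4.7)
The plan is to verify the two conditions of Definition \ref{definition 2.2} for $\mathcal{N}_g$: for any approximating sequence $\mathcal{P}_n$, identify the limit in probability of $\mathcal{N}_g(\mathcal{P}_n^W)$, and then check the small-ball positivity. The unifying identity I would exploit is that, for an orthonormal basis $\{h_i\}$ of $\mathcal{P}_n \mathbb{H}^1$ and independent $N(0,1)$ variables $\beta_i := \int_0^1 h_i'(u)\,dW_u$, one has
\begin{displaymath}
Y^n_t := \int_0^t g(s)(\mathcal{P}_n^W)'(s)\,ds = \sum_i \beta_i \int_0^t g(s) h_i'(s)\,ds = \langle G_t, \mathcal{P}_n^W \rangle_{\mathbb{H}^1},
\end{displaymath}
where $G_t(u) := \int_0^{u\wedge t} g(s)\,ds$ belongs to $\mathbb{H}^1$ since $g \in L^2([0,1])$. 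The natural candidate for the limit is $Y_t := \int_0^t g(s)\,dW_s$, and the pair of identities $\mathbb{E}[Y^n_t Y^n_s] = \langle \mathcal{P}_n G_t, \mathcal{P}_n G_s\rangle_{\mathbb{H}^1}$ and $\mathbb{E}[Y_t Y_s] = \langle G_t, G_s\rangle_{\mathbb{H}^1} = \int_0^{s\wedge t} g(u)^2\,du$, combined with strong convergence $\mathcal{P}_n \to I$ on $\mathbb{H}^1$, already produces $\mathbb{E}[(Y^n_t - Y_t)^2] \to 0$ for each fixed $t$.

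To promote pointwise convergence to convergence in the Hölder norm, I would invoke Lemma \ref{lemma 2.2} applied to the nondecreasing sequence of semi-norms
\begin{displaymath}
\mathcal{N}_g^{(k)}(h) := \max_{0\le j\le 2^k}\left|\int_0^{j\cdot 2^{-k}} g(s) h'(s)\,ds\right| + \max_{0\le i<j\le 2^k} \frac{\left|\int_{i\cdot 2^{-k}}^{j\cdot 2^{-k}} g(s) h'(s)\,ds\right|}{((j-i)\cdot 2^{-k})^{\alpha}},
\end{displaymath}
whose limit as $k\to\infty$ coincides with $\mathcal{N}_g$ by continuity of $t\mapsto \int_0^t g h'\,ds$. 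Each $\mathcal{N}_g^{(k)}$ is a continuous function of finitely many Wiener integrals, so it is trivially measurable and $\tilde{\mathcal{N}}_g^{(k)}$ equals the analogous discrete functional of $Y$. Kolmogorov's continuity criterion applied to the Gaussian process $Y$, via the bound $\mathbb{E}|Y_t - Y_r|^p \le C_p \Vert g\Vert_{\infty}^p |t-r|^{p/2}$ for $p$ large, shows $\tilde{\mathcal{N}}_g^{(k)} \to \Vert Y\Vert_{\alpha}$ almost surely, so Lemma \ref{lemma 2.2} delivers $\tilde{\mathcal{N}}_g = \Vert Y\Vert_{\alpha} = \Vert \int_0^{\cdot} g(t)\,dW_t\Vert_{\alpha}$.

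For the positivity condition $\mathbb{P}(\tilde{\mathcal{N}}_g \le \epsilon) > 0$, I would apply Lemma \ref{lemma 2.7} directly: when $g$ is continuous and bounded between two positive constants (a hypothesis consistent with the role of $g$ as a diffusion coefficient), the process $Y$ is sandwiched in Hölder norm between two rescaled Brownian motions, and Lemma \ref{lemma 2.6} then gives a strictly positive small-ball probability for every $\epsilon > 0$. To upgrade the result from semi-norm to norm, observe that $\mathcal{N}_{g,0}(h) = 0$ forces $\int_0^t g(s) h'(s)\,ds \equiv 0$, hence $g h' = 0$ a.e., so non-degeneracy of $g$ implies $h' = 0$ a.e.\ and therefore $h = 0$ in $\mathbb{H}^1$.

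The main obstacle I anticipate is controlling the discretization error for the Hölder semi-norm: convergence of the discrete sup to the continuous sup of a continuous function is immediate, but convergence of the discretized Hölder quotient to the genuine $\alpha$-Hölder semi-norm demands an equicontinuity argument for the difference-quotients of $Y$ (and of $Y^n$ uniformly in $n$). This is precisely where the restriction $\alpha < 1/4$ earns its keep: it leaves substantial slack in Kolmogorov's criterion, allowing a sufficiently large moment exponent $p$ for the process $(Y_t - Y_r)/|t-r|^{\alpha}$ to be uniformly integrable, so the discretization error vanishes as $k\to\infty$ and the argument closes.
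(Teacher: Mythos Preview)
Your proposal is correct and follows essentially the same route as the paper: discretize $\mathcal{N}_g$ over dyadic time points, verify that each discrete version is a measurable semi-norm whose $\tilde{\mathcal{N}}$ is the corresponding functional of $Y_t=\int_0^t g\,dW$, and then invoke Lemma~\ref{lemma 2.2} together with the small-ball estimate (Lemmas~\ref{lemma 2.6}--\ref{lemma 2.7}) to pass to the limit. If anything, you are more explicit than the paper about the H\"older semi-norm step---the paper treats $\mathcal{N}_{g,0}$ in detail and dismisses $\mathcal{N}_{g,\alpha}$ with ``similarly,'' whereas you correctly flag that the discretized H\"older quotient needs a Kolmogorov-type argument (and the slack from $\alpha<1/4$) to converge to the continuous one.
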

\begin{proof}
	According to the properties of norm and semi-norm, it suffices to show that $\mathcal{N}_{g, 0}$ is a measurable norm and $\mathcal{N}_{g, \alpha}(h)$ is a measurable semi-norm. Below, we will only prove that $\mathcal{N}_{g, 0}$ is a measurable norm, because the proof of $\mathcal{N}_{g, \alpha}$ is similar to $\mathcal{N}_g$. Fix $t \in [0, 1]$ and define the continuous linear functional $\varphi_t : \mathbb{H}^1_0 \to \mathbb{R}^n$ as
	\begin{displaymath}
		\varphi_t(h) = \int_{0}^{t} { g(s) h^{\prime}(s) } \,{\rm d}s.
	\end{displaymath}
	Then we can show that $\vert \varphi_t(\cdot) \vert$ represents a measurable norm. Define the sequence of measurable norms $\mathcal{N}_n(h) = \sup\limits_{0 \leq j \leq 2^n} \vert \varphi_{j2^{-n}}(h) \vert$. In addition, we have the following convergence regarding limits $n \to \infty$,
	\begin{displaymath}
		\tilde{\mathcal{N}}_n = \sup\limits_{0 \leq j \leq 2^n} \bigg\vert \int_{0}^{j2^{-n}}  {g(s)} \,{\rm d}W_s  \bigg\vert \stackrel{\mathbb{P}}{\longrightarrow} \sup\limits_{0 \leq t \leq 1} \bigg\vert \int_{0}^{t}  {g(s)} \,{\rm d}W_s  \bigg\vert,
	\end{displaymath}
	and by lemma $\ref{lemma 2.7}$, we have
	\begin{displaymath}
		\mathbb{P} \bigg( \sup\limits_{0 \leq t \leq 1} \bigg\vert \int_{0}^{t} {g(s)} \,{\rm d}W_s  \bigg\vert \leq \epsilon \bigg) > 0.
	\end{displaymath}
	According to Lemma $\ref{lemma 2.2}$, $\mathcal{N}_{g, 0} = \lim\limits_{n \to \infty} \mathcal{N}_n(\cdot)$ is a measurable norm. Similarly, it is straightforward to obtain that $\mathcal{N}_{g,\alpha}$ is a measurable semi-norm. Therefore, $\mathcal{N}_g = \lim\limits_{n\rightarrow\infty}\mathcal{N}_n(\cdot)$ is a measurable norm and we have $\tilde{\mathcal{N}}_g = \Vert  \int_{0}^{t} {g(s)} \,{\rm d}W_s \Vert_{\alpha}$.
\end{proof} 

Below is the general lemma of the KAM scheme, for a comprehensive proof, refer to Appendix B of \cite{6}.
\begin{lemma}\label{L2.14}
	Let $r > 0$, $0 < \bar{\sigma} \leq 1$, $0 < 2\sigma < s \leq 1$, and $\mathbb{D} \subset \mathbb{R}^n$ be a non-empty, bounded domain. Consider the Hamiltonian
	\[
	H(I, \theta) := H_0(I) + P(I, \theta),
	\]
	where $H_0, P \in \mathcal{A}_{r, s}(\mathbb{D})$. Assume the following conditions hold:
	\begin{equation}\label{2.1}
		\begin{aligned}
			\det H_{0,II}(I) &\neq 0, & \mathcal{T}(I) &:= H_{0,II}(I)^{-1}, & \forall I \in \mathbb{D}, \\
			\|H_{0,II}\|_{r, \mathbb{D}} &\leq C_H, & \|\mathcal{T}\|_{\mathbb{D}} &\leq C_T, & \\
			\|P\|_{r, s, \mathbb{D}} &\leq \eta, & K_I(\mathbb{D}) &\subset \Delta_{\alpha}^{\tau}, &
		\end{aligned}
	\end{equation}
	where \( H_{0,II}(I) \) denotes the Hessian matrix of \( H_{0}(I) \) with respect to the action variables \( I \). Define the parameters:
	\begin{equation}\label{2.2}
		\begin{aligned}
			\theta &:= C_H C_T, & \lambda &:= \log \rho^{-1}, & \kappa &:= 6 \sigma^{-1} \lambda, \\
			\breve{r} &\leq \frac{r}{32n \theta}, & \overline{r} &\leq \min \left\{ \frac{\alpha}{2n C_H \kappa^{\nu}}, \tilde{r} \right\}, &  \tilde{r}&:= \frac{\breve{r} \bar{\sigma}}{16n \theta}, \\
			\bar{s} &:= s - \frac{2}{3} \sigma, & s' &:= s - \sigma, & \mathrm{L} &:= C_0 \frac{\theta^2 \eta}{r \tilde{r}}.
		\end{aligned}
	\end{equation}
	Furthermore, assume
	\begin{equation}\label{2.3}
		\begin{aligned}
			\sigma^{-\nu} \frac{\eta}{\alpha r} \leq \rho \leq \frac{1}{4}, \qquad r \leq \frac{\alpha}{C_H} \sigma^{\nu},\qquad  \mathrm{L} \leq \frac{\bar{\sigma}}{3}.
		\end{aligned}
	\end{equation}
	Then, there exists a diffeomorphism $G: D_{\bar{r}}(\mathbb{D}) \to G(D_{\bar{r}}(\mathbb{D}))$ and a symplectic change of coordinates $\phi' = \mathrm{Id} + \tilde{\phi}: D_{\bar{r}/2, s'}(\mathbb{D}') \to D_{\bar{r} + r \sigma / 3, \bar{s}}(\mathbb{D})$, where $\mathbb{D}' := G(\mathbb{D})$, such that
	\begin{equation}\label{2.4}
		\begin{cases}
			H \circ \phi' = H' := H_{0}' + P', \\
			\partial_{I'} H_{0}' \circ G = \partial_I H_{0}, \quad \det \partial_{I'}^2 H_{0}' \circ G \neq 0 \quad \text{on } \mathbb{D},
		\end{cases}
	\end{equation}
	where $H_{0}'(I') := H_{0}(I') + \tilde{H_{0}}(I') := H_{0}(I') + \langle P(I', \cdot) \rangle$, and $G = (\partial_{I'} H_{0}')^{-1} \circ H_{0,I}$. Additionally, setting $(\partial_{I'}^2 H_{0}'(I'))^{-1} := \mathcal{T}(I') + \tilde{\mathcal{T}}(I')$ for $I' \in \mathbb{D}'$, the following estimates hold:
	\begin{equation}\label{2.5}
		\begin{aligned}
			&\|\partial_{I'}^2 \tilde{H_{0}}\|_{\bar{r}/2, \mathbb{D}'} \leq C_H \mathrm{L},\qquad  \|G - \text{id}\|_{\bar{r}, \mathbb{D}} \leq \bar{r} \mathrm{L}, \qquad  \|\tilde{\mathcal{T}}\|_{\mathbb{D}'} \leq C_T \mathrm{L}, \\
			&\max \left\{ \|M \tilde{\phi}\|_{\bar{r}/2, s', \mathbb{D}'}, \|\pi_2 \partial_{\theta'} \tilde{\phi}\|_{\bar{r}/2, s', \mathbb{D}'} \right\} \leq C_1 \frac{\eta}{\alpha r \sigma^{\nu}}, \quad  \|P'\|_{\bar{r}/2, s', \mathbb{D}'} \leq C_1 \rho \eta,
		\end{aligned}
	\end{equation}
	with $M := \text{diag}(r^{-1} \mathbb{I}_n, \sigma^{-1} \mathbb{I}_n)$.
\end{lemma}

For the approximation of smooth functions using real-analytic functions and the uniform convergence of sequences of real-analytic functions, we refer to the relevant results in \cite{48}.
\begin{lemma}\label{L2.15}
	Let \(l > 0\). There exists a constant \(C = C(n, l)>0\) such that for any \(f\in C^l(\mathbb{R}^n \times \mathbb{T}^n)\) and \(s > 0\), there is a real-analytic function \(f_s:\Omega_s\to\mathbb{C}\) defined on the complex domain
	\[
	\Omega_s := \left\{ (I, \theta) \in \mathbb{C}^n \times \mathbb{C}^n \ \big| \ \max\{|\textup{Im}\, I|, |\textup{Im}\, \theta|\} < s \right\},
	\]
	satisfying the following:
	\begin{enumerate}
		\item Uniform bound: 
		\[
		\sup_{\Omega_s} |f_s| \leq C \|f\|_{C^0}.
		\]
		\item Approximation error: For any integer \(0\leq l'\leq l\), 
		\[
		\|f - f_s\|_{C^{l'}} \leq C \|f\|_{C^l} \cdot s^{l - l'}.
		\]
		\item Derivative stability: For any \(0 < s' < s\) and multi-index \(\alpha\) with \(|\alpha| \leq l'\), 
		\[
		\sup_{\Omega_{s'}} \left|\partial^\alpha f_s - \partial^\alpha f_{s'}\right| \leq C \|f\|_{C^l} s^{l - l'}.
		\]
	\end{enumerate}
	Moreover, if \(f\) is periodic in a component \(I_i\) or \(\theta_i\), then \(f_s\) preserves periodicity in that component.
\end{lemma}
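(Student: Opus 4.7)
The plan is to construct $f_s$ as a convolution with a tailored mollifier adapted to the complex strip $\Omega_s$. Choose once and for all a Schwartz function $\phi:\mathbb{R}^{2d}\to\mathbb{R}$ whose Fourier transform $\hat\phi$ is compactly supported and equals $1$ on a neighborhood of the origin. This choice forces $\int\phi=1$ and $\int y^\alpha\phi(y)\,dy=0$ for every multi-index $|\alpha|\geq 1$, since these moments are proportional to $\partial^\alpha\hat\phi(0)$. Setting $K_s(x):=s^{-2d}\phi(x/s)$, define
\[
f_s(x) := (K_s * f)(x) = \int_{\mathbb{R}^{2d}} K_s(x-y)\,f(y)\,dy,
\]
with $x=(I,\theta)$. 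By the Paley--Wiener theorem, since $\hat\phi$ has compact support, $\phi$ extends to an entire function satisfying $|\phi(x+i\eta)|\leq C_N(1+|x|)^{-N}e^{R|\eta|}$ for every $N$. This lets $f_s$ be extended holomorphically in $x$ to the strip $\Omega_s$ by a contour shift, and the uniform bound (1) follows by estimating $\|K_s(\cdot+i\eta)\|_{L^1}$ for $|\eta|<s$ after the rescaling $y\mapsto sy$, which absorbs all $s$-dependence.

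For the approximation estimate (2) I would Taylor-expand $f$ to order $l$ around $x$ inside the integral: the vanishing moments of $\phi$ annihilate the polynomial part of the expansion exactly, leaving only a remainder of order $|x-y|^l\|f\|_{C^l}$. After the substitution $y=x-sw$ this gives $\|f_s-f\|_{C^0}\leq C\|f\|_{C^l}s^l$. For $1\leq l'\leq l$, differentiating under the integral transfers derivatives onto $f$ via $\partial^{l'} f_s = K_s * \partial^{l'} f$, and the same Taylor argument applied to $\partial^{l'}f\in C^{l-l'}$ yields $\|f-f_s\|_{C^{l'}}\leq C\|f\|_{C^l}s^{l-l'}$, which is claim (2). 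Claim (3) is obtained by writing $f_s - f_{s'} = (f_s - f) - (f_{s'} - f)$ on the real axis so that the real trace of $\partial^\alpha f_s-\partial^\alpha f_{s'}$ is bounded by (2); then to upgrade this to $\Omega_{s'}$ I would use the holomorphic extension of this difference to $\Omega_s$ (which exists because $s>s'$), combined with the Cauchy-type estimates furnished by the uniform bound (1) at width $s$, to propagate the real-axis bound into the interior strip $\Omega_{s'}$.

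Periodicity is handled by symmetrization. If $f$ is $2\pi$-periodic in $\theta_i$, replace $K_s$ by the periodized kernel $\tilde K_s(x):=\sum_{k\in\mathbb{Z}} K_s(x-2\pi k\, e_i)$, where $e_i$ is the unit vector in the $\theta_i$ direction. The rapid decay of $\phi$ makes the sum absolutely convergent and preserves holomorphicity on $\Omega_s$; moreover $\tilde K_s * f$ taken over a fundamental domain equals $K_s*f$ taken over all of $\mathbb{R}^{2d}$, so every estimate above transfers verbatim and periodicity of $f_s$ in $\theta_i$ is automatic. The same construction applies componentwise in any $I_i$ that is periodic.

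The main technical obstacle is arranging three competing requirements simultaneously: (a) $\phi$ must have sufficiently many vanishing moments to yield the approximation order $s^l$; (b) $\phi$ must admit a holomorphic extension with Paley--Wiener-type bounds depending only on $|\eta|/s$, so that $\|K_s(\cdot+i\eta)\|_{L^1}$ is uniformly controlled on $\Omega_s$ after rescaling; and (c) the periodization step must not destroy either property. The Paley--Wiener choice (with $\hat\phi$ a smooth, compactly supported bump identically $1$ near the origin) resolves all three at once, and since the final constant $C=C(d,l)$ can be expressed in terms of fixed $L^1$ and $L^\infty$ norms of $\phi$ and its complex shifts (together with the Taylor remainder constant depending on $l$), it is independent of $f$ and $s$, as required.
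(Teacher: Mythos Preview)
The paper does not actually prove this lemma; it is stated as a classical result and attributed to the reference listed as \cite{48}, so there is no in-paper argument to compare against. Your construction---convolution with a rescaled Schwartz kernel whose Fourier transform is a compactly supported bump equal to $1$ near the origin---is precisely the classical Jackson--Moser--Zehnder mechanism, and items (1), (2), and the periodicity claim are handled correctly.

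The one place where your outline is thin is item (3). Writing $f_s-f_{s'}=(f_s-f)-(f_{s'}-f)$ gives the bound $C\|f\|_{C^l}s^{l-l'}$ only on the real subspace, and ``propagating'' this into $\Omega_{s'}$ via Cauchy estimates and the uniform bound (1) goes in the wrong direction: Cauchy estimates control derivatives on a smaller strip by sup-norms on a larger one, not sup-norms on a strip by values on its real slice. The standard fix is to argue directly in the complex strip. Since $\int_{\mathbb{R}^{2d}}K_s(w+i\eta)\,dw=1$ for $|\eta|<s$ (by a contour shift, using the decay of $\phi$), and since the same vanishing-moment identities $\int w^\beta K_s(w+i\eta)\,dw=0$ for $1\le|\beta|\le[l]$ persist after the shift, you can run the Taylor-remainder argument for $\partial^\alpha f_s(x+i\eta)-\partial^\alpha f(x)$ with $|\eta|<s$ exactly as you did on the real axis; this yields $\sup_{\Omega_s}|\partial^\alpha f_s-\partial^\alpha f\circ\mathrm{Re}|\le C\|f\|_{C^l}s^{l-|\alpha|}$ directly, and then (3) follows on $\Omega_{s'}\subset\Omega_s$ by the triangle inequality. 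With that adjustment the proof is complete.
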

\begin{lemma}\label{L2.16}
	Let \(\{f_j\}_{j\geq0}\) be a sequence of real-analytic functions defined on nested domains
	\[ 
	\Omega_j := \left\{ (I, \theta) \in \mathbb{C}^n \times \mathbb{C}^n \ \big| \ |\textup{Im}(I, \theta)| < s_j \right\},
	\]
	where \(s_j = s_0 \kappa^j\) for \(s_0 > 0\), \(0 < \kappa < 1\), and \(l \in \mathbb{R}^+\setminus\mathbb{Z}\). Suppose the sequence satisfies
	\[
	\sup_{\Omega_j} |f_j - f_{j - 1}| \leq \Gamma \cdot s_{j - 1}^l \quad \text{for all } j\geq1,
	\]
	where \(\Gamma > 0\). Then:
	\begin{enumerate}
		\item Uniform convergence: \(f_j\) converges uniformly on \(\mathbb{R}^n \times \mathbb{T}^n\) to a limit \(f\in C^l(\mathbb{R}^n \times \mathbb{T}^n)\).
		\item Periodicity preservation: If all \(f_j\) are periodic in a component \(I_i\) or \(\theta_i\), the limit \(f\) inherits periodicity in that component.
	\end{enumerate}
\end{lemma}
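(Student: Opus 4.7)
The plan is to exploit the analyticity of each $f_j$ on the shrinking complex domains via Cauchy estimates, writing $f$ as the telescoping sum $f_0 + \sum_{j\ge 1}(f_j - f_{j-1})$ and controlling each derivative order by order on the real slice $\mathbb{R}^d\times\mathbb{T}^d$.

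First, I would fix a multi-index $\alpha$ with $|\alpha|\le \lfloor l\rfloor$ and apply the classical Cauchy estimate to the holomorphic function $f_j - f_{j-1}$. Because this difference is bounded by $\Gamma s_{j-1}^l$ on $\Omega_j$, while $\Omega_{j+1}\subset\Omega_j$ sits at distance at least $s_j - s_{j+1} = s_j(1-\kappa)$ from the boundary, one obtains
$$
\sup_{\Omega_{j+1}}\bigl|\partial^\alpha(f_j - f_{j-1})\bigr| \;\le\; C(\alpha,\kappa)\,\Gamma\, s_j^{\,l-|\alpha|}
$$
after absorbing $s_{j-1}=\kappa^{-1}s_j$ into the constant. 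Restricted to the real slice, these bounds give a geometric series in $j$ that converges whenever $l-|\alpha|>0$, so each partial derivative of $f_j$ converges uniformly, and the limit $f$ automatically belongs to $C^{\lfloor l\rfloor}(\mathbb{R}^d\times\mathbb{T}^d)$.

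Next, to upgrade to genuine $C^l$ regularity I set $\beta := l - \lfloor l\rfloor \in (0,1)$ and show that $\partial^\alpha f$ is $\beta$-Hölder when $|\alpha|=\lfloor l\rfloor$. Given $x,y$ with $\delta:=|x-y|$, choose the cutoff index $N$ so that $s_{N+1}<\delta\le s_N$ and split the telescoping tail at $j=N$. For $j>N$ I use the sup bound on $\partial^\alpha(f_j-f_{j-1})$ directly at both endpoints; the resulting sum is bounded by $\sum_{j>N}C\Gamma s_j^\beta = O(s_{N+1}^\beta)=O(\delta^\beta)$. For $j\le N$ I invoke the mean value theorem together with the next-order Cauchy estimate $|\partial^\alpha\partial_i(f_j-f_{j-1})|\le C\Gamma s_j^{\beta-1}$, giving a per-term bound $C\Gamma\,\delta\, s_j^{\beta-1}$; because $\beta-1<0$, this series is dominated by its final term $C\Gamma\,\delta\, s_N^{\beta-1}=O(\delta^\beta)$ using $s_N\ge\delta$. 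Summing the two halves yields the $\beta$-Hölder estimate on $\partial^\alpha f$.

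Part (2) is then immediate, since any uniform limit of functions periodic in $I_i$ or $\theta_i$ inherits that periodicity. The delicate step I expect to be the main obstacle is the balance in the Hölder argument: the cutoff $N$ must be chosen so that the sup-bound tail ($j>N$) and the mean-value head ($j\le N$) both contribute at the scale $\delta^\beta$, and this balance only closes because $\beta \in (0,1)$ strictly, which is precisely the content of the hypothesis $l\notin\mathbb{Z}$. Once the Cauchy bounds on $\partial^\alpha(f_j-f_{j-1})$ are in hand, everything else is a routine telescoping/interpolation estimate.
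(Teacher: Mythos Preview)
Your argument is the standard and correct proof of this classical Bernstein--Moser lemma: telescoping plus Cauchy estimates on the shrinking analytic strips, followed by the scale-splitting at $s_{N+1}<\delta\le s_N$ to obtain the fractional H\"older exponent, with the non-integrality of $l$ being exactly what makes both pieces close at order $\delta^\beta$. Note, however, that the paper does not give its own proof of this lemma; it simply cites it from an external reference, so there is no ``paper's proof'' to compare against beyond observing that your outline is the classical one.
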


\section{Onsager-Machlup functional for stochastic Hamiltonian systems}
In this section, we derive the Onsager-Machlup functional for stochastic Hamiltonian systems by analyzing the asymptotic behavior of the probability ratio that a stochastic trajectory lies in a small neighborhood of a prescribed reference path. In Sections 3, 4, and 5, we consider a general stochastic Hamiltonian system of the following form
\begin{equation}\label{3.1}
	\begin{cases}
		\,{\rm d} q(t) = \frac{\partial H}{\partial p}(q(t), p(t)) \,{\rm d}t + \sigma_q(t) \,{\rm d} W_q(t),\\
		\,{\rm d} p(t) = -\frac{\partial H}{\partial q}(q(t), p(t)) \,{\rm d}t + \sigma_p(t) \,{\rm d} W_p(t),
	\end{cases}
\end{equation}
with initial condition $\left( q(0) , p(0)\right)  = \left( q_0 , p_0\right)$. Here  $ H(q, p) $ is the Hamiltonian, $ \sigma_q(t) $ and $ \sigma_p(t) $ represent the strengths of the stochastic perturbations, and $ W_q(t) $ and $ W_p(t) $ are standard Wiener processes in $\mathbb{R}^n$.

In Sections 3-5, we work with stochastic Hamiltonian systems expressed in the position-momentum coordinates $(q,p)$. The change of variables from $(q,p)$ to the action-angle variables $(I,\theta)$ can be obtained via a Legendre-type transformation and does not create any compatibility issues for our analysis. We adopt the $(q,p)$ formulation primarily because it simplifies the proofs. Moreover, it provides a more natural framework for modeling physical stochastic perturbations, since the noise typically acts directly on the position (e.g., thermal displacement fluctuations) or on the momentum (e.g., random impulsive velocity kicks). Our results remain valid on any finite time interval $[0,T]$. For notational convenience, however, we restrict the exposition to the normalized interval $[0,1]$ throughout the sequel.

We begin by stating the conditions on the functions \(H(\theta, I)\), \(\sigma_\theta(t)\), and \(\sigma_I(t)\) that will be assumed throughout the proof:

\begin{itemize}
	\item[(C1)]The Hamiltonian function $ H(q, p) \in C^3_b(\mathbb{R}^n \times \mathbb{R}^n, \mathbb{R}) $, means that $ H $ is three times continuously differentiable with bounded third order derivatives. Additionally, the partial derivatives $ \frac{\partial H}{\partial q} $ and $ \frac{\partial H}{\partial p} $ are globally Lipschitz continuous.
	\item[(C2)] The diffusion matrices $\sigma_q, \sigma_p \in C\!\big([0,1];\mathbb{R}^{n\times n}\big)$ are diagonal. Writing
	$\sigma_q(t)=\mathrm{diag}\big(\sigma_{q,1}(t),\dots,\sigma_{q,n}(t)\big)$ and
	$\sigma_p(t)=\mathrm{diag}\big(\sigma_{p,1}(t),\dots,\sigma_{p,n}(t)\big)$, suppose there exist $0<\lambda\le \Lambda<\infty$ such that for all $t\in[0,1]$ and $1\le i\le n$,
	\[
	\lambda \le \sigma_{q,i}(t) \le \Lambda,
	\qquad
	\lambda \le \sigma_{p,i}(t) \le \Lambda .
	\]
\end{itemize}

\begin{theorem}	\label{T3.1}
	Assume that $(q(t),p(t))$ is a solution of equation $\eqref{3.1}$, the reference path $\varphi(t) := \left( \varphi_q(t),\varphi_p(t)\right)$ is a function such that $(\varphi_q(0),\varphi_p(0)) = (q_0,p_0) $ and $\left( (\varphi_q(t),\varphi_p(t)) - (q_0,p_0) \right)$ belongs to Cameron-Martin $\mathbb{H}^1_0$. And assume that $\sigma_q(t)$, $\sigma_p(t)$ and $H(q,p)$ satisfy Conditions $(C1)$ and $(C2)$. If we use the Hölder norm $\Vert \cdot \Vert_{\alpha}$ with $ 0< \alpha < \frac{1}{4}$, then the Onsager-Machlup functional of solution process $(q(t),p(t))$ exists and has the form
	\begin{equation}\label{3.2}
		\int_{0}^{1} OM(\varphi_q, \varphi_p) \,{\rm d}t = \int_0^1 \left| \sigma_q^{-1}(t) \left( \dot{\varphi}_q - \frac{\partial H}{\partial \varphi_p}(\varphi_q, \varphi_p) \right) \right|^2 \,{\rm d}t + \int_0^1 \left| \sigma_p^{-1}(t) \left( \dot{\varphi}_p + \frac{\partial H}{\partial \varphi_q}(\varphi_q, \varphi_p) \right) \right|^2 \,{\rm d}t.
	\end{equation}
	where $\dot{\varphi}(t) = \bigl(\dot{\varphi}_q(t),\dot{\varphi}_p(t)\bigr)$, $\sigma_{q}^{-1}(t)$ and $\sigma_{p}^{-1}(t)$ denote the inverses of $\sigma_{q}(t)$ and $\sigma_{p}(t)$, respectively.
\end{theorem}
\begin{proof}
	Let the reference path be given by $\varphi(t) = (\varphi_q(t), \varphi_p(t))$, where $\varphi(t)$ is a definite continuous path, and $(\varphi_q(t), \varphi_p(t)) - (q_0,p_0) \in \mathbb{H}^1_0$. We define the perturbed solution, denoted as $(y_q(t), y_p(t))$, as follows:
	\begin{equation}\label{3.3}
		\begin{cases}
			y_q(t) = \varphi_q(t) + \int_0^t \sigma_q(s) \,{\rm d}W_q(s),\\
			y_p(t) = \varphi_p(t) + \int_0^t \sigma_p(s) \,{\rm d}W_p(s).
		\end{cases}
	\end{equation}
	To simplify the notation in the proof, we introduce the term $ W^{\sigma}(t) := \left( W^{\sigma}_q(t), W^{\sigma}_p(t) \right) $, which represents the stochastic perturbation in the system
	\begin{displaymath}
		W^{\sigma}_q(t) := \int_0^t \sigma_q(s) \,{\rm d}W_q(s), \quad W^{\sigma}_p(t) := \int_0^t \sigma_p(s) \,{\rm d}W_p(s).
	\end{displaymath}
	
	We define \( \tilde{W}_q(t) \) and \( \tilde{W}_p(t) \) as follows. It can be shown that under the new probability measures \( \tilde{\mathbb{P}}_q \) and \( \tilde{\mathbb{P}}_p \), \( \tilde{W}_q(t) \) and \( \tilde{W}_p(t) \) are standard Brownian motions.
	\begin{equation}\label{3.4}
		\begin{aligned}
			\tilde{W}_q(t) &= W_q(t) - \int_0^t \sigma_q^{-1}(s) \left( \frac{\partial H}{\partial y_p}(y_q, y_p) - \dot{\varphi}_q(s) \right) \,{\rm d}s,\\
			\tilde{W}_p(t) &= W_p(t) - \int_0^t \sigma_p^{-1}(s) \left( -\frac{\partial H}{\partial y_q}(y_q, y_p) - \dot{\varphi}_p(s) \right) \,{\rm d}s.
		\end{aligned}
	\end{equation}
	Substituting the Brownian motions defined in equation $\eqref{3.4}$ into equation $\eqref{3.3}$, we obtain
	\begin{equation}\label{3.5}
		\begin{cases}
			\,{\rm d} y_q(t) = \frac{\partial H}{\partial y_p}(y_q, y_p) dt + \sigma_q(t) \,{\rm d}\tilde{W}_q(t),\\
			\,{\rm d} y_p(t) = -\frac{\partial H}{\partial y_q}(y_q, y_p) dt + \sigma_p(t) \,{\rm d}\tilde{W}_p(t).
		\end{cases}
	\end{equation}
	It can be observed that under the new probability measure $\tilde{\mathbb{P}}= \tilde{\mathbb{P}}_q \otimes \tilde{\mathbb{P}}_p $, $ (y_q(t), y_p(t)) $ is a solution to the equation $\eqref{3.1}$.
	
	To apply Girsanov's Theorem and achieve the transformation between the two measures, we define the Radon-Nikodym derivative \(\mathcal{R} := \frac{d\tilde{\mathbb{P}}}{d\mathbb{P}} = \frac{{\rm d}\tilde{\mathbb{P}}_q}{{\rm d}\mathbb{P}_q} \cdot \frac{{\rm d}\tilde{\mathbb{P}}_p}{{\rm d}\mathbb{P}_p}\), which represents the change of measure from \(\mathbb{P}\) to \(\tilde{\mathbb{P}}\). This derivative is given by an exponential martingale associated with the drift terms, which describes the behavior of the Brownian motion under the new measure after the removal of the drift. For the angle variables \(q\), the Radon-Nikodym derivative is
	\begin{displaymath}
		\begin{aligned}
			\frac{d\tilde{\mathbb{P}_q}}{d\mathbb{P}_q} = \exp &\left( \int_0^1 \left\langle \sigma_q^{-1}(s) \left( \frac{\partial H}{\partial y_p}(y_q, y_p) - \dot{\varphi}_q(s) \right), \,{\rm d}W_q(s) \right\rangle\right. \\
			&\left. - \frac{1}{2} \int_0^1 \left| \sigma_q^{-1}(s) \left( \frac{\partial H}{\partial y_p}(y_q, y_p) - \dot{\varphi}_q(s) \right) \right|^2 \,{\rm d}s \right),
		\end{aligned}
	\end{displaymath}
	and similarly for the action variables \(p\)
	\begin{displaymath}
		\begin{aligned}
			\frac{d\tilde{\mathbb{P}}_p}{d\mathbb{P}_p} = \exp &\left( \int_0^1 \left\langle \sigma_p^{-1}(s) \left( -\frac{\partial H}{\partial y_q}(y_q, y_p) - \dot{\varphi}_p(s) \right), \,{\rm d}W_p(s) \right\rangle\right. \\
			&\left. - \frac{1}{2} \int_0^1 \left| \sigma_p^{-1}(s) \left( \frac{\partial H}{\partial y_q}(y_q, y_p) + \dot{\varphi}_p(s) \right) \right|^2 \,{\rm d}s \right).
		\end{aligned}
	\end{displaymath}
	So,
	\begin{displaymath}
		\begin{aligned}
			\mathcal{R} = \exp &\left( \int_0^1 \left\langle \sigma_q^{-1}(s) \left( \frac{\partial H}{\partial y_p}(y_q, y_p) - \dot{\varphi}_q(s) \right), \,{\rm d}W_q(s) \right\rangle\right. \\
			&- \int_0^1 \left\langle \sigma_p^{-1}(s) \left( \frac{\partial H}{\partial y_q}(y_q, y_p) + \dot{\varphi}_p(s) \right), \,{\rm d}W_p(s) \right\rangle
			\\ & - \frac{1}{2} \int_0^1 \left| \sigma_q^{-1}(s) \left( \frac{\partial H}{\partial y_p}(y_q, y_p) - \dot{\varphi}_q(s) \right) \right|^2 \,{\rm d}s\\
			&\left. - \frac{1}{2} \int_0^1 \left| \sigma_p^{-1}(s) \left( \frac{\partial H}{\partial y_q}(y_q, y_p) + \dot{\varphi}_p(s) \right) \right|^2 \,{\rm d}s \right).
		\end{aligned}
	\end{displaymath}
	We now aim to compute the transition probability of the system's path remaining close to the reference path \(\varphi(t)\). Using Girsanov's Theorem, this probability can be expressed as
	\begin{equation}\label{3.6}
		\begin{aligned}
			&\quad \frac{\mathbb{P}\left(\left\|  (q, p) - (\varphi_q, \varphi_p) \right\|_{\alpha}  \leq \epsilon\right)}{\mathbb{P}\left(\left\|  W^{\sigma} \right\|_{\alpha}  \leq \epsilon\right)}
			= \frac{\tilde{\mathbb{P}}\left(\left\| (Y_q, Y_p) -(\varphi_q, \varphi_p) \right\|_{\alpha} \leq \epsilon\right)}{\mathbb{P}\left(\left\| W^{\sigma} \right\|_{\alpha} \leq \epsilon\right)}\\
			&= \frac{\tilde{\mathbb{P}}\left(\left\| W^{\sigma} \right\|_{\alpha} \leq \epsilon\right)}{\mathbb{P}\left( \left\| W^{\sigma} \right\|_{\alpha} \leq \epsilon\right)}
			= \frac{\mathbb{E} \left( \mathcal{R}\mathbb{I}_{ \left\| W^{\sigma} \right\|_{\alpha} \leq \epsilon} \right)}{\mathbb{P}\left( \left\| W^{\sigma} \right\|_{\alpha} \leq \epsilon\right)}
			= \mathbb{E}\left( \mathcal{R} ~\big|~ \left\| W^{\sigma} \right\|_{\alpha} \leq \epsilon \right)\\
			& = \exp\left\lbrace -\frac{1}{2} \left(  \int_0^1 \left| \sigma_q^{-1}(t) \left( \dot{\varphi}_q - \frac{\partial H}{\partial \varphi_p}(\varphi_q, \varphi_p) \right) \right|^2 \,{\rm d}t + \int_0^1 \left| \sigma_p^{-1}(t) \left( \dot{\varphi}_p + \frac{\partial H}{\partial \varphi_q}(\varphi_q, \varphi_p) \right) \right|^2 \,{\rm d}t \right) \right\rbrace 
			\\& \quad \times \mathbb{E} \left( \exp\left\lbrace  \sum_{i=1}^{6} B_i \right\rbrace ~\big|~ \left\| W^{\sigma} \right\|_{\alpha} \leq \epsilon \right),
		\end{aligned}
	\end{equation}
	where $B_i$ represents the deviations in the path arising from drift and perturbations, it exhibits stochastic properties. This is further clarified by the following detailed expression:
	\begin{displaymath}
		\begin{aligned}
			B_1 &= \int_0^1 \left\langle \sigma_q^{-1}(s) \frac{\partial H}{\partial y_p}(y_q, y_p) , \,{\rm d}W_q(s) \right\rangle - \int_0^1 \left\langle \sigma_p^{-1}(s)  \frac{\partial H}{\partial y_q}(y_q, y_p), \,{\rm d}W_p(s) \right\rangle,\\
			B_2 &= - \int_0^1 \left\langle \sigma_q^{-1}(s) \dot{\varphi}_q(s), \,{\rm d}W_q(s) \right\rangle - \int_0^1 \left\langle \sigma_p^{-1}(s) \dot{\varphi}_p(s), \,{\rm d}W_p(s) \right\rangle,\\
			B_3 &= \frac{1}{2} \int_0^1 \left| \sigma_q^{-1}(s) \frac{\partial H}{\partial \varphi_p}(\varphi_q, \varphi_p)  \right|^2 \,{\rm d}s - \frac{1}{2} \int_0^1 \left| \sigma_q^{-1}(s)  \frac{\partial H}{\partial y_p}(y_q, y_p) \right|^2 \,{\rm d}s,\\
			B_4 &= \frac{1}{2} \int_0^1 \left| \sigma_p^{-1}(s) \frac{\partial H}{\partial \varphi_q}(\varphi_q, \varphi_p)  \right|^2 \,{\rm d}s - \frac{1}{2} \int_0^1 \left| \sigma_p^{-1}(s)  \frac{\partial H}{\partial y_q}(y_q, y_p) \right|^2 \,{\rm d}s,\\
			B_5 &= \int_0^1 \left\langle \sigma_q^{-1}(s) \left( \frac{\partial H}{\partial y_p}(y_q, y_p) - \frac{\partial H}{\partial \varphi_p}(\varphi_q, \varphi_p) \right), \sigma_q^{-1}(s) \dot{\varphi}_q(s) \right\rangle \,{\rm d}s,\\
			B_6 &= - \int_0^1 \left\langle \sigma_p^{-1}(s) \left( \frac{\partial H}{\partial y_q}(y_q, y_p) - \frac{\partial H}{\partial \varphi_q}(\varphi_q, \varphi_p) \right), \sigma_p^{-1}(s) \dot{\varphi}_p(s) \right\rangle \,{\rm d}s.
		\end{aligned}
	\end{displaymath}
	For the second term $B_2$, we have
	\begin{equation}
		\begin{aligned}
			B_2 &= - \int_0^1 \left\langle \sigma_q^{-1}(s) \dot{\varphi}_q(s), \,{\rm d}W_q(s) \right\rangle - \int_0^1 \left\langle \sigma_p^{-1}(s) \dot{\varphi}_p(s), \,{\rm d}W_p(s) \right\rangle
			\\&= - \sum_{i=1}^n \left( \int_0^1 \left[ \sigma_q^{-1}(s) \dot{\varphi}_q(s)\right]_{i}  \,{\rm d}W_{q,i}(s) + \int_0^1 \left[ \sigma_p^{-1}(s) \dot{\varphi}_p(s)\right]_{i}  \,{\rm d}W_{p,i}(s) \right).
		\end{aligned}\notag
	\end{equation}
	It is straightforward to demonstrate that $\sigma_q^{-1}(s) \dot{\varphi}_q(s) \in L^2\left( [0,1] ; \mathbb{R}^n \right) $ and $\sigma_p^{-1}(s) \dot{\varphi}_p(s) \in L^2\left( [0,1] ; \mathbb{R}^n \right)$. Applying Lemma $\ref{lemma 2.4}$ and Lemma $\ref{lemma 2.8}$, we subsequently obtain
	\begin{equation}\label{3.7}
		\limsup\limits_{\epsilon \to 0} \mathbb{E}\left({\rm exp}\left\{ cB_2 \right\} ~\big|~\left\| W^{\sigma} \right\|_{\alpha} < \epsilon \right) = 1
	\end{equation}
	for all $c \in \mathbb{R}$.
	
	For the third term $B_3$, 
	\begin{displaymath}
		\begin{aligned}
			B_3 &= \frac{1}{2} \int_0^1 \left| \sigma_q^{-1}(s) \frac{\partial H}{\partial \varphi_p}(\varphi_q, \varphi_p)  \right|^2 \,{\rm d}s - \frac{1}{2} \int_0^1 \left| \sigma_q^{-1}(s)  \frac{\partial H}{\partial y_p}(y_q, y_p) \right| ^2 \,{\rm d}s
			\\ &\leq \frac{1}{2} \int_{0}^{1} {\left| \left\| \sigma_q^{-1}(s)  \right\|\right|^2 \left|  \frac{\partial H}{\partial \varphi_p}(\varphi_q, \varphi_p) - \frac{\partial H}{\partial y_p}(y_q, y_p) \right|^2 \,{\rm d}s}\\
			& \quad + \int_{0}^{1} {\left| \left\| \sigma_q^{-1}(s)  \right\|\right|^2 \left|\frac{\partial H}{\partial \varphi_p}(\varphi_q, \varphi_p) - \frac{\partial H}{\partial y_p}(y_q, y_p) \right| \left| \frac{\partial H}{\partial y_p}(y_q, y_p) \right|} \,{\rm d}s.
		\end{aligned}
	\end{displaymath}
	In Condition (C1), since \(\frac{\partial H}{\partial p}\) is Lipschitz continuous, we have the following estimate
	\begin{equation}\label{5}
		\begin{aligned}
			& \quad \left| \frac{\partial H}{\partial y_p}(y_q, y_p)  -  \frac{\partial H}{\partial \varphi_p}(\varphi_q, \varphi_p)\right|\\
			&= \left| \frac{\partial H}{\partial \left( \varphi_p + W^{\sigma}_p \right)}((\varphi_q + W^{\sigma}_q), (\varphi_p + W^{\sigma}_p))-  \frac{\partial H}{\partial \varphi_p}(\varphi_q, \varphi_p)\right|
			\leq L \left| W^{\sigma} \right|.
		\end{aligned}
	\end{equation}
	Inequality $\eqref{5}$ and the boundedness of $\frac{\partial H}{\partial y_p}(y_q, y_p)$ and $\sigma_q^{-1}(t)$ imply that
	\begin{equation}\label{3.9}
		\limsup\limits_{\epsilon \to 0} \mathbb{E}\left({\rm exp}\left\{ cB_3 \right\} ~\big|~\left\| W^{\sigma} \right\|_{\alpha} < \epsilon \right) = 1
	\end{equation}
	for all $c \in \mathbb{R}$.
	
	For the fourth term $B_4$, employing the same proof technique as for the third term $B_3$, we have
	\begin{equation}\label{3.10}
		\limsup\limits_{\epsilon \to 0} \mathbb{E}\left({\rm exp}\left\{ cB_4 \right\} ~\big|~\left\| W^{\sigma} \right\|_{\alpha} < \epsilon \right) = 1
	\end{equation}
	for all $c \in \mathbb{R}$.
	
	For the fifth term $B_5$, applying inequality $\eqref{5}$ and the boundedness of $\dot{\varphi}_q(t)$ and $\sigma_q^{-1}(t)$, we have
	\begin{displaymath}
		\begin{aligned}
			B_5 &= \int_0^1 \left\langle \sigma_q^{-1}(s) \left( \frac{\partial H}{\partial y_p}(y_q, y_p) - \frac{\partial H}{\partial \varphi_p}(\varphi_q, \varphi_p) \right), \sigma_q^{-1}(s) \dot{\varphi}_q(s) \right\rangle \,{\rm d}s\\
			& \leq C \int_0^1 \left| \left\| \sigma_q^{-1}(s)  \right\|\right|^2 \left|  \frac{\partial H}{\partial y_p}(y_q, y_p) - \frac{\partial H}{\partial \varphi_p}(\varphi_q, \varphi_p) \right| \left| \dot{\varphi}_q(s) \right| \,{\rm d}s\\
			& \leq CL \left\| W^{\sigma} \right\|_{\alpha}.
		\end{aligned}
	\end{displaymath}
	Thus,
	\begin{equation}\label{3.11}
		\limsup\limits_{\epsilon \to 0} \mathbb{E}\left({\rm exp}\left\{ cB_5 \right\} ~\big|~\left\|  W^{\sigma} \right\|_{\alpha}  < \epsilon \right) = 1
	\end{equation}
	for all $c \in \mathbb{R}$.
	
	For the sixth term $B_6$, employing the same proof technique as for the fifth term $B_5$, we have
	\begin{equation}\label{3.12}
		\limsup\limits_{\epsilon \to 0} \mathbb{E}\left({\rm exp}\left\{ cB_6 \right\} ~\big|~\left\|  W^{\sigma} \right\|_{\alpha} < \epsilon \right) = 1
	\end{equation}
	for all $c \in \mathbb{R}$.
	
	For the first term $B_1$, in order to write it as a whole, we define
	\begin{displaymath}
		\begin{aligned}
			\sigma^{-1}(t) &:= \begin{bmatrix}\sigma_q^{-1}(t) & 0 \\ 0 & \sigma_p^{-1}(t)\end{bmatrix}, &
			J \nabla H(y) &:= \begin{pmatrix}\frac{\partial H}{\partial y_p}(y_q, y_p) \\[6pt] - \frac{\partial H}{\partial y_q}(y_q, y_p)\end{pmatrix}, \\
			W(t) &:= \begin{pmatrix} W_q(t) \\ W_p(t) \end{pmatrix}, &
			\mathrm{d}W(t) &:= \begin{pmatrix} \mathrm{d}W_q(t) \\ \mathrm{d}W_p(t) \end{pmatrix}.
		\end{aligned}
	\end{displaymath}
	Under the assumption of small perturbations, it is feasible to apply a Taylor series expansion to $J \nabla H(y)$. Specifically, we have
	\begin{displaymath}
		\begin{aligned}
			J \nabla H(y) &= \begin{pmatrix}\frac{\partial H}{\partial \varphi_p}(\varphi_q, \varphi_p) \\ - \frac{\partial H}{\partial \varphi_q}(\varphi_q, \varphi_p)\\ \end{pmatrix} + \begin{bmatrix}\frac{\partial^2H}{\partial \varphi_q \partial \varphi_p}(\varphi_q, \varphi_p) & \frac{\partial^2H}{\partial \varphi_p^2}(\varphi_q, \varphi_p) \\ - \frac{\partial^2H}{\partial \varphi_q^2}(\varphi_q, \varphi_p) & - \frac{\partial^2H}{\partial \varphi_p \partial \varphi_q}(\varphi_q, \varphi_p) \\ \end{bmatrix} \begin{pmatrix} W^{\sigma}_q(t) \\ W^{\sigma}_p(t)  \\ \end{pmatrix}
			+  \begin{pmatrix}R_q(t) \\ R_p(t)  \\ \end{pmatrix}
			\\& := J \nabla H(\varphi) + J \nabla^2 H(\varphi) W^{\sigma} + R(t),
		\end{aligned}
	\end{displaymath}
	where $J$ denotes the standard symplectic matrix, $\nabla$ is the gradient operator, $\nabla H$ denotes the gradient of $H$, and $\nabla^{2}H$ denotes the Hessian matrix of $H$. According to the properties of the Taylor expansion, when $H \in C^3_b(\mathbb{R}^n \times \mathbb{R}^n, \mathbb{R})$ and $\left\|  W^{\sigma} \right\|_{\alpha}  \leq \epsilon$, we can estimate the remainder term $R(t)$ as
	\begin{displaymath}
		\sup\limits_{0 \leq t \leq 1}\left|  R(t) \right| \leq k \epsilon^2.
	\end{displaymath}
	Hence, $B_1$ can be written as
	\begin{displaymath}
		\begin{aligned}
			B_1 &= \int_0^1 \left\langle \sigma_q^{-1}(s) \frac{\partial H}{\partial y_p}(y_q, y_p) , \,{\rm d}W_q(s) \right\rangle - \int_0^1 \left\langle \sigma_p^{-1}(s)  \frac{\partial H}{\partial y_q}(y_q, y_p), \,{\rm d}W_p(s) \right\rangle\\
			&= \int_0^1 \left\langle \sigma^{-1}(s)  J \nabla H(y),  \,{\rm d}W(s)\right\rangle \\
			&= \int_0^1 \left\langle \sigma^{-1}(s)  J \nabla H(\varphi),  \,{\rm d}W(s)
			\right\rangle + \int_0^1 \left\langle \sigma^{-1}(s) J \nabla^2 H(\varphi) W^{\sigma},  \,{\rm d}W(s)\right\rangle\\
			&\quad + \int_0^1 \left\langle \sigma^{-1}(s)   R(s),  \,{\rm d}W(s)\right\rangle 
			\\ &:= B_{11} + B_{12} + B_{13}.
		\end{aligned}
	\end{displaymath}
	The term $B_{11}$ has the same expression as $B_2$
	\begin{displaymath}
		\begin{aligned}
			B_{11} =  \int_0^1 \left\langle \sigma^{-1}(s)  J \nabla H(\varphi),  \,{\rm d}W(s)\right\rangle.
		\end{aligned}
	\end{displaymath}
	Due to $H \in C^3_b(\mathbb{R}^n \times \mathbb{R}^n, \mathbb{R})$, we can show that $\sigma^{-1}(s)  J \nabla H(\varphi) \in L^2([0,1], \mathbb{R}^{2n})$. Using the same method as item $B_2$ yields
	\begin{equation}\label{3.13}
		\limsup\limits_{\epsilon \to 0} \mathbb{E}\left({\rm exp}\left\{ cB_{11} \right\} ~\big|~\left\|  W^{\sigma} \right\|_{\alpha} < \epsilon \right) = 1
	\end{equation}
	for all $c \in \mathbb{R}$.
	In order to apply Lemma $\ref{lemma 2.5}$, we will express the term $B_{12}$ as a double stochastic integral with respect to $W$. We have
	\begin{displaymath}
		\begin{aligned}
			B_{12} & = \int_0^1  \left\langle \sigma^{-1}(s)  J \nabla^2 H(\varphi) W^{\sigma},  \,{\rm d}W(s)\right\rangle
			\\ &= \int_{0}^{1} \int_{0}^{s}  \left\langle \sigma^{-1}(s)  J \nabla^2 H(\varphi) \sigma(t),  \,{\rm d}W(t)\right\rangle \,{\rm d}W(s)
			\\ &= \int_{0}^{1}  \int_{0}^{1} \sigma^{-1}(s) J \nabla^2 H(\varphi) \sigma(t) 1_{t \leq s} \,{\rm d}W(t) \,{\rm d}W(s),
		\end{aligned}
	\end{displaymath}
	where $1_{t \leq s}$ is an indicator function. Define
	\begin{displaymath}
		F(s,t) := \sigma^{-1}(s) J \nabla^2 H(\varphi) \sigma(t) 1_{t \leq s}.
	\end{displaymath}
	Hence, $B_{12} = I_2(\tilde{F})$, where $\tilde{F} := \frac{1}{2} (F + F^{*})$ is the symmetrization of $F$. According to Conditions (C1) and (C2), the operator $\mathcal{S}(\tilde{F})$ is nuclear, and its trace can be computed as
	\begin{displaymath}
		\mathrm{Tr}~\tilde{F} = \mathrm{Tr}~{F} = \frac{1}{2} \int_{0}^{1} {\mathrm{Tr}~ {F}(t, t) } \,{\rm d}t = \frac{1}{2}  \int_{0}^{1} \mathrm{Tr}~\left(  J \nabla^2 H(\varphi) \right)  \,{\rm d}t.
	\end{displaymath}
	In addition, due to $H \in C^3_b(\mathbb{R}^n \times \mathbb{R}^n, \mathbb{R})$, we have
	\begin{displaymath}
		\mathrm{Tr}~\left( \frac{\partial^2H}{\partial \varphi_q \partial \varphi_p}(\varphi_q, \varphi_p) \right) = \mathrm{Tr}~\left( \frac{\partial^2H}{\partial \varphi_p \partial \varphi_q}(\varphi_q, \varphi_p) \right),
	\end{displaymath}
	and since the Hamiltonian equations possess a symplectic structure, we obtain
	\begin{displaymath}
		\mathrm{Tr}~\left(  J \nabla^2 H(\varphi) \right) = \mathrm{Tr}~\left( \frac{\partial^2H}{\partial \varphi_q \partial \varphi_p}(\varphi_q, \varphi_p) \right) - \mathrm{Tr}~\left( \frac{\partial^2H}{\partial \varphi_p \partial \varphi_q}(\varphi_q, \varphi_p) \right) = 0.
	\end{displaymath}
	By Lemma $\ref{lemma 2.5}$ and Lemma $\ref{lemma 2.8}$, we have
	\begin{equation}\label{3.14}
		\limsup\limits_{\epsilon \to 0} \mathbb{E}\left({\rm exp}\left\{ cB_{12} \right\} ~\big|~\left\|  W^{\sigma} \right\|_{\alpha} < \epsilon \right) = 1
	\end{equation}
	for all $c \in \mathbb{R}$.
	Finally, we study the behaviour of the term $B_{13}$. For any $c \in \mathbb{R}$ and $\delta > 0$, we have
	\begin{equation}\label{12}
		\begin{aligned}
			&\quad \mathbb{E} \left( {\rm exp} \left\{ cB_{13} \right\}~\big|~\left\|  W^{\sigma} \right\|_{\alpha} \leq \epsilon \right)\\
			&= \int_{0}^{\infty} {e^x \mathbb{P}\left( \left| c\int_0^1 \left\langle \sigma^{-1}(s) R(s),  \,{\rm d}W(s) \right\rangle \right| > x ~\big|~ \left\|  W^{\sigma} \right\|_{\alpha} \leq \epsilon \right)} \,{\rm d}x
			\\ &\leq e^{\delta} + \int_{\delta}^{\infty} {e^x \mathbb{P}\left( \left| c\int_{0}^{1} \left\langle \sigma^{-1}(s) R(s),  \,{\rm d}W(s) \right\rangle \right| > x ~\big|~ \left\|  W^{\sigma} \right\|_{\alpha} \leq \epsilon \right)} \,{\rm d}x.
		\end{aligned}
	\end{equation}
	Define the martingale $M_t = c\int_{0}^{t} \langle \sigma^{-1}(s) R(s), \,{\rm d}W(s) \rangle$. We have the estimate about its quadratic variation
	\begin{displaymath}
		\langle M_t \rangle = c^2\int_{0}^{t} {\left| \sigma^{-1}(s) R(s)\right|^2} \,{\rm d}s \leq C \epsilon^4
	\end{displaymath}
	for some $C > 0$. Using the exponential inequality for martingales, we obtain
	\begin{displaymath}
		\mathbb{P}\left( \left| c\int_{0}^{1} \left\langle \sigma^{-1}(s) R(s),  \,{\rm d}W(s) \right\rangle \right| > x, \left\|  W^{\sigma} \right\|_{\alpha} \leq \epsilon \right) \leq {\rm exp}\left\{ -\frac{x^2}{2c\epsilon^4} \right\}.
	\end{displaymath}
	Then, by Lemma $\ref{lemma 2.6}$, we have
	\begin{displaymath}
		\begin{aligned}
			&\quad \mathbb{P}\left( \left| c\int_{0}^{1} \left\langle \sigma^{-1}(s) 	R(s),  \,{\rm d}W(s) \right\rangle \right| > x ~\big|~ \left\|  W^{\sigma} \right\|_{\alpha} \leq \epsilon \right)\\
			&\leq {\rm exp}\left\{ -\frac{x^2}{2c\epsilon^4} \right\}
			{\rm exp}\left\{ c \left( \frac{\epsilon}{M} \right)^{-\frac{2}{1 - 2\alpha}} \right\}.
		\end{aligned}
	\end{displaymath}
	According to the latter estimate and taking limits in $\eqref{12}$, if $0 < \alpha < \frac{1}{4}$, then
	\begin{displaymath}
		\begin{aligned}
			-\epsilon^{-4} + \epsilon^{-\frac{2}{1 - 2\alpha}} \to - \infty \qquad \text{as}~\epsilon \to 0 .
		\end{aligned}
	\end{displaymath}
	So we have
	\begin{equation}\label{3.16}
		\limsup\limits_{\epsilon \to 0} \mathbb{E}\left({\rm exp}\left\{ cB_{13} \right\} ~\big|~\left\|  W^{\sigma} \right\|_{\alpha} < \epsilon \right) = 1
	\end{equation}
	for all $c \in \mathbb{R}$, as $\epsilon \to 0$ and $\delta \to 0$.
	
	Based on our previous results \cite{42}, consider a stochastic differential equation
	\begin{displaymath}
		\mathrm{d} X_t = f(t, X_t)\mathrm{d} t + g(t)\mathrm{d}W_t,
	\end{displaymath}
	the Onsager-Machlup functional, which accounts for path deviations caused by drift and perturbations, is given by
	\begin{displaymath}
		\int_{0}^{1} OM(\varphi, \dot{\varphi}) \,{\rm d}s = \int_{0}^{1} \left| {g(s)}^{-1} \left({\dot{\varphi}_s - f(s, \varphi_s)} \right)\right|^2 \,{\rm d}s + \int_{0}^{1} {{\rm div}^{g}_{x} f(s ,\varphi_s)} \,{\rm d}s,
	\end{displaymath}
	where \({\rm div}^{g}_{x} f(s, \varphi_s) := \mathrm{Tr}~\left({g(s)^{-1}}{\nabla f(s ,\varphi_s)}g(s) \right)\) representes a correction term.
	
	For Hamiltonian systems with conserved energy \( H \in C^3_b(\mathbb{R}^n \times \mathbb{R}^n, \mathbb{R}) \), the correction term vanishes. This is a direct consequence of the underlying symplectic structure: the Hamiltonian vector field \( J \nabla H \) is divergence-free, that is, \( \nabla \cdot (J \nabla H) = 0 \).  This structural property, combined with the estimates in \eqref{3.6}, \eqref{3.7}, \eqref{3.9}-\eqref{3.14}, and \eqref{3.16}, implies that the stochastic correction term in the pathwise expansion is identically zero. Consequently, the Onsager-Machlup functional for the stochastic Hamiltonian system reduces to the following explicit form
	\begin{displaymath}
		\begin{aligned}
			\int_{0}^{1} OM(\varphi_q, \varphi_p) \,{\rm d}s = \int_0^1 \left| \sigma_q^{-1}(s) \left( \dot{\varphi}_q - \frac{\partial H}{\partial \varphi_p}(\varphi_q, \varphi_p) \right) \right|^2 \,{\rm d}s + \int_0^1 \left| \sigma_p^{-1}(s) \left( \dot{\varphi}_p + \frac{\partial H}{\partial \varphi_q}(\varphi_q, \varphi_p) \right) \right|^2 \,{\rm d}s.
		\end{aligned}
	\end{displaymath}
\end{proof}

\section{The most probable path in stochastic Hamiltonian systems}
The Onsager-Machlup functional quantifies the relative likelihood of continuous trajectories in the path space of a stochastic dynamical system. Consequently, the minimizer of the Onsager-Machlup functional $ \int_0^1 OM(\varphi_q, \varphi_p) \mathrm{d}t$ corresponds to the most probable path $ \hat{\varphi}(t) := (\hat{\varphi}_q(t), \hat{\varphi}_p(t)) $ in the stochastic Hamiltonian system \eqref{3.1}.

Owing to the specific structure of the Onsager-Machlup functional for such systems, given by
\begin{displaymath}
	\int_0^1 OM(\varphi_q, \varphi_p) \,\mathrm{d}t = \int_0^1 \left| \sigma_q^{-1}(t) \left( \dot{\varphi}_q - \frac{\partial H}{\partial \varphi_p}(\varphi_q, \varphi_p) \right) \right|^2 \mathrm{d}t 
	+ \int_0^1 \left| \sigma_p^{-1}(t) \left( \dot{\varphi}_p + \frac{\partial H}{\partial \varphi_q}(\varphi_q, \varphi_p) \right) \right|^2 \mathrm{d}t,
\end{displaymath}
which is nonnegative and vanishes if and only if $(\varphi_q,\varphi_p)$ satisfies, almost everywhere on $[0,1]$, the deterministic Hamiltonian equations obtained by setting the stochastic perturbation terms in~\eqref{3.1} equal to zero. Thus, the most probable path $\hat{\varphi}(t)$ is precisely the solution of
\begin{equation}\label{4.1}
	\begin{cases}
		\mathrm{d} \varphi_q(t) = \frac{\partial H}{\partial \varphi_p}(\varphi_q(t), \varphi_p(t)) \,{\rm d}t,\\
		\mathrm{d} \varphi_p(t) = -\frac{\partial H}{\partial \varphi_q}(\varphi_q(t), \varphi_p(t)) \,{\rm d}t,
	\end{cases}
\end{equation}
with initial condition $\left( \hat{\varphi}_q(0), \hat{\varphi}_p(0)\right) = \left( q_0, p_0\right) $. This indicates that despite the presence of stochastic perturbations, the system is most likely to evolve along the classical Hamiltonian trajectory.

Therefore, although stochastic perturbations introduce complexity and uncertainty into Hamiltonian systems, by minimizing the Onsager-Machlup functional, we can still reveal the dominant factors of system behavior, namely the evolution along the most probable path. This discovery is of significant importance in both theoretical research and practical applications. To further illustrate this conclusion, we present the following example.
\begin{example}
	Consider a one-dimensional harmonic oscillator with the Hamiltonian $ H(q, p) = \frac{p^2}{2m} + \frac{1}{2} k q^2 $, where $ \frac{p^2}{2m} $ represents the kinetic energy term, and $ V(q) = \frac{1}{2} k q^2 $ is the potential energy term. Here, $ m=1 $ denotes the mass, $k=1$ denotes the spring constant, and $ q $ and $ p $ respectively represent position and momentum. The classical Hamiltonian equations are given by
	\begin{equation*}
		\begin{cases}
			\begin{aligned}
				\mathrm{d}q(t) &= p \,{\rm d}t,\quad &q(0) &= 50,\\
				\mathrm{d}p(t) &= -q \,{\rm d}t,\quad &p(0) &= 0.
			\end{aligned}
		\end{cases}
	\end{equation*}
	Additionally, consider a one-dimensional harmonic oscillator under the influence of white noise perturbations, defined by the following stochastic Hamiltonian system
	\begin{equation*}
		\begin{cases}
			\begin{aligned}
				\mathrm{d}q(t) &= p \,{\rm d}t + (1 + \sin(t)) \,{\rm d}W_q(t), & q(0) &= 50,\\
				\mathrm{d}p(t) &= -q \,{\rm d}t + (1 + 2\cos(3t)) \,{\rm d}W_p(t), & p(0) &= 0,
			\end{aligned}
		\end{cases}
	\end{equation*}
	where $W_q(t)$ and $W_p(t)$ are independent one-dimensional Brownian motions.
\end{example}

In Fig.\ref{F.1}, we observe a clear contrast between the phase space trajectories of the deterministic harmonic oscillator and the stochastically perturbed oscillator. In the absence of perturbations, the deterministic system follows a stable, closed circular orbit, where momentum $ p $ and position $ q $ evolve periodically, reflecting energy conservation. This circular trajectory represents the steady exchange between kinetic and potential energy, maintaining the Hamiltonian.

\begin{figure}
	\fontsize{10}{12}
	\centering
	\includegraphics[width=10cm]{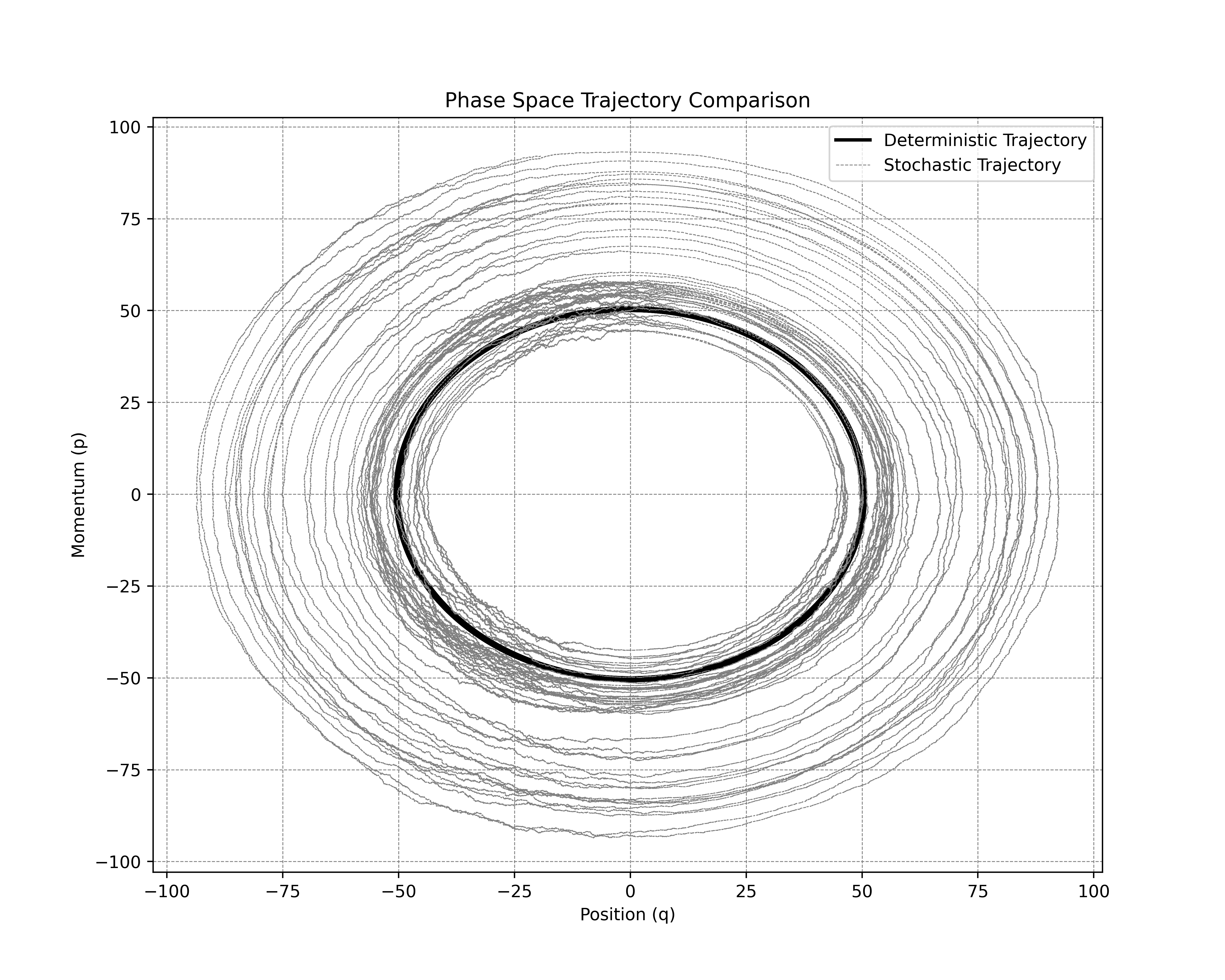}
	\caption{\selectfont Comparison of phase space trajectories between the deterministic (bold solid line) and stochastically perturbed (thin dashed lines) harmonic oscillators. The system is modeled with the following parameters: mass \(m = 1\), spring constant \(k = 1\), time step \(\Delta t = 0.0001\), and total simulation time \(T = 300\). The deterministic trajectory follows a stable circular orbit in phase space, indicating energy conservation and periodic motion. In contrast, the stochastic system introduces white noise perturbations using the Euler-Maruyama method, leading to diffusive and irregular trajectories that deviate from the original orbit over time.}
	\label{F.1}
\end{figure}

When white noise perturbations are introduced, however, the trajectory deviates from this ideal path, displaying diffusion and irregularity. As shown in Fig.\ref{F.1}, the stochastic oscillator's trajectory gradually drifts away from the stable orbit, with random fluctuations disrupting the original pattern. This diffusion occurs because the noise introduces energy fluctuations, causing deviations in $ p $ and $ q $ that prevent strict adherence to the classical oscillator's path. While some periodic behavior remains, the perturbations induce instability, leading to a trajectory that expands outward in phase space over time.

Nevertheless, we also observe that the stochastic oscillator's phase space trajectories remain densely concentrated near the deterministic oscillator's stable circular orbit. This aligns with our theoretical conclusions: the most probable path of the stochastically perturbed system stays close to the stable deterministic trajectory. This reinforces the understanding that, despite stochastic perturbations, the system tends to evolve near the deterministic solution.

Fig.\ref{F.2} provides further insight by displaying the probability distribution of the Hamiltonian $ H(q, p) $ for the stochastically perturbed oscillator. Notably, the Hamiltonian distribution $ H_S $ peaks near $ H_S = 1249.6399 $, closely matching the deterministic Hamiltonian value $ H_D = 1250 $. This behavior supports our theoretical conclusion that the most probable Hamiltonian value for the stochastically perturbed system coincides with that of the deterministic system. For most of the time, the system's Hamiltonian remains concentrated near the deterministic value, with large energy fluctuations being rare. This indicates that, despite stochastic perturbations, the system's overall evolution largely adheres to the conservation properties of the Hamiltonian system, with only local deviations due to noise.

\begin{figure}[htbp]
	\centering
	\includegraphics[width=11cm]{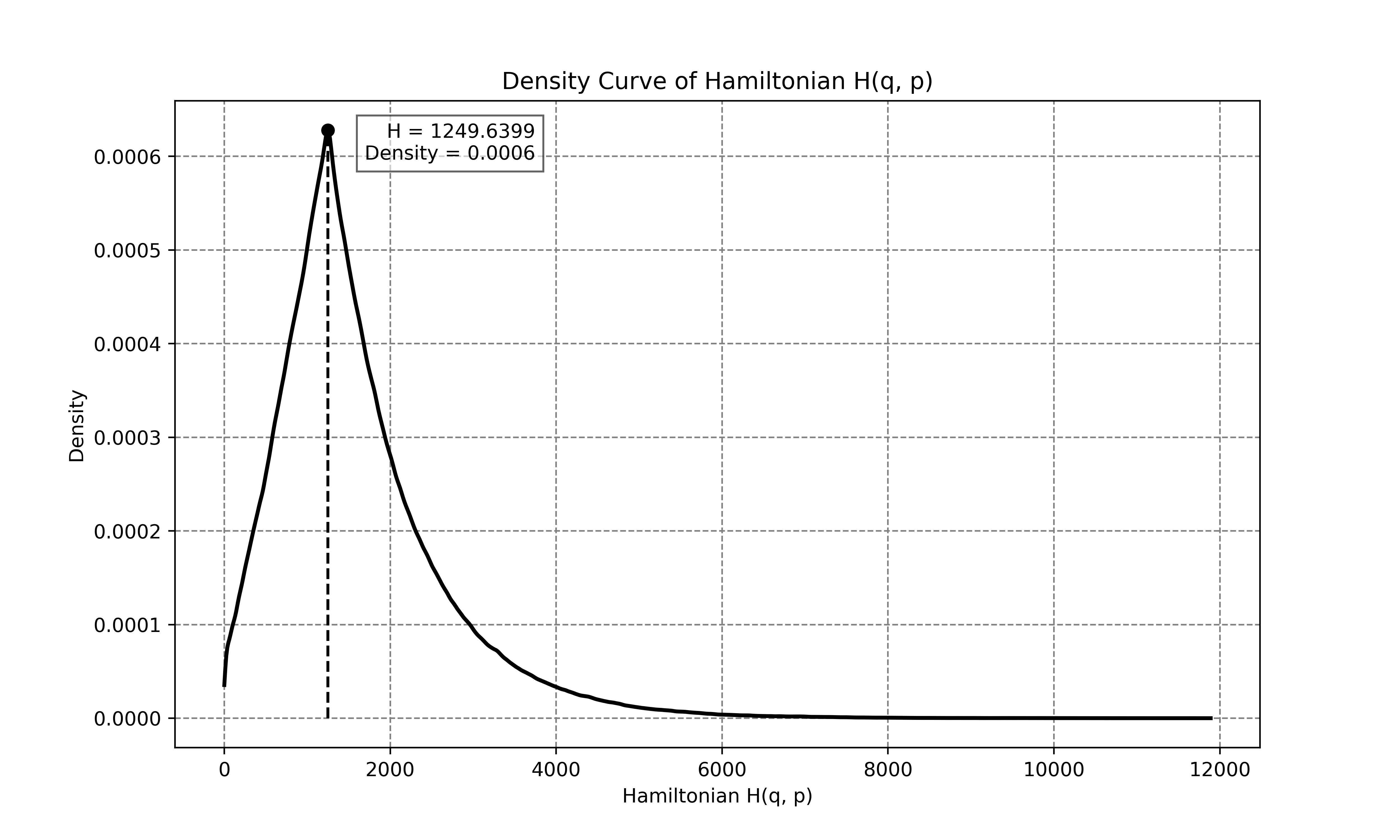}
	\caption{The distribution curve of the Hamiltonian $ H(q, p) = \frac{p^2}{2m} + \frac{1}{2}kq^2 $. The system is modeled with the following parameters: mass $ m = 1 $, spring constant $ k = 1 $, time step $ \Delta t = 0.001 $, total simulation time $ T = 300 $, and 5000 simulation runs. The maximum value of the Hamiltonian $ H $ is observed at 1249.6399, with a corresponding density of 0.0006. This result illustrates the distribution of the Hamiltonian in the presence of stochastic perturbations, showing that the system's energy tends to concentrate around specific values in most cases.}
	\label{F.2}
\end{figure}

In summary, Figures \ref{F.1} and \ref{F.2} jointly illustrate the dynamics of both the classical Hamiltonian system and its stochastically perturbed counterpart, comparing their phase-space trajectories and energy evolution. The numerical results demonstrate that, despite the presence of time-dependent additive noise, the most probable trajectory remains exactly the unperturbed Hamiltonian flow. More importantly, this observation reveals a structural stability of Hamiltonian systems in random environments: the conservation properties endowed by symplectic geometry exhibit strong robustness under stochastic perturbations. Furthermore, the noise intensity does not alter the most probable path itself, but only modifies its likelihood,i.e., the probability weight assigned to that trajectory by the Onsager-Machlup functional.

\section{Large Deviation Principle and Rate Function for Stochastic Hamiltonian Systems}

In this section, we derive the large deviation principle for stochastic Hamiltonian systems, combining the Onsager-Machlup functional and Freidlin-Wentzell theory \cite{46}. Large deviation principle describes the probability behavior of a system's trajectory deviating from the most probable path, with the rate function quantifying the decay rate of this probability.

Since our objective is to study the behavior of the trajectories in the stochastic Hamiltonian system as the noise term approaches zero, we simplify equation \eqref{3.1} into the following form for convenience,
\begin{equation}\label{5.1}
	\begin{cases}
		\mathrm{d}q(t) = \frac{\partial H}{\partial p}(q, p)\,\mathrm{d}t + \gamma \sigma_q(t)\,\mathrm{d}W_q(t), \\
		\mathrm{d}p(t) = -\frac{\partial H}{\partial q}(q, p)\,\mathrm{d}t + \gamma \sigma_p(t)\,\mathrm{d}W_p(t),
	\end{cases}
\end{equation}
with initial condition $\left( q(0) , p(0)\right)  = \left( q_0 , p_0\right)$. Here \( q(t) \) and \( p(t) \) represent the generalized position and momentum variables, respectively, and \( H(q, p) \) is the Hamiltonian function describing the system's energy, typically composed of kinetic and potential energy terms. \( W_q(t) \) and \( W_p(t) \) are independent Wiener processes, and the matrices \( \sigma_q(t) \) and \( \sigma_p(t) \) denote the diffusion coefficients for the stochastic components. The parameter \( \gamma \) represents the noise intensity. Our primary focus is on analyzing the statistical behavior of this system as \( \gamma \to 0 \). Similar to Section 3, our result holds for any finite interval \( t \in [0, T] \). For simplicity of notation, we will present the following theorem on the interval \([0, 1]\).

\begin{theorem} \label{T5.1} 
	Assuming that Conditions $(C1)$ and $(C2)$ hold, let $ X^\gamma(t) := (q^\gamma(t), p^\gamma(t)) $ be the solution to stochastic Hamiltonian system \eqref{5.1} with initial condition $x_0 := X^\gamma(0) = \left( q_0, p_0\right) $. As $ \gamma \to 0 $, the most probable path is given by the deterministic Hamiltonian equations \eqref{4.1}. Furthermore, fix $\alpha\in(0,\tfrac14)$. The family of probability measures on $C^\alpha([0,1];\mathbb{R}^{n} \times \mathbb{R}^{n})$ induced by $X^\gamma(\cdot)$ and equipped with the Hölder topology, satisfies a large deviation principle with speed $\gamma^2$ and rate function
	\begin{equation*}
		\begin{aligned}
			J(\varphi)=
			\begin{cases}
				\frac{1}{2}\left( \int_0^1 \left| \sigma_q^{-1}(t) \left( \dot{\varphi}_q - \frac{\partial H}{\partial \varphi_p}(\varphi_q, \varphi_p) \right) \right|^2 \,{\rm d}t + \int_0^1 \left| \sigma_p^{-1}(t) \left( \dot{\varphi}_p + \frac{\partial H}{\partial \varphi_q}(\varphi_q, \varphi_p) \right) \right|^2 \,{\rm d}t \right),\\  & \hspace{-2cm}\text{if } \varphi - x_0 \in \mathbb{H}^{1}_0;\\
				+\infty, & \hspace{-2cm} \text{otherwise},
			\end{cases}
		\end{aligned}
	\end{equation*}
	with \( \sigma_q^{-1}(t) \) and \( \sigma_p^{-1}(t) \) being the inverses of the diffusion matrices \( \sigma_q(t) \) and \( \sigma_p(t) \), respectively. More precisely, for every Borel set $\mathbb{A} \subset C^\alpha([0,1];\mathbb{R}^{n} \times \mathbb{R}^{n})$,
	\[
	-\inf_{\varphi \in \mathbb{A}^\circ} J(\varphi) \le
	\liminf_{\gamma \to 0}\gamma^2 \log \mathbb{P}(X^\gamma(\cdot) \in \mathbb{A})\le
	\limsup_{\gamma \to 0}\gamma^2 \log \mathbb{P}(X^\gamma(\cdot) \in \mathbb{A})\le
	-\inf_{\varphi \in \bar{\mathbb{A}} } J(\varphi).
	\]
\end{theorem}

\begin{proof}
	Since the sample paths are almost surely $\alpha$-Hölder continuous for every $\alpha<\frac12$, we may regard the law of $X^\gamma(\cdot)$ as a probability measure on $C^\alpha([0,1];\mathbb{R}^n\times\mathbb{R}^n)$). Moreover, the Sobolev embedding $\mathbb{H}_0^1 \hookrightarrow C^\alpha$ is continuous for any $\alpha < \frac12$. We first establish an estimate for the probability that the solution trajectories $X^\gamma(t)$ of the stochastic Hamiltonian system remain within an arbitrary tubular neighborhood. We then prove the lower and upper bounds in Definition~\ref{D2.8}, thereby completing the proof of the large deviation theorem.
	
	Equation $\eqref{5.1}$ merely concretizes the small random noise in equation $\eqref{3.1}$ as being of order $\gamma$, where $\gamma$ is a small parameter. Therefore, we employ a definition and method similar to those used in the proof of Theorem $\ref{T3.1}$. Let the reference path be given by $\varphi(t) = (\varphi_q(t), \varphi_p(t))$, where $\varphi(t)$ is a definite continuous path, $ \varphi(0) = x_0$ and $ \varphi - x_0 \in \mathbb{H}^{1}_0 $. We define the perturbed solution, denoted as $(y_q(t), y_p(t))$,
	\begin{displaymath}
		\begin{cases}
			y_q(t) = \varphi_q(t) + \gamma \int_0^t \sigma_q(s)\, {\rm d}W_q(s),\\
			y_p(t) = \varphi_p(t) + \gamma \int_0^t \sigma_p(s)\, {\rm d}W_p(s).
		\end{cases}
	\end{displaymath}
	To simplify the notation in the proof, we introduce the term $ W^{\sigma}(t) := \left( W^{\sigma}_q(t), W^{\sigma}_p(t) \right) $, which encapsulates the stochastic perturbation in the system
	\begin{displaymath}
		W^{\sigma}_q(t) := \int_0^t \sigma_q(s) \,{\rm d}W_q(s), \quad W^{\sigma}_p(t) := \int_0^t \sigma_p(s) \,{\rm d}W_p(s).
	\end{displaymath}
	
	Then we introduce a new probability measure \(\tilde{\mathbb{P}}\), under which the transformed Brownian motions are given by
	\begin{displaymath}
		\begin{aligned}
			\tilde{W}_q(t) &= W_q(t) - \frac{1}{\gamma} \int_0^t \sigma_q^{-1}(s) \left( \frac{\partial H}{\partial y_p}(y_q, y_p) - \dot{\varphi}_q(s) \right)\, {\rm d}s,\\
			\tilde{W}_p(t) &= W_p(t) - \frac{1}{\gamma} \int_0^t \sigma_p^{-1}(s) \left( -\frac{\partial H}{\partial y_q}(y_q, y_p) - \dot{\varphi}_p(s) \right)\, {\rm d}s.
		\end{aligned}
	\end{displaymath}
	
	The Radon-Nikodym derivative $\mathcal{R} := \frac{d\tilde{\mathbb{P}}}{d\mathbb{P}}$, representing the change of measure from $\mathbb{P}$ to $\tilde{\mathbb{P}}$, is given by the exponential martingale associated with the removed drift terms. Thus,
	\begin{displaymath}
		\begin{aligned}
			\mathcal{R} = \exp &\left\lbrace \frac{1}{\gamma} \left( \int_0^1 \left\langle \sigma_q^{-1}(s) \left( \frac{\partial H}{\partial y_p}(y_q, y_p) - \dot{\varphi}_q(s) \right), \,{\rm d}W_q(s) \right\rangle\right. \right.\\
			&\qquad \left. - \int_0^1 \left\langle \sigma_p^{-1}(s) \left( \frac{\partial H}{\partial y_q}(y_q, y_p) + \dot{\varphi}_p(s) \right), \,{\rm d}W_p(s) \right\rangle \right)\\ 
			& - \frac{1}{2 \gamma^2} \left( \int_0^1 \left| \sigma_q^{-1}(s) \left( \frac{\partial H}{\partial y_p}(y_q, y_p) - \dot{\varphi}_q(s) \right) \right|^2\, {\rm d}s\right. \\
			&\qquad \left. \left. + \int_0^1 \left| \sigma_p^{-1}(s) \left( \frac{\partial H}{\partial y_q}(y_q, y_p) + \dot{\varphi}_p(s) \right) \right|^2\, {\rm d}s \right) \right\rbrace.
		\end{aligned}
	\end{displaymath}
	
	Define
	\[
	\mathbb{K}(\varphi, \epsilon) = \{ x - x_0 \in \mathbb{H}^1_0 \mid \varphi - x_0 \in \mathbb{H}^1_0, \|x - \varphi\|_{\alpha} \leq \epsilon, \epsilon > 0 \}.
	\]
	Utilizing Girsanov's Theorem, we aim to compute the probability that the trajectory of the solution $ X^{\gamma}(t) $ of the stochastic Hamiltonian system remains in close proximity to a reference path $ \varphi(t) $ when the noise intensity $ \epsilon $ is minimal. Then
	\begin{equation}\label{52}
		\begin{aligned}
			&\quad \mathbb{P}(X^{\gamma}(\cdot) \in \mathbb{K}(\varphi, \epsilon) ) =  \mathbb{P}\left(\left\|  (q^{\gamma}, p^{\gamma}) - (\varphi_q, \varphi_p) \right\|_{\alpha} \leq \epsilon \right)\\
			&= \tilde{\mathbb{P}}\left(\left\| (Y_q, Y_p) -(\varphi_q, \varphi_p) \right\|_{\alpha} \leq \epsilon \right)
			= \mathbb{E} \left( \mathcal{R} \mathbb{I}_{ \left\| W^{\sigma} \right\|_{\alpha} \leq \frac{\epsilon}{\gamma}} \right) \\
			&= \mathbb{E} \left( \mathcal{R} ~\big|~ { \left\| W^{\sigma} \right\|_{\alpha} \leq \frac{\epsilon}{\gamma}} \right)  \times \mathbb{P}( \left\| W^{\sigma} \right\|_{\alpha} \leq \frac{\epsilon}{\gamma} ) \\ 
			&= \exp\left\lbrace -\frac{1}{2\gamma^2} \left(  \int_0^1 \left| \sigma_q^{-1}(s) \left( \dot{\varphi}_q - \frac{\partial H}{\partial \varphi_p}(\varphi_q, \varphi_p) \right) \right|^2\, {\rm d}s + \int_0^1 \left| \sigma_p^{-1}(s) \left( \dot{\varphi}_p + \frac{\partial H}{\partial \varphi_q}(\varphi_q, \varphi_p) \right) \right|^2\, {\rm d}s \right) \right\rbrace\\
			&\quad \times \mathbb{E} \left( \exp\left\lbrace \frac{1}{\gamma^2} \sum_{i=1}^{6} B_i \right\rbrace  ~\big|~ { \left\| W^{\sigma} \right\|_{\alpha} \leq \frac{\epsilon}{\gamma}} \right) \times \mathbb{P}( \left\| W^{\sigma} \right\|_{\alpha} \leq \frac{\epsilon}{\gamma} ),
		\end{aligned}
	\end{equation}
	where $B_i$ represents the deviations in the path arising from drift and perturbations, it exhibits stochastic properties. This is elaborated upon in the following detailed expressions:
	\begin{displaymath}
		\begin{aligned}
			B_1 &=  \gamma \int_0^1 \left\langle \sigma_q^{-1}(s) \frac{\partial H}{\partial y_p}(y_q, y_p) , \,{\rm d}W_q(s) \right\rangle -  \gamma \int_0^1 \left\langle \sigma_p^{-1}(s)  \frac{\partial H}{\partial y_q}(y_q, y_p), \,{\rm d}W_p(s) \right\rangle,\\
			B_2 &= - \gamma \int_0^1 \left\langle \sigma_q^{-1}(s) \dot{\varphi}_q(s), \,{\rm d}W_q(s) \right\rangle - \gamma \int_0^1 \left\langle \sigma_p^{-1}(s) \dot{\varphi}_p(s), \,{\rm d}W_p(s) \right\rangle,\\
			B_3 &= \frac{1}{2} \int_0^1 \left| \sigma_q^{-1}(s) \frac{\partial H}{\partial \varphi_p}(\varphi_q, \varphi_p)  \right|^2 \,{\rm d}s - \frac{1}{2} \int_0^1 \left| \sigma_q^{-1}(s)  \frac{\partial H}{\partial y_p}(y_q, y_p) \right|^2 \,{\rm d}s,\\
			B_4 &= \frac{1}{2} \int_0^1 \left| \sigma_p^{-1}(s) \frac{\partial H}{\partial \varphi_q}(\varphi_q, \varphi_p)  \right|^2 \,{\rm d}s - \frac{1}{2} \int_0^1 \left| \sigma_p^{-1}(s)  \frac{\partial H}{\partial y_q}(y_q, y_p) \right|^2 \,{\rm d}s,\\
			B_5 &= \int_0^1 \left\langle \sigma_q^{-1}(s) \left( \frac{\partial H}{\partial y_p}(y_q, y_p) - \frac{\partial H}{\partial \varphi_p}(\varphi_q, \varphi_p) \right), \sigma_q^{-1}(s) \dot{\varphi}_q(s) \right\rangle \,{\rm d}s,\\
			B_6 &= - \int_0^1 \left\langle \sigma_p^{-1}(s) \left( \frac{\partial H}{\partial y_q}(y_q, y_p) - \frac{\partial H}{\partial \varphi_q}(\varphi_q, \varphi_p) \right), \sigma_p^{-1}(s) \dot{\varphi}_p(s) \right\rangle \,{\rm d}s.
		\end{aligned}
	\end{displaymath}

	We decompose the system's probability into a deterministic part
	\begin{displaymath}
		\begin{aligned}
			\exp \left\lbrace -\frac{1}{2\gamma^2}\left( \int_0^1 \left| \sigma_q^{-1}(s) \left( \dot{\varphi}_q - \frac{\partial H}{\partial \varphi_p}(\varphi_q, \varphi_p) \right) \right|^2\, {\rm d}s + \int_0^1 \left| \sigma_p^{-1}(s) \left( \dot{\varphi}_p + \frac{\partial H}{\partial \varphi_q}(\varphi_q, \varphi_p) \right) \right|^2\, {\rm d}s \right) \right\rbrace
		\end{aligned}
	\end{displaymath}
	a correction part
	\begin{displaymath}
		\mathbb{E}\left( \exp\left\lbrace \frac{1}{\gamma^2} \sum_{i=1}^{6} B_i ~\big|~ {\left\|W^{\sigma}\right\|_{\alpha} \leq \frac{\epsilon}{\gamma}} \right\rbrace \right),
	\end{displaymath}
	and small ball probabilities for Brownian motion
	\begin{displaymath}
		\mathbb{P}\left(  \left\| W^{\sigma} \right\|_{\alpha} \leq \frac{\epsilon}{\gamma}\right).
	\end{displaymath}
	Specifically, the deterministic component represents the behavior along the deterministic trajectory, while the correction component accounts for deviations induced by stochastic perturbations. The small ball probabilities for Brownian motion represent fixed values that are intrinsically linked to the diffusion coefficients $\sigma_q$, $\sigma_p$, and the duration of time, yet they are independent of the variable $\varphi$. Furthermore, for convenience, we introduce the notation \( U(\varphi) \) defined as
	\[
	U(\varphi) := \left| \sigma_{q}^{-1}(t) \left( \dot{\varphi}_{q} - \frac{\partial H}{\partial \varphi_{p}}(\varphi_{q}, \varphi_{p}) \right) \right|^{2} + \left| \sigma_{p}^{-1}(t) \left( \dot{\varphi}_{p} + \frac{\partial H}{\partial \varphi_{q}}(\varphi_{q}, \varphi_{p}) \right) \right|^{2}.
	\]
	
	Large deviation theory focuses on large-scale deviations, and in this context, we are particularly concerned with the \(\frac{1}{\gamma^2}\) scale. Given the boundedness of \(\sigma_{q}^{-1}(s)\) and \(\sigma_{p}^{-1}(s)\), along with the fact that \(H \in C^3_b(\mathbb{R}^n \times \mathbb{R}^n, \mathbb{R})\), we can infer that
	\begin{equation}\label{5.3}
		\limsup\limits_{\gamma \to 0} \mathbb{E}\left({\rm exp}\left\{ cB_{1} \right\} ~\big|~ { \left\| W^{\sigma} \right\|_{\alpha} \leq \frac{\epsilon}{\gamma}} \right) = \exp \left\lbrace O(\epsilon)  \right\rbrace,
	\end{equation}
	and
	\begin{equation}\label{5.4}
		\limsup\limits_{\gamma \to 0} \mathbb{E}\left({\rm exp}\left\{ cB_{2} \right\} ~\big|~ { \left\| W^{\sigma} \right\|_{\alpha} \leq \frac{\epsilon}{\gamma}} \right) = \exp \left\lbrace O(\epsilon)  \right\rbrace
	\end{equation}
	for all $c \in \mathbb{R}$.
	
	For the third term $B_3$, 
	\begin{displaymath}
		\begin{aligned}
			B_3 &= \frac{1}{2} \int_0^1 \left| \sigma_q^{-1}(s) \frac{\partial H}{\partial \varphi_p}(\varphi_q, \varphi_p)  \right|^2 \,{\rm d}s - \frac{1}{2} \int_0^1 \left| \sigma_q^{-1}(s)  \frac{\partial H}{\partial y_p}(y_q, y_p) \right| ^2 \,{\rm d}s
			\\ &\leq \frac{1}{2} \int_{0}^{1} {\left| \left\| \sigma_q^{-1}(s)\right\| \right|^2 \left|  \frac{\partial H}{\partial \varphi_p}(\varphi_q, \varphi_p) - \frac{\partial H}{\partial y_p}(y_q, y_p) \right|^2} \,{\rm d}s\\
			& \quad + \int_{0}^{1} {\left| \left\| \sigma_q^{-1}(s)\right\| \right|^2 \left|\frac{\partial H}{\partial \varphi_p}(\varphi_q, \varphi_p) - \frac{\partial H}{\partial y_p}(y_q, y_p) \right| \left| \frac{\partial H}{\partial y_p}(y_q, y_p) \right|} \,{\rm d}s.
		\end{aligned}
	\end{displaymath}
	Using that $\frac{\partial H}{\partial p}$ is Lipschitz continuous, we have
	\begin{equation}\label{5.5}
		\begin{aligned}
			&\quad \left| \frac{\partial H}{\partial y_p}(y_q, y_p)  -  \frac{\partial H}{\partial \varphi_p}(\varphi_q, \varphi_p)\right|\\
			&= \left| \frac{\partial H}{\partial \left( \varphi_p + \gamma W^{\sigma}_p \right)}((\varphi_q + \gamma W^{\sigma}_q), (\varphi_p + \gamma W^{\sigma}_p))-  \frac{\partial H}{\partial \varphi_p}(\varphi_q, \varphi_p)\right|\\
			&\leq L \gamma \left\| W^{\sigma} \right\|_{\alpha}.
		\end{aligned}
	\end{equation}
	Inequality $\eqref{5.5}$ and the boundedness of $\frac{\partial H}{\partial y_p}(y_q, y_p)$ and $\sigma_q^{-1}(t)$ imply that
	\begin{equation}\label{5.6}
		\limsup\limits_{\gamma \to 0} \mathbb{E}\left({\rm exp}\left\{ cB_3 \right\} ~\big|~ { \left\| W^{\sigma} \right\|_{\alpha} \leq \frac{\epsilon}{\gamma}} \right) = \exp \left\lbrace O(\epsilon)  \right\rbrace
	\end{equation}
	for all $c \in \mathbb{R}$.
	
	For the fourth term $B_4$, employing the same proof technique as for the third term $B_3$, we have
	\begin{equation}\label{5.7}
		\limsup\limits_{\gamma \to 0} \mathbb{E}\left({\rm exp}\left\{ cB_4 \right\} ~\big|~ { \left\| W^{\sigma} \right\|_{\alpha} \leq \frac{\epsilon}{\gamma}} \right) = \exp \left\lbrace O(\epsilon)  \right\rbrace
	\end{equation}
	for all $c \in \mathbb{R}$.
	
	For the fifth term $B_5$, applying inequality $\eqref{5.5}$ and the boundedness of $\dot{\varphi}_\theta(t)$ and $\sigma_\theta^{-1}(t)$, we have
	\begin{displaymath}
		\begin{aligned}
			B_5 &= \int_0^1 \left\langle \sigma_q^{-1}(s) \left( \frac{\partial H}{\partial y_p}(y_q, y_p) - \frac{\partial H}{\partial \varphi_p}(\varphi_q, \varphi_p) \right), \sigma_q^{-1}(s) \dot{\varphi}_q(s) \right\rangle \,{\rm d}s\\
			& \leq CL\gamma \left\| W^{\sigma} \right\|_{\alpha}.
		\end{aligned}
	\end{displaymath}
	Thus,
	\begin{equation}\label{5.8}
		\limsup\limits_{\gamma \to 0} \mathbb{E}\left({\rm exp}\left\{ cB_5 \right\} ~\big|~ { \left\| W^{\sigma} \right\|_{\alpha} \leq \frac{\epsilon}{\gamma}} \right) = \exp \left\lbrace O(\epsilon)  \right\rbrace
	\end{equation}
	for all $c \in \mathbb{R}$.
	
	For the sixth term $B_6$, employing the same proof technique as for the fifth term $B_5$, we have
	\begin{equation}\label{5.9}
		\limsup\limits_{\gamma \to 0} \mathbb{E}\left({\rm exp}\left\{ cB_6 \right\} ~\big|~ { \left\| W^{\sigma} \right\|_{\alpha} \leq \frac{\epsilon}{\gamma}} \right) = \exp \left\lbrace O(\epsilon)  \right\rbrace
	\end{equation}
	for all $c \in \mathbb{R}$.

	Based on the outcomes derived from equations $\eqref{52}$-$\eqref{5.4}$ and $\eqref{5.6}$-$\eqref{5.9}$, we can ascertain the explicit form of the rate function by utilizing both the upper and lower bounds of the probability estimate. Here, we solely focus on the scenario where $ \varphi(0) = x_0 $ and $\varphi - x_0 \in \mathbb{H}^{1}_0$, and for the remaining scenarios, we directly assign $J = \infty$.
	
	On one hand, we estimate the upper bound. Let $\mathbb{F}$ be a closed subset of
	$C^\alpha([0,1];\mathbb{R}^n \times \mathbb{R}^n)$. By exponential tightness of $\{X^\gamma\}_{\gamma>0}$
	in $C^\alpha([0,1];\mathbb{R}^n \times \mathbb{R}^n)$ (see~[\cite{36}]), for every $K>0$ there exists a compact set
	$\mathbb{K}_K\subset C^\alpha([0,1];\mathbb{R}^n \times \mathbb{R}^n)$ such that
	\begin{equation}\label{5.10}
		\limsup_{\gamma\to0}\gamma^2\log\mathbb{P}\left( X^\gamma(\cdot)\notin \mathbb{K}_K\right) \le -K .
	\end{equation}
	Hence,
	\begin{equation}\label{5.11}
		\mathbb{P}\big(X^\gamma(\cdot)\in\mathbb{F}\big)
		\le \mathbb{P}\big(X^\gamma(\cdot)\in\mathbb{F}\cap \mathbb{K}_K\big)
		+\mathbb{P}\big(X^\gamma(\cdot)\notin \mathbb{K}_K\big).
	\end{equation}
	
	Fix $K>0$ and $\epsilon>0$. Since $\mathbb{F}\cap \mathbb{K}_K$ is compact, there exist
	$\{\varphi_j\}_{j=1}^{N(\epsilon,K)}\subset \mathbb{F}\cap \mathbb{K}_K$ such that
	\[
	\mathbb{F}\cap \mathbb{K}_K \subset \bigcup_{j=1}^{N(\epsilon,K)} \mathbb{K}(\varphi_j,\epsilon).
	\]
	By the union bound,
	\[
	\mathbb{P}\big(X^\gamma(\cdot)\in\mathbb{F}\cap \mathbb{K}_K\big)
	\le \sum_{j=1}^{N(\epsilon,K)} \mathbb{P}\big(X^\gamma(\cdot)\in \mathbb{K}(\varphi_j,\epsilon)\big).
	\]
	Taking logarithms and multiplying by $\gamma^2$, we obtain
	\[
	\gamma^2\log \mathbb{P}\big(X^\gamma(\cdot)\in\mathbb{F}\cap \mathbb{K}_K\big)
	\le \gamma^2\log N(\epsilon,K)
	+\max_{1\le j\le N(\epsilon,K)} \gamma^2\log \mathbb{P}\big(X^\gamma(\cdot)\in \mathbb{K}(\varphi_j,\epsilon)\big).
	\]
	Since $N(\epsilon,K)$ does not depend on $\gamma$, we have $\gamma^2\log N(\epsilon,K)\to0$ as $\gamma\to0$.
	Using the local estimate for $\mathbb{P}(X^\gamma(\cdot)\in \mathbb{K}(\varphi_j,\epsilon))$ derived above, we get
	\[
	\limsup_{\gamma\to0}\gamma^2\log \mathbb{P}\big(X^\gamma(\cdot)\in\mathbb{F}\cap \mathbb{K}_K\big)
	\le -\inf_{\varphi\in\mathbb{F}\cap \mathbb{K}_K}\frac12\int_0^1 U(\varphi)\,\mathrm{d}s + O(\epsilon).
	\]
	Letting $\epsilon \to 0$ yields
	\[
	\limsup_{\gamma\to0}\gamma^2\log \mathbb{P}\big(X^\gamma(\cdot)\in\mathbb{F}\cap \mathbb{K}_k\big)
	\le -\inf_{\varphi\in\mathbb{F}\cap \mathbb{K}_k}\frac12\int_0^1 U(\varphi)\,\mathrm{d}s .
	\]
	
	For any nonnegative numbers $a_\gamma,b_\gamma$,
	\[
	\log(a_\gamma+b_\gamma)\le \log 2+\max\{\log a_\gamma,\log b_\gamma\}.
	\]
	Consequently, 
	\[
	\limsup_{\gamma\to0}\gamma^2\log(a_\gamma+b_\gamma)
	\le \max\left\{\limsup_{\gamma\to0}\gamma^2\log a_\gamma,\,
	\limsup_{\gamma\to0}\gamma^2\log b_\gamma\right\}.
	\]
	Combining \eqref{5.11} with the elementary bound, we obtain
	\[
	\limsup_{\gamma\to0}\gamma^2\log \mathbb{P}\big(X^\gamma(\cdot)\in\mathbb{F}\big)
	\le \max\left\{
	\limsup_{\gamma\to0}\gamma^2\log \mathbb{P}\big(X^\gamma(\cdot)\in\mathbb{F}\cap \mathbb{K}_K\big),\,
	\limsup_{\gamma\to0}\gamma^2\log \mathbb{P}\big(X^\gamma(\cdot)\notin \mathbb{K}_K\big)
	\right\}.
	\]
	Using \eqref{5.10} and the estimate on $\mathbb{F}\cap \mathbb{K}_K$, we conclude that for every $K>0$,
	\[
	\limsup_{\gamma\to0}\gamma^2\log \mathbb{P}\big(X^\gamma(\cdot)\in\mathbb{F}\big)
	\le \max\left\{
	-\inf_{\varphi\in\mathbb{F}\cap \mathbb{K}_K}\frac12\int_0^1 U(\varphi)\,\mathrm{d}s,\,-K
	\right\}.
	\]
	Since $\inf_{\mathbb{F}\cap \mathbb{K}_K}(\cdot)\ge \inf_{\mathbb{F}}(\cdot)$, letting $K \to \infty$ yields
	\begin{equation}\label{5.12}
		\limsup_{\gamma\to0}\gamma^2\log \mathbb{P}\big(X^\gamma(\cdot)\in\mathbb{F}\big)
		\le -\inf_{\varphi\in\mathbb{F}}\frac12\int_0^1 U(\varphi)\,\mathrm{d}s.
	\end{equation}

	On the other hand, we estimate the lower bound. Let $\mathbb{G}$ be an arbitrary open set in
	$C^\alpha([0,1];\mathbb{R}^n\times\mathbb{R}^n)$. Fix $\varphi\in\mathbb{G}$.
	Since $\mathbb{G}$ is open in the $C^\alpha$-topology, there exists $\epsilon_0>0$ such that
	$\mathbb{K}(\varphi,\epsilon)\subset \mathbb{G}$ for all $\epsilon\in(0,\epsilon_0)$. Hence, for any $\epsilon\in(0,\epsilon_0)$,
	\[
	\mathbb{P}\big(X^\gamma(\cdot)\in\mathbb{G}\big)\ge
	\mathbb{P}\big(X^\gamma(\cdot)\in\mathbb{K}(\varphi,\epsilon)\big).
	\]
	Taking $\liminf\limits_{\gamma\to0}\gamma^2\log(\cdot)$ and using the local lower bound proved above, we obtain
	\[
	\liminf_{\gamma\to0}\gamma^2\log\mathbb{P}\big(X^\gamma(\cdot)\in\mathbb{G}\big)
	\ge -\frac12\int_0^1 U(\varphi)\,ds - O(\epsilon),
	\]
	where $O(\epsilon) \to 0$ as $\epsilon \to 0$. Since $\varphi\in\mathbb{G}$ is arbitrary, letting $\epsilon\to0$ yields
	\begin{equation}\label{5.13}
		\liminf_{\gamma\to0}\gamma^2\log\mathbb{P}\big(X^\gamma(\cdot)\in\mathbb{G}\big)
		\ge -\inf_{\varphi\in\mathbb{G}}\frac12\int_0^1 U(\varphi)\,\mathrm{d}s.
	\end{equation}

	By combining the upper bound inequality \eqref{5.12} and the lower bound inequality \eqref{5.13}, we can derive the rate function for the stochastic Hamiltonian system
	\begin{displaymath}
		\begin{aligned}
			J(\varphi) &= \frac{1}{2}\left( \int_0^1 \left| \sigma_q^{-1}(s) \left( \dot{\varphi}_q - \frac{\partial H}{\partial \varphi_p}(\varphi_q, \varphi_p) \right) \right|^2\, {\rm d}s + \int_0^1 \left| \sigma_p^{-1}(s) \left( \dot{\varphi}_p + \frac{\partial H}{\partial \varphi_q}(\varphi_q, \varphi_p) \right) \right|^2\, {\rm d}s \right).
		\end{aligned}
	\end{displaymath}
\end{proof}

\section{Preservation of Invariant Tori in Nearly Integrable Stochastic Hamiltonian Systems}
This section is devoted to proving Theorem $\ref{T6.1}$. Working within the nearly integrable regime, we combine the Onsager-Machlup functional, the characterization of the most probable path, and the Freidlin-Wentzell large deviation principle established in Sections 3-5 with the deterministic KAM mechanism. This synthesis yields the persistence of invariant tori under small stochastic perturbations in the most probable paths sense. To avoid an overload of notation, we adopt throughout the conventions, function spaces, and assumptions introduced at the beginning of this section. The results stated below hold on any finite time interval $[0,T]$. For simplicity of notation, we set the time horizon to be $[0,1]$.

\subsection{Some necessary notations and conditions}
Firstly, we introduce some notations that will be used throughout this section:
\begin{itemize}
	\item For positive integer \( n \in \mathbb{N} \) and \( x, y \in \mathbb{C}^n \), the standard inner product is defined as
	\[
	x \cdot y := \sum_{j=1}^d x_j \bar{y}_j,
	\]
	where \( \bar{y}_j \) is the complex conjugate of \( y_j \). Two common norms on \( \mathbb{C}^n \):
	1-norm: \( |x|_1 := \sum_{j=1}^n |x_j| \);
	sup-norm (\( \ell^\infty \)-norm): \( |x| := \max_{1 \leq j \leq d} |x_j| \).
	\item The $n$-dimensional torus is denoted by $\mathbb{T}^n := \mathbb{R}^n / 2\pi\mathbb{Z}^n$.
	\item For $\alpha > 0$ and $\tau \geq n - 1 \geq 1$, the set of $(\alpha, \tau)$-Diophantine numbers in $\mathbb{R}^n$ is defined as
	\[
	\Delta_{\alpha}^{\tau} := \left\{\omega \in \mathbb{R}^n : |\omega \cdot k| \geq \frac{\alpha}{|k|_1^{\tau}}, ~\forall~ 0 \neq k \in \mathbb{Z}^n \right\}.
	\]
	\item The Lebesgue (outer) measure on $\mathbb{R}^n$ is denoted by $meas$.
	\item For $l \in \mathbb{R}$, the integer part is denoted by $[l]$ and the fractional part by $\{l\}$.
	\item For $l > 0$ and an open subset $A$ of $\mathbb{R}^n$ or $\mathbb{R}^n \times \mathbb{T}^n$, the set $C^l(A)$ consists of continuously differentiable functions $f$ on $A$ up to the order $[l]$ such that $f^{[l]}$ is Hölder-continuous with exponent $\{l\}$ and with finite $C^l$-norm defined by
	\[
	\begin{aligned}
		\|f\|_{C^l(A)} &:= \max\left\{\|f\|_{C^{[l]}(A)}, \|f^{[l]}\|_{C^{\{l\}}(A)}\right\}, \\
		\|f\|_{C^{[l]}(A)} &:= \max_{\substack{k \in \mathbb{N}^d \\ 0 \leq |k|_1 \leq [l]}} \sup_{A} |\partial_I^k f|, \\
		\|f^{[l]}\|_{C^{\{l\}}(A)} &:= \max_{\substack{k \in \mathbb{N}^d \\ |k|_1 = [l]}} \sup_{\substack{I_1, I_2 \in A \\ 0 < |I_1 - I_2| < 1}} \frac{|\partial_I^k f(I_1) - \partial_I^k f(I_2)|}{|I_1 - I_2|^{\{l\}}}.
	\end{aligned}
	\]
	When $A = \mathbb{R}^n$ or $A = \mathbb{R}^n \times \mathbb{T}^n$, we simplify this to $\|f\|_{C^l}$.
	\item For $l > 0$ and any subset $A$ of $\mathbb{R}^n$, $C_W^l(A)$ denotes the set of functions of class $C^l$ on $A$ in the sense of Whitney.
	\item For $r, s > 0$, $y_0 \in \mathbb{C}^n$, and $\varnothing \neq \mathbb{D} \subseteq \mathbb{C}^n$, we define
	\[
	\begin{split}
		\mathbb{T}_s^n &:= \left\{x \in \mathbb{C}^n : |\text{Im }x| < s\right\} / 2\pi\mathbb{Z}^n, \\
		B_r(y_0) &:= \left\{y \in \mathbb{R}^n : |y - y_0| < r\right\}, \quad (y_0 \in \mathbb{R}^n), \\
		D_r(y_0) &:= \left\{y \in \mathbb{C}^n : |y - y_0| < r\right\}, \\
		D_{r, s}(y_0) &:= D_r(y_0) \times \mathbb{T}_s^n, \\
		D_{r, s}(\mathbb{D}) &:= \bigcup_{y_0 \in \mathscr{D}} D_{r, s}(y_0).
	\end{split}
	\]
	\item The unit $(n \times n)$ matrix is denoted by $\mathbb{I}_n := \text{diag}(1)$, and the standard symplectic matrix is given by
	\[
	J := \begin{pmatrix} 0 & -\mathbb{I}_n \\ \mathbb{I}_n & 0 \end{pmatrix}.
	\]
	\item For $\mathbb{D} \subseteq \mathbb{C}^n$, $A_{r, s}(\mathbb{D})$ denotes the Banach space of real-analytic functions with bounded holomorphic extensions to $D_{r, s}(\mathbb{D})$, with norm
	\[
	\|\cdot\|_{r, s, \mathbb{D}} := \sup_{D_{r, s}(\mathbb{D})} |\cdot|.
	\]
	\item The canonical symplectic form on $\mathbb{C}^n \times \mathbb{C}^n$ is given by
	\[
	\omega := \mathrm{d}\theta \land \mathrm{d}I = \mathrm{d}\theta_1 \land \mathrm{d}I_1 + \cdots + \mathrm{d}\theta_n \land \mathrm{d}I_n,
	\]
	and $\phi_H^t$ denotes the associated Hamiltonian flow governed by the Hamiltonian $H(I, \theta)$, $I, \theta \in \mathbb{C}^n$.
	\item The projections on the first and last $d$-components are denoted by $\pi_1 : \mathbb{C}^n \times \mathbb{C}^n \ni (I, \theta) \longmapsto I$ and $\pi_2 : \mathbb{C}^n \times \mathbb{C}^n \ni (I, \theta) \longmapsto \theta$, respectively.
	\item For a linear operator $\mathcal{L}$ from the normed space $(V_1, \|\cdot\|_{V_1})$ into the normed space $(V_2, \|\cdot\|_{V_2})$, its "operator norm" is given by
	\[
	\|\mathcal{L}\| := \sup_{x \in V_1 \setminus \{0\}} \frac{\|\mathcal{L}x\|_{V_2}}{\|x\|_{V_1}},
	\]
	so that $\|\mathcal{L}x\|_{V_2} \leq \|\mathcal{L}\| \|x\|_{V_1}$ for any $x \in V_1$.
	\item For $\omega \in \mathbb{R}^n$ and a $C^1$ function $f$, the directional derivative of $f$ with respect to $\omega$ is given by
	\[
	\mathrm{D}_{\omega} f := \omega \cdot f_I = \sum_{j = 1}^{n} \omega_j f_{I_j}.
	\]
	\item If $f$ is a smooth or analytic function on $\mathbb{T}^n$, its Fourier expansion is given by
	\[
	f = \sum_{k \in \mathbb{Z}^n} f_k e^{i k \cdot \theta}, \quad f_k := \frac{1}{(2\pi)^n} \int_{\mathbb{T}^n} f(\theta) e^{-i k \cdot \theta} \,\mathrm{d}\theta,
	\]
	where $e := \exp(1)$ denotes the Neper number and $i$ the imaginary unit. We also set
	\[
	\langle f\rangle := f_0 = \frac{1}{(2\pi)^n} \int_{\mathbb{T}^n} f(\theta) \,\mathrm{d}\theta.
	\]
\end{itemize}

Moreover, in order to carry out the KAM iteration, we additionally require the following necessary conditions:
\begin{enumerate}
	\item[(H1)] Let \(l > 2\nu := 2(\tau + 1)>2n \geq 4\), and let \(\mathbb{D} \subset \mathbb{R}^n\) be a non-empty, bounded domain.
	\item[(H2)] Consider the Hamiltonian \(H(I, \theta) = H_0(I) + P(I, \theta)\) on the phase space \(\mathbb{D} \times \mathbb{T}^n\). Here, \(H\) and \(P\) are given functions in \(C^l(\mathbb{D} \times \mathbb{T}^n)\) with finite \(l\) - norms \(\|H_0\|_{C^l(\mathbb{D})}\) and \(\eta := \|P\|_{C^l(\mathbb{D} \times \mathbb{T}^n)}\).
	\item[(H3)] Assume that \(H_{0,I}\) is locally uniformly invertible. This implies that for all \(I \in \mathbb{D}\), \(\det H_{0,II}(I) \neq 0\). To simplify notation, define \(T(I) := H_{0,II}(I)^{-1}\), and suppose that \(C_T := \|T\|_{C^0(\mathbb{D})} < \infty\). Furthermore, set \(C_H := \max\left\{1, \|H_{0,II}\|_{C^l(\mathbb{D})}\right\}\), and then define \(\theta := C_T C_H\), which possesses the property that \(\theta \geq 1\).
	\item[(H4)] Let $\alpha \in (0, 1)$ and set,
	\[
	\quad \alpha_{*} := \alpha^{\frac{1}{l-2v}}, \quad
	\mathbb{D}' := \{ I \in \mathbb{D} : B_{\alpha_{*}}(I) \subseteq  \mathbb{D} \}, \quad
	\mathbb{D}_{\alpha} := \{ I \in \mathbb{D}' : H_I(I) \in \Delta^{\tau}_{\alpha} \}.
	\]
	\item[(H5)] Finally, for some suitable constant \(C_1 = C_1(n,l)>1\), set
	\[
	\begin{cases}
		\sigma:=\left(\frac{\eta^{3/2}}{\theta^{2l/\nu}\alpha\sqrt{C_H}}\right)^{1/(l+\nu)},\\
		\rho:=\frac{2C_1 C_H \eta}{\alpha^{2}\sigma^{2\nu}},\\
		\beta_0:=\min\left\{\frac{l}{2\nu}-1+\frac{1}{\nu},2\right\}.
	\end{cases}
	\]
\end{enumerate}

\subsection{Proof of the Theorem \ref{T6.1}}
\begin{proof}
	Under Conditions (C1) and (C2), the Onsager-Machlup functional associated with system $\eqref{1.1}$ follows directly from Theorem \ref{T3.1}. Minimizing this functional shows that the most probable continuous path of the nearly integrable stochastic Hamiltonian system coincides with the solution to the corresponding deterministic nearly integrable Hamiltonian system $\eqref{1.3}$. Since the Hamiltonian admits a nearly integrable structure and the coordinate representations $(q,p)$ and the action--angle variables $(I,\theta)$ are related through a canonical transformation, the argument is essentially the same as in Sections 3 and 4. We therefore omit further details here.
	
	Next, we will outline the framework for proving the existence of invariant tori and the measure of Cantor sets, with detailed proofs referred to in \cite{6}. This commences by extending \(H_0\) and \(P\) to the entirety of the phase space \(\mathbb{R}^n \times \mathbb{T}^n\). To accomplish this extension, we introduce a cut-off function \(\chi \in C(\mathbb{C}^n) \cap C^{\infty}(\mathbb{R}^n)\) that fulfills the conditions \(0 \leq \chi \leq 1\), with its support confined within \(D_{\alpha_*}(\mathbb{D}')\) and \(\chi\) being identically equal to 1 on \(D_{\alpha_*/2}(\mathbb{D}')\). Additionally, for any multi-index \(k \in \mathbb{N}^n\) with \(|k|_1 \leq l\), there exists a constant \(C_0 = C_0(n, l) > 0\) such that
	\[
	\|\partial_y^k\chi\|_{\mathbb{R}^n} \leq C_0\alpha_*^{-|k|_1}.
	\]
	
	Utilizing Faà Di Bruno's Formula \cite{49}, we construct \(\widehat{H}_0 \in C^l(\mathbb{R}^n)\) such that \(\|T\|_{\mathbb{D}}\|\widehat{H}_0 - H_0\|_{C^l(\mathbb{D})} \leq C_1^{-1}\alpha_*^l/4\). Consequently, \(\widehat{H}_{0,II} = H_{0,II}(\mathbb{I}_n + \mathcal{T}(\widehat{H}_{0,II} - H_{0,II}))\) is invertible on \(\mathbb{D}\) with \(\|(\widehat{H}_{0,II})^{-1}\|_{\mathbb{D}} \leq 2\|\mathcal{T}\|_{\mathbb{D}}\).
	
	Defining \(\tilde{H}_0 := \widehat{H}_0 + \chi \cdot (H_0 - \widehat{H}_0)\), we ensure \(\tilde{H}_0 \in C^l(\mathbb{R}^n \times \mathbb{T}^n)\) and \(\tilde{H}_0 \equiv H_0\) on \(D_{\alpha_*/2}(\mathbb{D}')\). Furthermore,
	\[
	\|\tilde{H}_0\|_{C^l} \leq \|H_0\|_{C^l} + C_1\alpha_*^{-l}\|\widehat{H}_0 - H_0\|_{C^l} < 2\|H_0\|_{C^l},
	\]
	and
	\[
	\|(\widehat{H}_{0,II})^{-1}\partial_y^2(\chi \cdot (H_0 - \widehat{H}_0))\|_{\mathbb{D}} \leq \|(\widehat{H}_{0,II})^{-1}\|_{\mathbb{D}} \cdot C_1\alpha_*^{-l}\|\widehat{H}_0 - H_0\|_{C^l} \leq 1/2.
	\]
	Thus, \(\tilde{H}_{0,II}\) is invertible with \(\|(\tilde{H}_{0,II})^{-1}\|_{\mathbb{D}} \leq 2\|(\widehat{H}_{0,II})^{-1}\|_{\mathbb{D}} \leq 4\|T\|_{\mathbb{D}}\).
	
	Similarly, \(P\) is extended to a function \(\tilde{P} \in C^l(\mathbb{R}^n \times \mathbb{T}^n)\) such that \(\tilde{P} \equiv P\) on \(D_{\alpha_*/2}(\mathbb{D}')\) and \(\|\tilde{P}\|_{C^l} \leq 2\|P\|_{C^l}\). Setting \(\tilde{H} := \tilde{H}_0 +  \tilde{P}\), we observe \(\tilde{H}|_{D_{\alpha_*/2}(\mathbb{D}')} = {H}\). Notably, replacing \({H}\) with \(\tilde{H}\) makes no difference since the invariant tori of \(\tilde{H}\) we aim to construct reside within \(D_{\alpha_*/2}(\mathbb{D}')\), given \(r_0 < \alpha_*/2\). 
	
	Let \( \mathbb{D} \subset \mathbb{R}^n \) be a domain with a smooth boundary \( \partial \mathbb{D} \). Suppose there exists a positive constant \( c = c(n, \tau, l) < 1 \) such that the parameters \( \alpha \) and \( \gamma_1 \) satisfy the following conditions
	\[
	0 < \alpha \leq \min\left\{c C_H, \frac{R(\mathbb{D})}{6}, \frac{1}{2} \, \mathrm{minfoc}(\partial \mathbb{D})\right\},\quad \gamma_1 \leq c C_H^{-\frac{l+2v}{l-2v}} \theta^{-a} \alpha^{\frac{2l}{l-2v}},
	\]
	where \( R(\mathbb{D}) := \sup\{R > 0 : B_R(I) \subseteq \mathbb{D} \text{ for some } I \in \mathbb{D}\} \), \( \mathrm{minfoc}(\partial \mathbb{D}) \) is the minimal focal distance of \( \partial \mathbb{D} \) and \( a := (l - 2v)^{-1} \max \left\{(6 + 2lv^{-1})(l + v) - 2l(l - v), \right. \)  \(\left. 2l(l + 3v)v^{-1}\right\} \). 
	
	Let \( \mathcal{H}_j \) (resp.\ \( \mathcal{P}_j \)) denote the real-analytic approximation \( H_{\xi_j} \) (resp.\ \( P_{\xi_j} \)) of \( \tilde{H} \) (resp.\ \( \tilde{P} \)) defined on \( \mathcal{O}_j \), as given by Lemma \ref{L2.15}. Define the initial set \( \mathbb{D}_0 \) as
	\[
	\mathbb{D}_0 := \{I \in \mathbb{R}^n : \partial_I \mathcal{H}_0(I) \in \partial_I H_0(\mathbb{D}_{\alpha})\}.
	\]
	For \( j \geq 1 \), we define the proposition \( (\mathscr{P}_j) \) as follows: there exist
	\begin{enumerate}
		\item A sequence of sets \( \mathbb{D}_j \),
		\item A sequence of diffeomorphisms \( G_j: D_{\tilde{r}_j}(\mathbb{D}_{j - 1}) \to G_j(D_{\tilde{r}_j}(\mathbb{D}_{j - 1})) \),
		\item A sequence of real-analytic symplectic transformations
		\[
		\Phi_j = (v_j, u_j): D_{r_j, s_j}(\mathbb{D}_j) \to D_{\sigma_{j - 1}, \sigma_{j - 1}}(\mathbb{D}_{j - 1}),
		\]
	\end{enumerate}
	such that, setting \( \mathcal{H}_{j - 1} := \mathcal{H}_{0,j - 1} + \mathcal{P}_{j - 1} \), the following properties hold:
	\begin{align*}
		&G_j(\mathbb{D}_{j - 1}) = \mathbb{D}_j \subset \mathbb{D}_{r_j}, \quad G_j = (\partial_I H_{0,j})^{-1} \circ \partial_I H_{0,j - 1},\\
		&\det \partial_I^2 H_{0,j}(y) \neq 0, \quad T_j(I) := \partial_I^2 H_{0,j}(I)^{-1}, \quad \forall I \in \mathbb{D}_j,\\
		&H_j := \mathcal{H}_{j - 1} \circ \phi^j := H_{0,j} + P_j \quad \text{on } D_{r_j, s_j}(\mathbb{D}_j),
	\end{align*}
	where \( \phi^j := \phi_1 \circ \phi_2 \circ \cdots \circ \phi_j \) and \( K_0 := \mathcal{K}_0 \).

	Moreover,
	\begin{align*}
		&\left\lVert G_j - \text{id} \right\rVert_{\tilde{r}_j, \mathbb{D}_{j - 1}} \leq \tilde{r}_j \xi^{2\nu} \xi^{m(j - 1)}, \quad  \left\lVert \partial_y G_j - \mathbb{I}_n \right\rVert_{\tilde{r}_j, \mathbb{D}_{j - 1}} \leq \xi^{2\nu} \xi^{2m(j - 1)},\\
		&\left\lVert \partial_I^2 H_j \right\rVert_{r_j, \mathbb{D}_j} < 2C_H,  \quad \left\lVert T_j \right\rVert_{\mathbb{D}_j} < 2C_T,  \quad  T_j := (\partial_I^2 H_j)^{-1}, \quad
		\left\lVert P_j \right\rVert_{r_j, s_j, \mathbb{D}_j} \leq C_1 C_H \xi_j^{l - 1},\\
		&\max \left\{ \left\lVert M_j (\phi_j - \text{id}) \right\rVert_{2r_j, s_j, \mathbb{D}_j}, \left\lVert \pi_2 \partial_x (\phi_j - \text{id}) \right\rVert_{2r_j, s_j, \mathbb{D}_j} \right\} \leq \xi^{2\nu} \xi^{m(j - 1)},
	\end{align*}
	where \(M_j := \text{diag}(r_j^{-1} \mathbb{I}_n, \sigma_j^{-1} \mathbb{I}_n)\).
	
	We will use mathematical induction to prove that the proposition \((\mathscr{P}_j)\) holds for all \(j \geq 1\). The proof of this part primarily relies on the application of Lemma \ref{L2.14}. It is straightforward to verify that Lemma \ref{L2.14} can be applied to \(\mathcal{H}_0\), which implies that the statement \((\mathscr{P}_1)\) holds.
	
	Next, we assume that \((\mathscr{P}_j)\) holds for some \(j \geq 1\) and proceed to prove \((\mathscr{P}_{j + 1})\). First, observe the following estimates:
	\begin{align*}
		&s_j + \frac{\sigma_{j - 1}}{3} = \frac{(12\xi + 1)\sigma_{j - 1}}{3} < \frac{2\sigma_{j - 1}}{3} < \frac{s_{j - 1}}{2},\\
		&2r_j + \frac{r_{j - 1}\sigma_{j - 1}}{3} < \frac{r_{j - 1}}{4} + \frac{r_{j - 1}}{6} < \frac{r_{j - 1}}{2},\\
		&2r_j + \frac{r_{j - 1}\sigma_{j - 1}}{3} < \sigma_0^{\nu}\xi^j + \frac{\sigma_{j - 1}}{3} = \sigma_0^{\tau}\sigma_j + \frac{\sigma_{j - 1}}{3} < \sigma_{j - 1}.
	\end{align*}
	These inequalities, combined with a symplectic change of coordinates \(\phi' = \text{id} + \tilde{\phi}: D_{\bar{r}/2, s'}(\mathbb{D}') \to D_{\bar{r} + r \sigma / 3, \bar{s}}(\mathbb{D})\) in Lemma \ref{L2.14}, imply that
	\begin{equation}\label{6.4}
		\phi_j(D_{r_j, s_j}(\mathbb{D}_j)) \subset D_{\sigma_{j - 1}, \sigma_{j - 1}}(\mathbb{D}_{j - 1}) \bigcap D_{r_{j - 1}/2, s_{j - 1}/2}(\mathbb{D}_{j - 1}).
	\end{equation}
	In particular, the real-analytic symplectic transformation
	\[
	\phi_j = (v_j, u_j): D_{r_j, s_j}(\mathbb{D}_j) \to D_{\sigma_{j - 1}, \sigma_{j - 1}}(\mathbb{D}_{j - 1}),
	\]
	exists. Furthermore, the inequality
	\[
	2r_{j + 1} < \frac{1}{4} \min \left\{ \frac{\alpha}{2d(2C_H)\kappa_j^{\nu}}, \check{r}_{j + 1} \right\}
	\]
	holds. Together with the definitions of the sequences of the various parameters, this ensures that condition \eqref{2.2} in Lemma \ref{L2.14} is satisfied for all \(j \geq 1\).

	Write
	\[
	\mathcal{H}_j := \mathcal{H}_{0,j} + \mathcal{P}_j = \mathcal{H}_{j - 1} + (\mathcal{H}_{0,j} - \mathcal{H}_{0,j - 1}) + (\mathcal{P}_j - \mathcal{P}_{j - 1})
	\]
	By the inductive assumption and \eqref{6.4}, we have
	\begin{align*}
		\mathcal{H}_j \circ \phi^j &= \mathcal{H}_{j - 1} \circ \phi^j + (\mathcal{H}_{0,j} - \mathcal{H}_{0,j - 1}) \circ v^j + (\mathcal{P}_j - \mathcal{P}_{j - 1}) \circ \phi^j \\
		&= H_{0,j} + P_j + (\mathcal{H}_{0,j} - \mathcal{H}_{0,j - 1}) \circ v_j + (\mathcal{P}_j - \mathcal{P}_{j - 1}) \circ \phi^j \\
		&= \mathcal{H}_0^{j} + \mathcal{P}^j, \quad \text{on } D_{r_j, s_j}(\mathbb{D}_j),
	\end{align*}
	where \(\mathcal{H}_0^j := H_{0,j}\) and \(\mathcal{P}^j := P_j + (\mathcal{H}_{0,j} - \mathcal{H}_{0,j - 1}) \circ \phi^j + (\mathcal{P}_j - \mathcal{P}_{j - 1}) \circ \phi^j\), with
	\begin{equation}\label{6.5}
		\left\lVert \partial_y^2 \mathcal{H}_0^j \right\rVert_{r_j, \mathbb{D}_j} < 2C_K, \qquad  \left\lVert (\partial_y^2 \mathcal{H}_0^j)^{-1} \right\rVert_{\mathbb{D}_j} < 2C_T,
	\end{equation}
	by the inductive assumption, provided \(\phi^j\) maps \(D_{r_j, s_j}(\mathbb{D}_j)\) into \(\mathcal{O}_j = \{ (I, \theta) \in \mathbb{C}^n \times \mathbb{C}^n : |\text{Im}(I, \theta)| < \xi_j \}\) i.e.
	\begin{equation}\label{6.6}
		\sup_{D_{r_j, s_j}(\mathscr{D}_j)} |\text{Im} \phi^j| \leq \frac{\xi_j}{2}.
	\end{equation}
	Hence,
	\begin{align*}
		\left\lVert \mathcal{P}^j \right\rVert_{r_j, s_j, \mathbb{D}_j} &\leq \left\lVert P_j \right\rVert_{r_j, s_j, \mathbb{D}_j} + \left\lVert (\mathcal{H}_{0,j} - \mathcal{H}_{0,j - 1}) \circ \phi^j \right\rVert_{r_j, s_j, \mathbb{D}_j} + \left\lVert (\mathcal{P}_j - \mathcal{P}_{j - 1}) \circ \phi^j \right\rVert_{r_j, s_j, \mathbb{D}_j} \\
		&\leq C_1 C_H \xi_{j - 1}^l + \left\lVert \mathcal{H}_{0,j} - \mathcal{H}_{0,j - 1} \right\rVert_{\xi_j} + \left\lVert \mathcal{P}_{0,j} - \mathcal{P}_{o,j - 1} \right\rVert_{\xi_j} \\
		&\leq C_1 C_H \xi_{j - 1}^l + C_1 C_H \xi_{j - 1}^l + C_1 \eta \xi_{j - 1}^l \\
		&\leq 3C_1 C_H \xi_{j - 1}^l.
	\end{align*}
	Thus, thanks to \eqref{6.5}, \(\mathcal{H}_j \circ \phi^j = \mathcal{H}_0^j + \mathcal{P}^j\) satisfies the assumptions in \eqref{2.1} with \(\eta \sim \left\lVert \mathcal{P}^j \right\rVert_{r_j, s_j, \mathbb{D}_j}\), \(r \sim r_j\), \(s \sim s_j\), \(\sigma \sim \sigma_j\), \(C_H \sim 2C_H\) as
	\[
	\partial_I^2 \mathcal{H}_0^j(\mathbb{D}_j) \stackrel{\text{def}}{=} \partial_I^2 H_{0,j}(G_j(\mathbb{D}_{j - 1})) = \partial_I^2 H_{0,j - 1}(\mathbb{D}_{j - 1}) = \cdots = \partial_I^2 H_{0,0}(\mathbb{D}_0) \subset \Delta_{\tau}^{\alpha}.
	\]
	Hence, in order to apply Lemma \ref{L2.14} to \(\mathcal{H}_j \circ \phi^j = \mathcal{H}_0^j + \mathcal{P}^j\), we only need to check \eqref{2.3}. Upon observation, we have
	\begin{align*}
		&r_j = r_0 \xi^{\nu j} \leq \frac{\alpha}{2C_H} \sigma_j^{\nu} \leq \alpha \sigma_j^{\nu} / \left\lVert \partial_I^2 \mathcal{H}_0^j \right\rVert_{r_j, s_j, \mathbb{D}_j},\\
		&\sigma_1^{-\nu} \frac{\left\lVert \mathcal{P}^1 \right\rVert_{r_1, s_1, \mathbb{D}_1}}{\alpha r_1} \rho^{-1} \leq C_2 \sigma_0^l \frac{C_H}{\eta \xi^{2\nu}} \leq 1 ,\\
		&3C_0 \frac{\theta C_T \left\lVert \mathcal{P}^1 \right\rVert_{r_1, s_1, \mathbb{D}_1}}{r_1 \check{r}_2 \bar{\sigma}_1} \leq C_2 \sigma_0^{l - 2\nu} \frac{\theta^{6 + m} (C_H)^2}{\alpha^2} \lambda^{2(\nu + m)} \leq \xi^{2\nu},
	\end{align*}
	and, for \(j \geq 2\),
	\begin{align*}
		&\sigma_j^{-\nu} \frac{\left\lVert \mathcal{P}^j \right\rVert_{r_j, s_j, \mathbb{D}_j}}{\alpha r_j} \rho^{-1} \leq C_2 \sigma_0^l \frac{C_H}{\eta} \xi^{(l - 2\nu)j - 2l} \leq C_2 \sigma_0^l \frac{C_H}{\eta} \xi^{-2l} \leq 1,\\
		&3C_0 \frac{\theta C_T \left\lVert \mathcal{P}^j \right\rVert_{r_j, s_j, \mathbb{D}_j}}{r_j \check{r}_{j + 1} \bar{\sigma}_j} \leq C_2 \sigma_0^{l - 2\nu} \frac{\theta^{4 + 2l / \nu} (C_H)^2}{\alpha^2} \lambda^{2l} \leq \xi^{2\nu}.
	\end{align*}
	Therefore, Lemma \ref{L2.14} applies to \(\mathcal{H}_j\) and yields the desired symplectic change of coordinates \(\phi_{j + 1}\).

	Furthermore, based on Lemmas \ref{L2.15} and \ref{L2.16}, we can obtain the convergence results for \( G^j \), \( P_j \), \( \phi^j \), and \( H_j \) as follows:
	\begin{itemize}
		\item The sequence \( G^j := G_j \circ G_{j - 1} \circ \cdots \circ G_2 \circ G_1 \) converges uniformly on \( \mathbb{D}_0 \) to a diffeomorphism \( G_*: \mathbb{D}_0 \to \mathbb{D}_* := G_*(\mathbb{D}_0) \subset \mathbb{D} \), and \( G_* \in C_W^1(\mathbb{D}_0) \).
		
		\item \( P_j \) converges uniformly to 0 on \( \mathbb{D}_* \times \mathbb{T}_{s_*}^n \) in the \( C_W^2 \) topology.
		
		\item \( \phi^j \) converges uniformly on \( \mathbb{D}_* \times \mathbb{T}^d \) to a symplectic transformation
		\[
		\phi_*: \mathbb{D}_* \times \mathbb{T}^n \stackrel{\text{into}}{\longrightarrow} \mathbb{D} \times \mathbb{T}^d,
		\]
		with \( \phi_* \in C_W^{\tilde{m}}(\mathbb{D}_* \times \mathbb{T}^n) \) and \( \phi_*(\cdot, \cdot) \in C^{\tilde{m}v}(\mathbb{T}^n) \), for any given \( y_* \in \mathbb{D}_* \).
		
		\item \( H_{0,j} \) converges uniformly on \( \mathbb{D}_* \) to a function \( H_* \in C_W^{2 + \tilde{m}}(\mathbb{D}_*) \), with
		\begin{align*}
			&\partial_{I_*} H_* \circ G_* = \partial_I \mathcal{H}_0, \qquad \qquad\qquad \text{on } \mathbb{D}_0, \\
			&H \circ \phi_*(I_*, x) = H_*(I_*), \qquad \forall (I_*, x) \in \mathbb{D}_* \times \mathbb{T}^n.
		\end{align*}
	\end{itemize}

	Based on the preceding proof, we can demonstrate that there exists a Cantor-like set \( \mathbb{D}_* \subset \mathbb{D} \), an embedding \( \phi_* = (v_*, u_*): \mathbb{D}_* \times \mathbb{T}^n \rightarrow \mathcal{H} := \phi_*(\mathbb{D}_* \times \mathbb{T}^n) \subset \mathbb{D} \times \mathbb{T}^n \) of class \( C_W^{\beta}(\mathbb{D}_* \times \mathbb{T}^n) \), and a function \( H_* \in C_W^2(\mathbb{D}_*, \mathbb{R}) \), such that \( H \circ \phi_*(I_*, \theta) = K_*(I_*, \theta) \) for all \( (I_*, \theta) \in \mathbb{D}_* \times \mathbb{T}^n \). The map \( \xi \mapsto \phi_*(I_*, \theta) \) is of class \( C^{\beta v}(\mathbb{T}^n) \) for any \( I_* \in \mathbb{D}_* \) (with \( v^{-1} < \beta < \beta_0 \)), and the map \( G^* := (\partial_{I_*} H_*)^{-1} \circ \partial_I H: \mathbb{D}_{\alpha} \rightarrow \mathbb{D}_* \) defines a lipeomorphism onto \( \mathbb{D}_* \), satisfying \( B_{\alpha_*/2}(\mathbb{D}_*) \subseteq \mathbb{D} \). The set \( \mathcal{K} \) is foliated by KAM tori of \( H \), each being a graph of a \( C^v(\mathbb{T}^n) \)-map.  
	
	Furthermore, the following estimates hold
	\[
	\|G^* - \mathrm{id}\|_{\mathcal{D}_*} \leq \eta^{\frac{3\tau}{2(l+v)}} \alpha^{\frac{l+1}{l+v}} C_H^{\frac{\tau}{2(l+v)}} \theta^{-1} L^{\frac{2l\tau}{v(l+v)}}, \quad \|G^* - \mathrm{id}\|_{L, \mathbb{D}_{\alpha}} < \frac{1}{2},
	\]  
	and  
	\[
	\sup_{\mathbb{D}_* \times \mathbb{T}^d} \max\left\{ |M(\phi_* - \mathrm{id})|, \, \|\pi_2(\partial_x \phi_* - \mathbb{I}_n)\| \right\} \leq 8\theta^{-2} (\log \rho^{-1})^{-2v} < 1,
	\]  
	where \( M := \mathrm{diag}\left(C_H(\alpha \sigma^v)^{-1} \mathbb{I}_n, \sigma^{-1} \mathbb{I}_n\right) \).  
	
	Then, the measure of the complement of \( \mathcal{K} \) is bounded by  
	\[
	\mathrm{meas}(\mathbb{D} \times \mathbb{T}^n \setminus \mathcal{K}) \leq (3\pi)^n \left(2\mathcal{H}^{n-1}(\partial \mathbb{D}) \tilde{\eta} + C \tilde{\eta}^2 + \mathrm{meas}(\mathbb{D}' \setminus \mathbb{D}_\alpha)\right),
	\]  
	with \[ \tilde{\eta} := \max\left\{ \eta^{\frac{3\tau}{2(l+v)}} \alpha^{\frac{l+1}{l+v}} C_H^{\frac{-\tau}{2(l+v)}} \theta^{-1} L^{\frac{2l\tau}{v(l+v)}}, \alpha_* \right\},
	C = 2 \sum_{j=1}^{\left\lfloor \frac{n-1}{2} \right\rfloor} \frac{\tilde{\eta}^{2j-1} {k}_{2j}(\mathbb{R}^{\partial \mathbb{D}})}{1 \cdot 3 \cdots (2j+1)},
	\]  
	where \( {k}_{2j}(\mathbb{R}^{\partial \mathbb{D}}) \) denotes the \( (2j) \)-th integrated mean curvature of \( \partial \mathbb{D} \).

	In summary, we have completed the proof of the persistence of KAM tori for stochastic Hamiltonian systems, in the sense of the most probable path. Finally, we investigate the limiting behavior of the stochastic Hamiltonian system as the noise intensity tends to zero. By Theorem~\ref{T5.1}, the nearly integrable stochastic Hamiltonian system satisfies a large deviation principle: as $\gamma \to 0$, the most probable path $\hat{\varphi}(t)=\big(\hat{\varphi}_\theta(t),\hat{\varphi}_I(t)\big)$ is given by the solution to the deterministic nearly integrable Hamiltonian system \eqref{1.3}. Moreover, the family of probability measures on the path space induced by $X^\gamma(\cdot)$ satisfies a large deviation principle with speed $\gamma^2$ and rate function $J(\cdot)$ defined by
	\begin{displaymath}
		\begin{aligned}
			J(\varphi)
			&=\frac{1}{2}\left(
			\int_0^1 \left| \sigma_{\theta}^{-1}(t)\left( \dot{\varphi}_{\theta}(t)
			-\omega(\varphi_{I}(t))
			-\frac{\partial P}{\partial I}(\varphi_{I}(t),\varphi_{\theta}(t)) \right)\right|^2 \,{\rm d}t \right.\\
			&\qquad\left.
			+ \int_0^1 \left| \sigma_{I}^{-1}(t)\left( \dot{\varphi}_{I}(t)
			+ \frac{\partial P}{\partial \theta}(\varphi_{I}(t),\varphi_{\theta}(t)) \right)\right|^2 \,{\rm d}t
			\right).
		\end{aligned}
	\end{displaymath}
	Here $\gamma$ denotes the intensity of the stochastic perturbation, and $\sigma_\theta^{-1}(t)$ and $\sigma_I^{-1}(t)$ are the inverses of the diffusion matrices $\sigma_\theta(t)$ and $\sigma_I(t)$, respectively.
\end{proof}

This result aligns with the conclusions of the deterministic KAM theory, further demonstrating that most invariant tori can survive under small perturbations. In a stochastic setting, these tori are preserved in a probabilistic sense, providing new theoretical insights into the stability of nearly integrable Hamiltonian systems under stochastic perturbations. Based on Theorem 6.1, we derive an intriguing corollary.

\begin{corollary}\label{C6.2}
	Under the assumptions of Theorem \ref{T6.1}, let $\mathcal{K}_0$ denote the family of invariant tori of the integrable Hamiltonian system \eqref{1.2}, let $\mathcal{K}$ denote the family of invariant tori of the nearly integrable Hamiltonian system \eqref{1.3}, and let $X^{\gamma}(t)$ be the solution to the randomly almost integrable Hamiltonian system \eqref{1.1}. By Theorem \ref{T6.1}, if the frequency vector satisfies the $(\alpha,\tau)$-Diophantine condition, then the ``most probable path'' of the stochastic system $X^{\gamma}(t)$ concentrates on the invariant tori $\mathcal{K}$; that is, in the most probable sense, the stochastic perturbation does not destroy the torus structure. Moreover, the probability that $X^{\gamma}(t)$ remains on the original family of invariant tori $\mathcal{K}_0$ satisfies
	\begin{displaymath}
		\mathbb{P}(X^{\gamma}(t) \in \mathcal{K}_0) \approx \exp \left\lbrace  -C\frac{\eta^2}{{\gamma}^2} \right\rbrace
		\times \mathbb{P}\left( \left\| W^{\sigma} \right\| \leq 1 \right),
	\end{displaymath}
	where $\|\cdot\|$ denotes the sup norm. This estimate clearly reveals the ``slight deformation'' phenomenon described in Theorem \ref{T6.1}. It not only quantifies the relative likelihood that the solution $X^{\gamma}(t)$ lies in the vicinity of $\mathcal{K}_0$ versus $\mathcal{K}$, but also shows from a probabilistic viewpoint that $\mathcal{K}$ constitutes the ``most probable'' continuation and geometric deformation of $\mathcal{K}_0$ under the combined effects of deterministic and stochastic perturbations.
\end{corollary}
\begin{proof}
	Based on the content of Theorem $\ref{T6.1}$, we have
	\begin{align*}
		&\quad \frac{\mathbb{P}(X^{\gamma}(t) \in \mathcal{K}_0) }{\mathbb{P}\left(  \left\| W^{\sigma} \right\| \leq 1\right) } \approx \exp\left\lbrace -\frac{1}{{\gamma}^2} \inf_{\varphi \in \mathcal{K}_0} J(\varphi)\right\rbrace\\
		& = \exp\left\lbrace  -\frac{1}{2 {\gamma}^2} \inf_{\varphi \in \mathcal{K}_0} \left( \int_0^1 \left| \sigma_\theta^{-1}(t) \left( \frac{\partial P}{\partial \varphi_I}(\varphi_\theta, \varphi_I)\right) \right|^2 \,{\rm d}t + \int_0^1 \left| \sigma_I^{-1}(t) \left( \frac{\partial P}{\partial \varphi_\theta}(\varphi_\theta, \varphi_I) \right) \right|^2 \,{\rm d}t \right) \right\rbrace\\
		& = \exp\left\lbrace -C\frac{\eta^2}{{\gamma}^2} \right\rbrace,
	\end{align*}
	where $C$ is a quantity that depends on $\sigma_\theta^{-1}$, $\sigma_I^{-1}$ and the partial derivatives of $H$.
\end{proof}

\subsection{A example}
\begin{example}
	To further illustrate the above theory, we introduce a two-dimensional stochastic oscillator equation as a concrete example. This system describes two coupled harmonic oscillators under both deterministic and stochastic perturbations. The stochastic oscillator equations are given as follows:
	\begin{equation}\label{30}
		\begin{aligned}
			\begin{cases}
				\mathrm{d}q_1(t) = p_1(t)\,\mathrm{d}t - \gamma p_2(t) \,\mathrm{d}t + \gamma (2 + \sin(t))\,\mathrm{d}W_1(t), \\
				\mathrm{d}p_1(t) = - 2 q_1(t)\,\mathrm{d}t + \gamma \sin(0.6 t)\,\mathrm{d}t + \gamma (2 + \cos(t))\,\mathrm{d}W_2(t), \\
				\mathrm{d}q_2(t) = p_2(t)\,\mathrm{d}t - \gamma p_1(t) \,\mathrm{d}t + \gamma (1 + 2\sin(t))\,\mathrm{d}W_3(t), \\
				\mathrm{d}p_2(t) = - q_2(t)\,\mathrm{d}t + \gamma \cos(0.6 t)\,\mathrm{d}t + \gamma (1 + 2\cos(t))\,\mathrm{d}W_4(t).
			\end{cases}
		\end{aligned}
	\end{equation}
	In this system, \( \gamma \) represents the perturbation coefficient, and \( W_1(t), W_2(t), W_3(t), W_4(t) \) are independent Wiener processes that introduce stochastic perturbations into the system. Here, \( q_1(t) \) and \( q_2(t) \) are the generalized coordinates, and \( p_1(t) \) and \( p_2(t) \) are the corresponding momenta. The presence of the stochastic terms makes the system non-deterministic, subject to random noise.
\end{example}

When the stochastic terms disappear, the system reduces to the following deterministic Hamiltonian system:
\begin{equation}\label{31}
	\begin{aligned}
		\begin{cases}
			\mathrm{d}q_1(t) = p_1(t)\,\mathrm{d}t - \gamma p_2(t) \,\mathrm{d}t, \\
			\mathrm{d}p_1(t) = - 2 q_1(t)\,\mathrm{d}t + \gamma \sin(0.6 t)\,\mathrm{d}t, \\
			\mathrm{d}q_2(t) = p_2(t)\,\mathrm{d}t - \gamma p_1(t) \,\mathrm{d}t, \\
			\mathrm{d}p_2(t) = - q_2(t)\,\mathrm{d}t + \gamma \cos(0.6 t)\,\mathrm{d}t.
		\end{cases}
	\end{aligned}
\end{equation}
This is a typical nearly integrable Hamiltonian system, where \( \gamma \) represents a small deterministic perturbation. Without stochastic perturbations, the system exhibits classic harmonic oscillatory behavior, with the relationship between the generalized coordinates and momenta governed by the Hamiltonian.

The Hamiltonian of this system can be written as:
\begin{displaymath}
	H(q_1, q_2, p_1, p_2, t) = \frac{p_1^2}{2} + \frac{p_2^2}{2} + q_1^2 + \frac{q_2^2}{2} - \gamma \left( q_1 \sin(0.6t) + q_2 \cos(0.6t) + p_1 p_2 \right),
\end{displaymath}
where \( q_1, q_2 \) are the generalized coordinates and \( p_1, p_2 \) are their corresponding momenta. The term \( \gamma q_1 p_2 \sin(t) \) represents the perturbation, which introduces coupling between the two oscillators and depends on time \( t \). This coupling alters the energy distribution within the system, leading to mutual influence between the two oscillators.

From the theorems discussed earlier, we know that the most probable continuous path of the stochastic nearly integrable Hamiltonian system \eqref{30} is governed by the deterministic equations in system \eqref{31}. As \( \gamma \to 0 \), the path of system \eqref{30} satisfies the large deviation principle, and we can quantify the probability distribution of the system's deviation from the most probable path using the rate function.

To gain a better understanding of the system's behavior under different perturbation strengths and to validate our theoretical results, we performed numerical simulations. Specifically, we considered three different perturbation strengths: \( \gamma = 0.001 \), \( \gamma = 0.01 \), and \( \gamma = 0.1 \). The results of these simulations are shown in the figures below.

\begin{figure}[htbp]
	\fontsize{10}{12}
	\centering
	\includegraphics[trim={1.8cm 2cm 1.5cm 3cm}, clip, width=12cm]{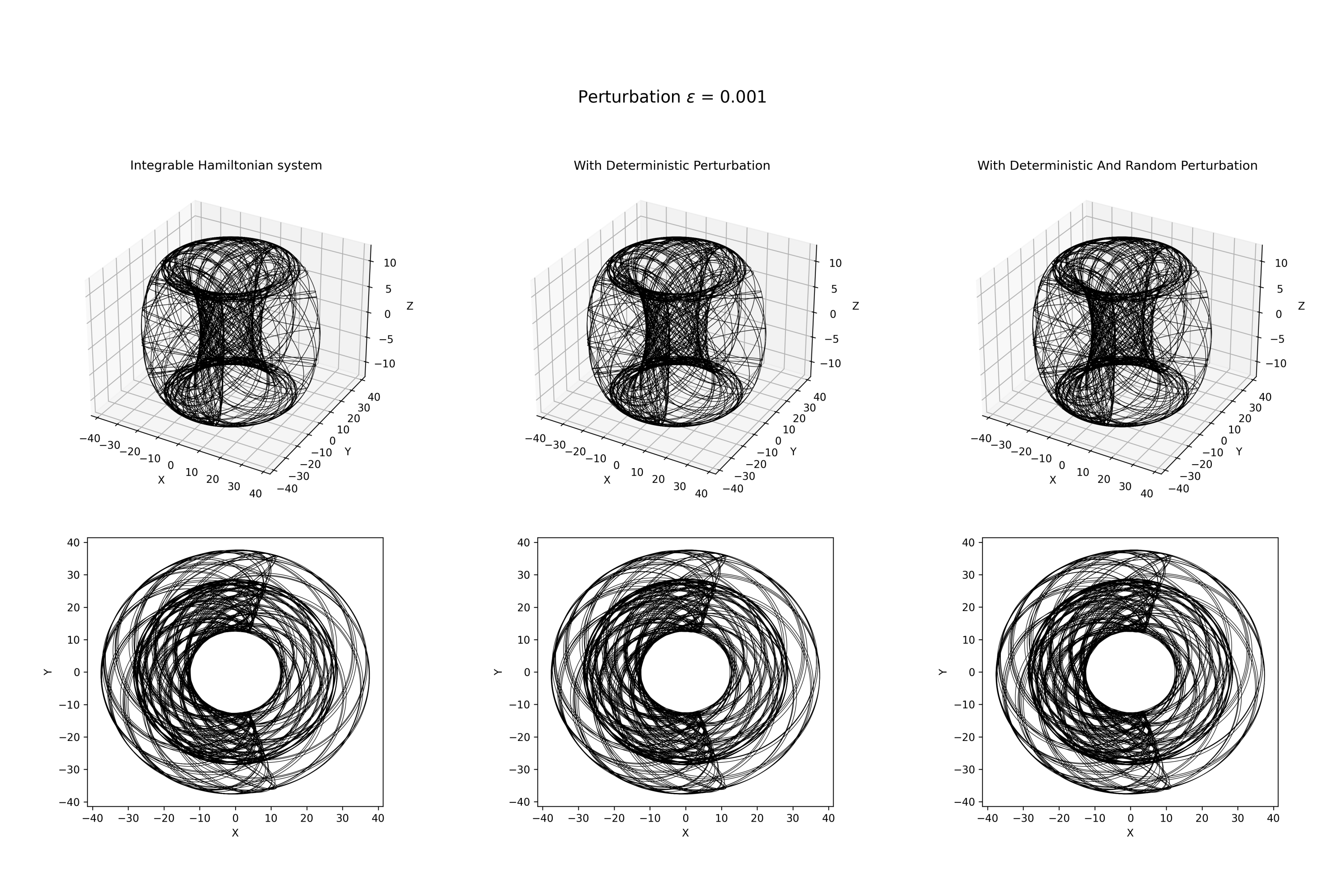}
	\caption{\selectfont
		The first row of three plots represents the phase space trajectories of the solutions to the stochastic nearly integrable Hamiltonian system $\eqref{30}$, the nearly integrable Hamiltonian system $\eqref{31}$, and the corresponding integrable Hamiltonian system, respectively, under a perturbation strength $ \gamma = 0.001 $. The second row of plots shows the corresponding projections of the trajectories from the first row onto the X-Y plane.}
	\label{F.3}
\end{figure}

At very small perturbations, the system closely resembles an integrable Hamiltonian system, see Fig $\ref{F.3}$. The invariant tori are well-preserved, and the trajectories in phase space exhibit regular, closed curves. Even with the introduction of deterministic and stochastic perturbations, the system's trajectory remains largely stable, with minimal impact from the stochastic terms.

\begin{figure}[htbp]
	\fontsize{10}{12}
	\centering
	\includegraphics[trim={1.8cm 0.8cm 1.5cm 3cm}, clip, width=12cm]{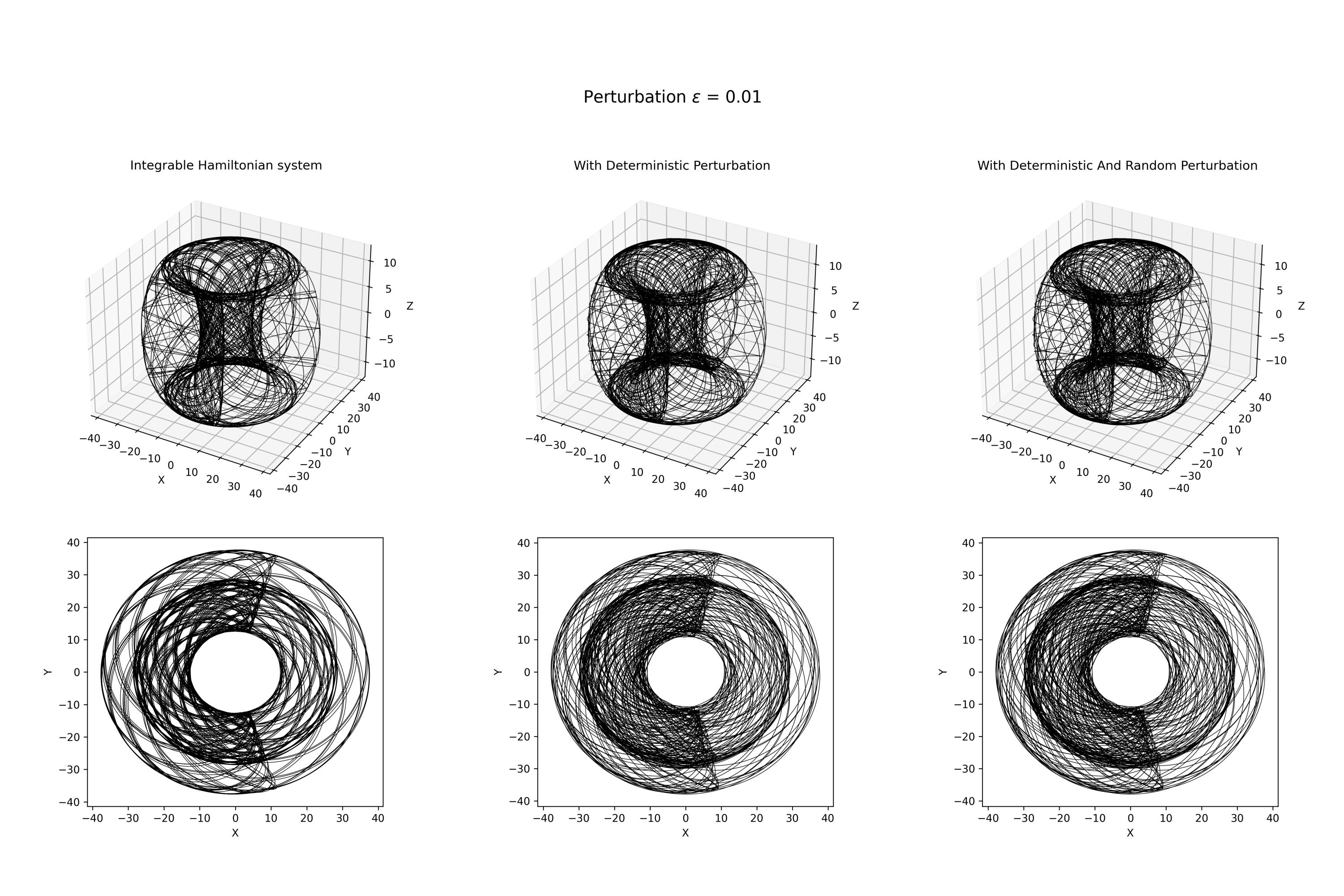}
	\includegraphics[trim={1.8cm 2cm 1.5cm 3cm}, clip, width=12cm]{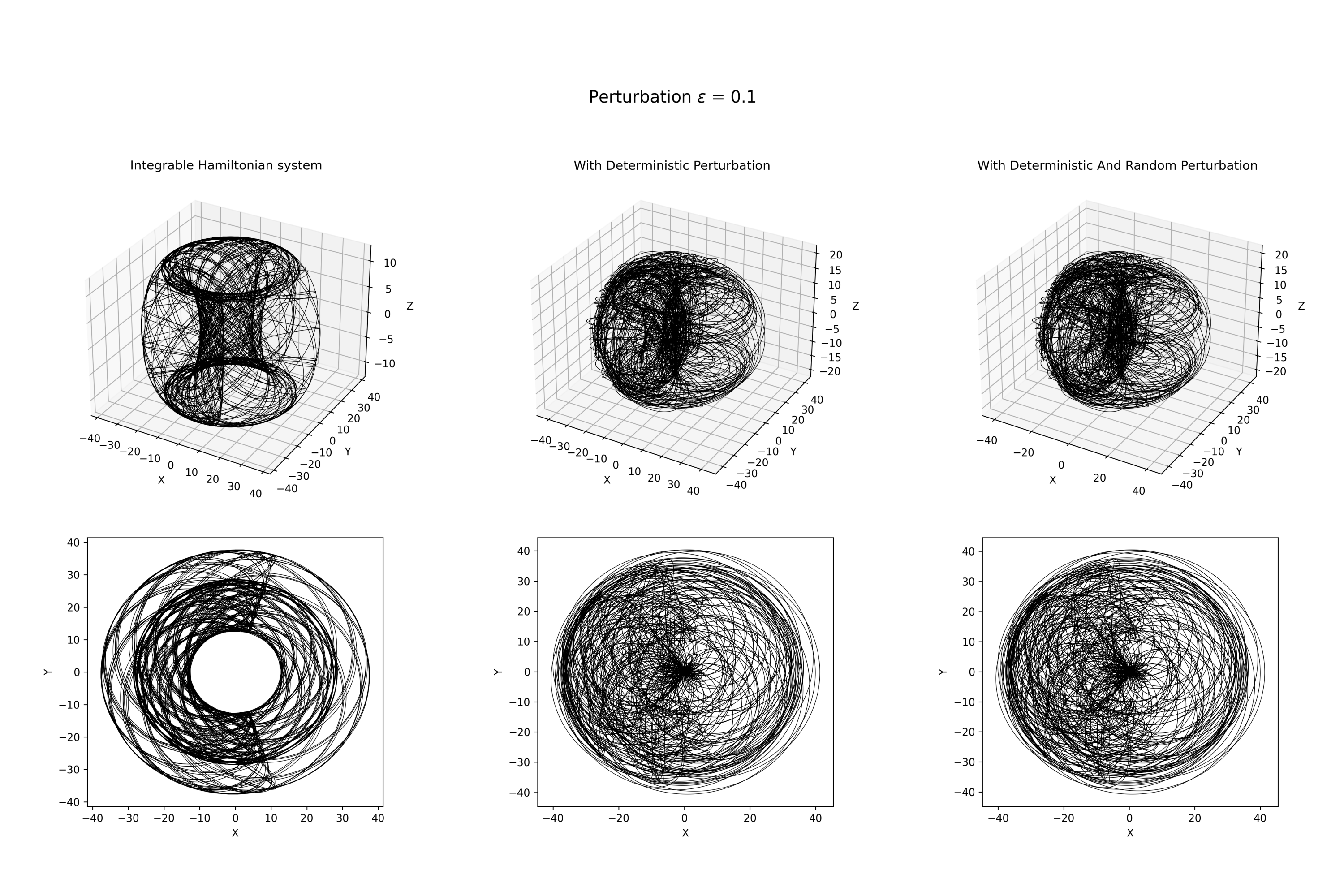}
	\caption{\selectfont The comparison figure of Fig $\ref{F.3}$ when $ \gamma = 0.01 $ and $ \gamma = 0.1 $, respectively.}
	\label{F.4}
\end{figure}

As the perturbation strength increases, the system's trajectories begin to change, as shown in Fig $\ref{F.4}$. While the invariant tori are still present in phase space, the combined effects of deterministic and stochastic perturbations lead to increased complexity in the trajectories. The stability of these trajectories gradually decreases, though they still exhibit quasi-periodic behavior. When the perturbation strength is further increased, the complexity grows significantly, with stochastic perturbations introducing more fluctuations that result in chaotic trajectories. This indicates that when the perturbations are strong enough, the overall topological structure of the system becomes disrupted, ultimately leading to the destruction of the invariant tori.

From the results of these numerical simulations, we can observe that under small stochastic perturbations, the system's trajectories generally evolve along deterministic paths, and the invariant tori are well-preserved. However, as the perturbation strength increases, the system's trajectories become progressively more complex. The fluctuations introduced by stochastic perturbations become more pronounced, especially when \( \gamma = 0.1 \), where the system exhibits stronger chaotic behavior.

Combining theoretical analysis with numerical simulation results, we can draw the conclusion that when the perturbation coefficient is small, the preservation of the invariant torus in the almost integrable stochastic Hamiltonian system can be guaranteed in a probabilistic sense. At this time, the trajectory of the system evolves along the solution of the deterministic Hamiltonian equation, and the probability of deviating from the most likely path decays exponentially. Although stochastic perturbations introduce additional complexity, the essential structure of the system can still be preserved in the sense of the most probable path. Moreover, provided that the perturbation is sufficiently small, the probability that the stochastic system preserves invariant tori is correspondingly high.

\newpage
\section*{acknowledgments} 
The reviewers' comments were extremely helpful. They identified and corrected a key issue in our manuscript, thereby improving its correctness and rigor. We are also grateful for the editor’s support, which gave us the opportunity to revise and strengthen the paper. We would like to express our sincere thanks.

\bibliographystyle{amsplain}
\bibliography{Ref}

\end{document}